\newtheorem{thm}{Theorem}[section]
\newtheorem{lemma}{Lemma}[section]
\newtheorem{prop}{Proposition}[section]
\newtheorem{rmk}{Remark}[section]
\theoremstyle{definition}
 \numberwithin{equation}{section}
\newcommand{\rr}{\mathbb{R}}
\newcommand{\al}{\alpha}
\newcommand{\de}{\delta}
 \newcommand{\eps}{\varepsilon}
\newcommand{\la}{\lambda}
\newcommand{\ga}{\gamma}
\newcommand{\calP}{\mathcal P}
\newcommand{\Nrx}{N_{\rho}^\xi}
\newcommand{\Rrx}{R_{\rho}^\xi}
\newcommand{\Wrx}{W_{\rho}^\xi}
\newcommand{\fr}{f_{\rho}}
\newcommand{\Lrx}{\mathcal L_\rho^\xi}
\newcommand{\Trx}{\mathcal T_\rho^\xi}
\renewcommand{\(}{\left(}
\renewcommand{\)}{\right)}
\renewcommand{\[}{\left[}
\renewcommand{\]}{\right]}
\newcommand{\<}{\left\langle}
\renewcommand{\>}{\right\rangle}
\begin{document}
\title[A Liouville equation with variable intensities]{Concentrating solutions for a Liouville type equation
with variable intensities in 2D-turbulence}
\author{Angela Pistoia}
\address[Angela Pistoia] {Dipartimento SBAI, Sapienza Universit\`{a} di Roma, 
Via Antonio Scarpa 16, 00161 Roma, Italy}
\email{pistoia@dmmm.uniroma1.it}
\author{Tonia Ricciardi}
\address[Tonia Ricciardi] {Dipartimento di Matematica e Applicazioni, Universit\`{a} di Napoli Federico II, 
Via Cintia, Monte S.~Angelo, 80126 Napoli, Italy}
\email{tonricci@unina.it}
\thanks{A.P.\ was partially supported by Progetto Sapienza Universit\`{a} di Roma
{\em Nonlinear PDE's: qualitative analysis of solutions.}
T.R.\ was partially supported by PRIN {\em Aspetti variazionali e perturbativi nei problemi differenziali nonlineari}}
\begin{abstract}
We construct sign-changing concentrating solutions for a mean field equation describing turbulent Euler flows with 
variable vortex intensities and arbitrary orientation. 
We study the effect of variable intensities and orientation on the bubbling profile
and on the location of the vortex points. 
\end{abstract}
\subjclass{35J60, 35B33, 35J25, 35J20, 35B40}
\date{May 14, 2015}
\keywords{mean field equation, blow-up solutions, turbulent Euler flow} 
\maketitle
%%%%%%%%%%%%%%%%%%%%%%%%%%%%%%%%%%%%%%%%%%%%%%%%%%%%%%%%%%%%%%%%%%%%%%%%%%%%%%%
%%%%%%%%%%%%%%%%%%%%%%%%%%%%%%%%%%%%%%%%%%%%%%%%%%%%%%%%%%%%%%%%%%%%%%%%%%%%%%%
\section{Introduction and main results}
\label{sec:intro}
%%%%%%%%%%%%%%%%%%%%%%%%%%%%%%%%%%%%%%%%%%%%%%%%%%%%%%%%%%%%%%%%%%%%%%%%%%%%%%%
%%%%%%%%%%%%%%%%%%%%%%%%%%%%%%%%%%%%%%%%%%%%%%%%%%%%%%%%%%%%%%%%%%%%%%%%%%%%%%%
Motivated by the mean field equation derived by C.~Neri \cite{Neri} in the context
of the statistical mechanics description of 2D-turbulence
within the framework developed by Onsager~\cite{On, EyinkSreenivasan}, Caglioti et al.~\cite{CLMP}, Kiessling~\cite{Kiessling},
we are interested in the existence and in the qualitative properties of solutions to the following problem:
\begin{equation}
\label{p}
\left\{
\begin{aligned}
-\Delta u=&\rho^2\(e^{u}-\tau e^{-\ga u}\)&&\hbox{in}\ \Omega ,\\
   u=&0
&&\hbox{on}\ \partial \Omega ,
\end{aligned}\right.
\end{equation}
where $\rho>0$ is a small constant, $\ga,\tau>0$, and $\Omega\subset\rr^2$ is a smooth bounded domain. 
\par
We recall that the mean field equation for
the $N$-point vortex system with random intensities derived in \cite{Neri} is given by:
\begin{equation}
\label{eq:Neri}
\left\{\begin{aligned}
-\Delta v =&\frac{\int_Ire^{-\beta rv}\,\mathcal P(dr)}{\iint_{I\times\Omega}e^{-\beta r'v}\,\mathcal P(dr') dx}&&\hbox{in}\ \Omega\\
   v =&0&&\hbox{on}\ \partial\Omega.
\end{aligned}\right.
\end{equation}
Here, $v$ is the stream function of a turbulent Euler flow,
$\calP$ is a Borel probability measure on a bounded interval $I$, normalized to $I=[-1,1]$,
describing the vortex intensity distribution, 
and $\beta\in\rr$ is a constant related to the inverse temperature. 
The mean field equation \eqref{eq:Neri} is derived from the classical Kirchhoff-Routh function 
for the $N$-point vortex
system (see, e.g., \cite{BaPi} and the references therein):
\begin{equation}
\label{def:HKR}
H_{KR}^N(r_1,\ldots,r_N, x_1,\ldots,x_N)
=\sum_{i\neq j}r_ir_jG(x_i,x_j)+\sum_{i=1}^Nr_i^2H(x_i,x_i),
\end{equation}
under the ``stochastic" assumption that the $r_i$'s are independent identically distributed
random variables with distribution $\mathcal P$.
Here, $G(x,y)$ denotes the Green's function for the Laplace operator on $\Omega$, namely
\begin{equation}
\label{def:G}
\left\{
\begin{aligned}
-\Delta G(\cdot,y)=&\delta_y&&\hbox{in\ }\Omega\\
G(\cdot,y)=&0&&\hbox{on\ }\partial\Omega
\end{aligned}
\right.
\end{equation}
and
\begin{equation}
\label{def:H}
H(x,y)=G(x,y)+\frac{1}{2\pi}\log|x-y|
\end{equation}
denotes the regular part of $G$.
Setting $u:=-\beta v$ and $\lambda=-\beta$, problem~\eqref{eq:Neri}
takes the form
\begin{equation}
\label{eq:Neribis}
\left\{\begin{aligned}
-\Delta u =&\lambda\frac{\int_Ire^{ru}\,\mathcal P(dr)}{\iint_{I\times\Omega}e^{r'u}\,\mathcal P(dr') dx}&&\hbox{in}\ \Omega\\
   u =&0&&\hbox{on}\ \partial\Omega.
\end{aligned}\right.
\end{equation}
We note that when $\mathcal P(dr)=\delta_1(dr)$ problem~\eqref{eq:Neribis}
reduces to the Liouville type problem
\begin{equation*}
\left\{\begin{aligned}
-\Delta u =&\lambda\frac{e^{u}}{\int_{\Omega}e^{u}\,dx}&&\hbox{in}\ \Omega\\
   u =&0&&\hbox{on}\ \partial\Omega,
\end{aligned}\right.
\end{equation*}
which has been extensively analyzed, see, e.g., \cite{Lin} and the references therein.
On the other hand, when $\mathcal P(dr)=(\delta_1(dr)+\delta_{-1}(dr))/2$, 
problem~\eqref{eq:Neribis} reduces to the sinh-Poisson type problem
\begin{equation}
\label{eq:sinh}
\left\{\begin{aligned}
-\Delta u =&\lambda\,\frac{e^{u}-e^{-u}}{\int_{\Omega}(e^{u}+e^{-u})\,dx}&&\hbox{in}\ \Omega\\
u =&0&&\hbox{on}\ \partial\Omega.
\end{aligned}\right.
\end{equation}
Sign-changing blow-up solutions to problem~\eqref{eq:sinh} were constructed in \cite{BP}.
\par
Here, motivated by the results in \cite{BP}, we are interested in 
identifying some qualitative properties of sign-changing
blowing up solutions to \eqref{eq:Neri} which are specifically related to
\textit{variable intensities and orientations}.
The key features of this situation are captured by taking
$\calP(dr)=\tau_1\de_{1}(dr)+\tau_2\de_{-\ga}(dr)$,
$\ga\in(0,1)$, $0\le\tau_1,\tau_2\le1$, $\tau_1+\tau_2=1$.
Then, problem~\eqref{eq:Neribis}
takes the form
\begin{equation}
\label{eq:Nerispecial}
\left\{\begin{aligned}
-\Delta u =&\lambda\frac{\tau_1e^{u}-\tau_2\ga e^{-\ga u}}{\int_{\Omega}(\tau_1e^{u}+\tau_2e^{-\ga u})dx}&&\hbox{in}\ \Omega\\
   u =&0&&\hbox{on}\ \partial\Omega.
\end{aligned}\right.
\end{equation}
Setting 
\begin{align}
\label{def:def:physconstants}
&\tau:=\frac{\tau_2\ga}{\tau_1},
&&\rho^2:=\frac{\la}{\int_\Omega(e^u+\frac{\tau}{\ga}e^{-\ga u})},
\end{align}
we are reduced to problem~\eqref{p}.
It may be checked that, along a blow-up sequence, we necessarily have $\int_\Omega{e^u}\,dx\to+\infty$,
see \cite{RZ}. Therefore, as far as blow-up solution sequences are concerned, problem~\eqref{eq:Nerispecial}
is equivalent to problem~\eqref{p} with $\rho\to0$.
\par
In this article, we are interested in constructing solution sequences $u=u_\rho$ having a positive 
blow-up point at $\xi_1\in\Omega$ and a negative blow-up point
at $\xi_2\in\Omega$, for some $\xi_1\neq\xi_2$. Moreover, we are interested in the qualitative properties of solutions as 
$\ga$ approaches its limit values.
\par
In order to state our results, let $\mathcal F_2\Omega$ denote the set of pairs of distinct points in $\Omega$,
namely
$$
\mathcal F_2\Omega:=\{(x,y)\in\Omega\times\Omega:x\neq y\}
$$
and let
$\mathrm{cat}(\mathcal F_2\Omega)$ denote the Ljusternik-Schnirelmann category of $\mathcal F_2\Omega$.
\par
We consider the ``Hamiltonian function" $\mathcal H_\gamma:\mathcal F_2\Omega\to\mathbb R$
defined by
\begin{equation}
\label{def:F}
\mathcal H_\gamma(\xi_1,\xi_2)=H(\xi_1,\xi_1)+\frac{H(\xi_2,\xi_2)}{\ga^2}
-\frac{2G(\xi_1,\xi_2)}{\ga}.
\end{equation}
Our first result concerns the existence of sign-changing solutions to \eqref{p}
which are approximately the difference of two Liouville bubbles.
\begin{thm}
\label{thm:main}
There exists $\rho_0>0$ such that for any $\rho\in(0,\rho_0)$
problem~\eqref{p} admits at least $\mathrm{cat}(\mathcal F_2\Omega)$ sign-changing solutions $u_\rho^i$,
$i=1,\ldots,\mathrm{cat}(\mathcal F_2\Omega)$, 
with the property
$$
u_\rho^i(x)\to8\pi G(x,\xi_{1}^i)-\frac{8\pi}{\ga}G(x,\xi_{2}^i)
$$ 
in $C^1_{\mathrm{loc}}\(\Omega\setminus\{\xi_1^i,\xi_2^i\}\)\cap W_0^{1,q}(\Omega)$ for all $q\in[1,2)$
for some critical point $(\xi_{1}^i,\xi_{2}^i)\in\mathcal F_2\Omega$ for $\mathcal H_\ga$.
Moreover,
\begin{enumerate}
  \item [(i)]
The solutions $u_\rho^i$ have the form:
\begin{align*}
u_\rho^i(x)=&\log\Big(\frac{1}{(\delta_{\rho,1}^i)^2+|x-\xi_{\rho,1}^i|^2}\Big)^2+8\pi H(x,\xi_{\rho,1}^i)\\
&\qquad-\frac{1}{\ga}\Big[\log\Big(\frac{1}{(\delta_{\rho,2}^i)^2+|x-\xi_{\rho,2}^i|^2})^2+8\pi H(x,\xi_{\rho,2}^i)\Big]+\phi_\rho^i+O(\rho^2),
\end{align*}
where $\|\phi_\rho^i\|\le C\rho^{2/p}$ for any $p>1$, 
the constants $\delta_{\rho,1}^i,\delta_{\rho,2}^i>0$ are given by
\begin{equation*}
\begin{aligned}
&(\de_{\rho,1}^i)^2=\frac{\rho^2}{8}\exp\left\{8\pi H(\xi_{\rho,1}^i,\xi_{\rho,1}^i)-\frac{8\pi}{\ga}G(\xi_{\rho,1}^i,\xi_{\rho,2}^i)\right\}\\
&(\de_{\rho,2}^i)^2=\frac{\rho^2\tau\ga}{8}\exp\left\{8\pi H(\xi_{\rho,2}^i,\xi_{\rho,2}^i)-8\pi\ga G(\xi_{\rho,1}^i,\xi_{\rho,2}^i)\right\}
\end{aligned}
\end{equation*}
with $(\xi_{\rho,1}^i,\xi_{\rho,2}^i)\in\mathcal F_2\Omega$ satisfying $(\xi_{\rho,1}^i,\xi_{\rho,2}^i)\to(\xi_{1}^i,\xi_{2}^i)$.
\item[(ii)]
The set $\Omega\setminus\{x\in\Omega: u_\rho^i(x)=0\}$ has exactly two connected components.
\item[(iii)]
If $\ga=1$, then \eqref{p} admits $\mathrm{cat}(\mathcal F_2\Omega/(x,y)\sim(y,x))$
\emph{pairs} of solutions $\pm u_\rho^i$ with the above properties. 
\end{enumerate}
\end{thm}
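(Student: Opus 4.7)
My plan is to implement a Lyapunov--Schmidt finite-dimensional reduction, adapted to the asymmetry $1$ versus $-\ga$ introduced by the variable intensities $\tone,\ttwo$. As a first step I fix $(\xi_1,\xi_2)\in\mathcal F_2\Omega$ in a compact subset and set up the ansatz
$$
U_{\xi_1,\xi_2}(x) = P\wdxu(x) - \tfrac{1}{\ga}\,P\wdxd(x),
$$
where $\wdxu(x)=\log(\delta_1^2+|x-\xi_1|^2)^{-2}$ is the standard Liouville bubble and $P\wdxu\in H_0^1(\Omega)$ is its $H_0^1$-projection, so that $P\wdxu(x)=\wdxu(x)-\log(8\delta_1^2)+8\pi H(x,\xi_1)+O(\delta_1^2)$. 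The concentration parameters $\delta_1,\delta_2$ are then forced to be exactly the ones in part~(i): they are chosen so that, once $U$ is substituted in the nonlinearity $\rho^2(e^U-\tau e^{-\ga U})$, the dominant exponentials $\rho^2 e^{P\wdxu}$ and $-\rho^2\tau e^{P\wdxd}$ integrate to $8\pi\delta_{\xi_1}$ and $-(8\pi/\ga)\delta_{\xi_2}$ respectively. The crossed exponentials $\rho^2 e^{-\ga P\wdxu}$ and $\rho^2 e^{-P\wdxd/\ga}$ are merely regular factors multiplied by $\rho^2$, and contribute at the $L^p$-level of order $\rho^{2/p}$ for every $p>1$.

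In the second step I solve the nonlinear projected problem. Writing $u=U_{\xi_1,\xi_2}+\phi$ and projecting out the six-dimensional kernel of the linearized Liouville operator at the two bubbles (the translation and dilation modes at each $\xi_i$), the resulting linear operator $\Lrx$ is invertible with norm $O(|\log\rho|)$ uniformly in $(\xi_1,\xi_2)$ on compact subsets of $\mathcal F_2\Omega$, thanks to the non-degeneracy of Liouville bubbles. A contraction-mapping argument in the weighted $L^p$/$W^{2,p}$ norms standard in the Liouville literature then produces, for all small $\rho$, a unique $\phi_\rho=\phi_\rho(\xi_1,\xi_2)$ in the orthogonal complement of the kernel, with $\|\phi_\rho\|=O(\rho^{2/p})$ and smoothly depending on the parameters.

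Third, I carry out the reduction. Problem \eqref{p} is solved if and only if $(\xi_1,\xi_2)$ is a critical point of the reduced functional $J_\rho(\xi_1,\xi_2):=\mathcal J_\rho(U_{\xi_1,\xi_2}+\phi_\rho)$, where $\mathcal J_\rho$ is the natural energy associated with \eqref{p}. Expanding $\mathcal J_\rho$ at $U$ and exploiting the prescribed $\delta_i$'s, the $(\xi_1,\xi_2)$-dependent leading part of $J_\rho$ takes the form
$$
J_\rho(\xi_1,\xi_2) = a(\rho) + b(\rho)\,\mathcal H_\ga(\xi_1,\xi_2) + o\bigl(b(\rho)\bigr)
$$
uniformly on compact subsets of $\mathcal F_2\Omega$, with $b(\rho)\neq0$. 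Since $\mathcal H_\ga\to-\infty$ on $\partial\mathcal F_2\Omega$ (diagonal or boundary of $\Omega$), the $C^0$-stable critical points of $\mathcal H_\ga$ persist as critical points of $J_\rho$, and a standard Ljusternik--Schnirelmann category argument on the sublevel sets of $-\mathcal H_\ga$ produces at least $\mathrm{cat}(\mathcal F_2\Omega)$ of them, proving the existence part and~(i). Part~(ii) on the two nodal components follows from the uniform $C^1_{\mathrm{loc}}$ convergence on $\Omega\setminus\{\xi_1,\xi_2\}$ to $8\pi G(\cdot,\xi_1)-(8\pi/\ga)G(\cdot,\xi_2)$, which has a single sign on small disks around each $\xi_i$, combined with the fact that $\phi_\rho$ is a small perturbation. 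For~(iii), when $\ga=1$ the equation becomes odd, the ansatz and all estimates are equivariant under the involution $(\xi_1,\xi_2)\leftrightarrow(\xi_2,\xi_1)$ matched with $u\mapsto-u$, and Ljusternik--Schnirelmann theory on the quotient $\mathcal F_2\Omega/\sim$ yields $\mathrm{cat}(\mathcal F_2\Omega/\sim)$ distinct pairs $\pm u_\rho^i$.

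I expect the main obstacle to be the energy expansion in the third step: one must simultaneously track three different small scales ($\delta_1$, $\delta_2$, $\rho$) and handle the crossed exponentials $e^{-\ga P\wdxu}$ and $e^{-P\wdxd/\ga}$, which are a genuinely new feature of the variable-intensity model absent in the symmetric sinh-Poisson case. Verifying that these crossed contributions are strictly lower-order than the Hamiltonian term, and that the dependence of $b(\rho)$ on $\rho$ is exactly of one scale so that $J_\rho$ is a true $C^0$-perturbation of $b(\rho)\mathcal H_\ga$ on compact sets, is the computational heart of the argument.
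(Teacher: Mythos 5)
Your proposal follows essentially the same Lyapunov--Schmidt scheme as the paper: the same ansatz $W_\rho^\xi = Pw_1 - \gamma^{-1}Pw_2$ with the same forced choice of $\delta_1,\delta_2$, an $L^p$-based contraction in the orthogonal complement of the approximate kernel, a reduction to a finite-dimensional functional whose leading geometric part is $\mathcal H_\gamma$, and Ljusternik--Schnirelmann theory on $\mathcal F_2\Omega$, using that $\mathcal H_\gamma\to-\infty$ on $\partial\mathcal F_2\Omega$. A small imprecision worth noting: the coefficient of $\mathcal H_\gamma$ in the reduced energy is a $\rho$-independent constant, $\widetilde J_\rho = a(\rho) - \tfrac{(8\pi)^2}{2}\mathcal H_\gamma + o(1)$, and what the persistence of critical points actually requires is this convergence to hold $C^1$-uniformly on compact subsets, not merely $C^0$; establishing the $C^1$ closeness hinges on the invertibility estimate for the linearized operator and is part of the work, not a freebie.

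The genuine gap is in your treatment of part (ii). You claim the nodal set has exactly two components because $u_\rho\to u_0 = 8\pi G(\cdot,\xi_1)-(8\pi/\gamma)G(\cdot,\xi_2)$ in $C^1_{\mathrm{loc}}$, $u_0$ has a definite sign near each $\xi_i$, and $\phi_\rho$ is small. This only yields \emph{at least} two components; it does not exclude a third. The sign structure of $u_0$ away from the singularities is not controlled a priori (its nodal set is that of a nontrivial combination of Green's functions), and more importantly $C^1_{\mathrm{loc}}(\Omega\setminus\{\xi_1,\xi_2\})$ convergence gives you no control on the nodal pattern of $u_\rho$ in a neighborhood of $\partial\Omega$, where both $u_\rho$ and $u_0$ vanish and spurious small nodal domains could in principle appear. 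The paper rules out extra components by a quantitative contradiction argument: if a third component $\omega_\rho$ existed containing a fixed ball away from both bubbles, one writes the equation on $\omega_\rho$ as $-\Delta u_\rho = a_\rho u_\rho + \rho^2(1-\tau)$ with $u_\rho\in H_0^1(\omega_\rho)$ and $\|a_\rho\|_{L^\infty(\omega_\rho)} = O(\rho^2)$, tests against $u_\rho$, and uses $\lambda_1(\omega_\rho)\ge\lambda_1(\Omega)>0$ to force $\int_{\omega_\rho}|\nabla u_\rho|^2 = O(\rho^4)$, contradicting $u_\rho\to u_0\not\equiv0$ on that fixed ball. Some argument of this type is needed; soft convergence alone will not do.
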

It is not difficult to check (see Lemma~\ref{lem:masses} below) that the solutions $u_\rho^i$ to \eqref{p} 
obtained in Theorem~\ref{thm:main} satisfy
\begin{align}
\label{eq:limitmasses}
&\rho^2\int_\Omega e^{u_\rho^i}\,dx\to 8\pi,
&&\tau\rho^2\int_\Omega e^{-\ga u_\rho^i}\,dx\to \frac{8\pi}{\ga}
\end{align}
as $\rho\to0$, and therefore $u_\rho^i$ yields a solution to \eqref{eq:Nerispecial}
satisfying
$$
\lambda=\rho^2\int_{\Omega}(e^{u_\rho^i}+\frac{\tau}{\ga}e^{-\ga u_\rho^i})\,dx\to8\pi\Big(1+\frac{1}{\ga^2}\Big).
$$
We note that the blow-up mass values obtained in \eqref{eq:limitmasses} are completely determined
by \eqref{p}, see the blow-up analysis contained in Proposition~\ref{prop:blowupanalysis} 
in the Appendix.
\par
We also note that, up to relabelling $(\xi_1,\xi_2)$, the function $\mathcal H_\ga$
defined in \eqref{def:F}
coincides with the Kirchhoff-Routh Hamiltonian \eqref{def:HKR},
as expected.
\par
Our second result, which actually contains the more innovative part of this article, 
is concerned with the asymptotic location of the blow-up points,
in the special case where $\Omega$ is a {\em convex} domain. Roughly speaking, letting $\ga\to+\infty$, the 
``positive bubble" approaches the (unique) maximum point of the Robin's function $H(\xi,\xi)$, whereas the
``negative bubble"
escapes to the boundary $\partial\Omega$, and more precisely
to a point minimizing $\partial_\nu G(x_0,y),$ $y\in\partial\Omega$.
Here $\nu$ denotes the outward normal at the point $y\in\partial\Omega$.
The ``opposite" asymptotic behavior occurs when $\ga\to0^+$.
\begin{thm}
\label{thm:bubbleescape}
Let $\Omega\subset\mathbb R^2$ be a convex bounded domain. 
For every fixed $\ga>0$,
let $u_\rho^\ga$ be a solution sequence to \eqref{p} 
concentrating at $(\xi_{\rho,1}^\ga,\xi_{\rho_2}^\ga)\to(\xi_1^\ga,\xi_2^\ga)$,
as constructed in Theorem~\ref{thm:main}.
We have:
\begin{enumerate}
  \item [(i)]
As $\ga\to+\infty$, we have $\xi_1^\ga\to x_0\in\Omega$, where $x_0$ is the (unique)
maximum point of the Robin function $H(\xi,\xi)$; furthermore,
$\xi_2^\ga\to y_0\in\partial\Omega$, 
where $y_0$ is a minimum point of the function $\partial_\nu G(x_0,y),$ $y\in\partial\Omega$.
\item[(ii)]
Conversely, as $\ga\to0^+$, we have $\xi_1^\ga\to y_0\in\partial\Omega$, 
$\xi_2^\ga\to x_0\in\Omega$, where $x_0,y_0$ are as in part~\textit{(i)}.
\end{enumerate}
\end{thm}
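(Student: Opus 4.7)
The plan is to carry out a careful asymptotic analysis, as $\gamma\to+\infty$, of the critical-point system $\nabla\mathcal H_\gamma(\xi_1^\gamma,\xi_2^\gamma)=0$ that the concentration points from Theorem~\ref{thm:main} satisfy. Two classical facts for convex $\Omega\subset\mathbb R^2$ will be exploited throughout: (a) the Robin function $\xi\mapsto H(\xi,\xi)$ has a unique, nondegenerate critical point $x_0\in\Omega$, which is its global maximum (Caffarelli--Friedman); and (b) for every $x\in\Omega$, the Green function $G(x,\cdot)$ has no interior critical points in $\Omega\setminus\{x\}$. Written out, the critical equations for $\mathcal H_\gamma$ read
\begin{equation*}
\nabla_1 H(\xi_1,\xi_1)=\frac{1}{\gamma}\,\nabla_1 G(\xi_1,\xi_2),\qquad
\nabla_1 H(\xi_2,\xi_2)=\gamma\,\nabla_2 G(\xi_1,\xi_2).
\end{equation*}

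The first step is to pin down the qualitative picture: $\xi_1^\gamma$ remains in a compact subset of $\Omega$, whereas $\xi_2^\gamma\to\partial\Omega$. Compactness follows from boundedness of the min--max critical value together with $H(\xi,\xi)\to-\infty$ near $\partial\Omega$. The escape of $\xi_2^\gamma$ is argued by contradiction: a limit $\xi_2^*\in\Omega$ would, via the second equation, force $\nabla_2 G(x_0,\xi_2^*)=0$, contradicting (b). Once $\xi_2^\gamma\to\partial\Omega$ is established, $\nabla_1 G(\xi_1^\gamma,\xi_2^\gamma)\to 0$ (since $G$, and hence $\nabla_1 G$, vanish when the second argument lies on $\partial\Omega$), so the first equation yields $\nabla_1 H(\xi_1^\gamma,\xi_1^\gamma)\to 0$, and fact~(a) forces $\xi_1^\gamma\to x_0$.

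The next step is a refined boundary analysis of $\xi_2^\gamma$. Writing $\xi_2^\gamma=y_\gamma-s_\gamma\nu(y_\gamma)$ with $y_\gamma\in\partial\Omega$ and $s_\gamma=\mathrm{dist}(\xi_2^\gamma,\partial\Omega)$, the conformal-map expansions near the boundary give
\begin{equation*}
H(\xi_2^\gamma,\xi_2^\gamma)=\tfrac{1}{2\pi}\log(2s_\gamma)+\tfrac{\log 2}{2\pi}+O(s_\gamma),\qquad
G(\xi_1^\gamma,\xi_2^\gamma)=-s_\gamma\,\partial_\nu G(\xi_1^\gamma,y_\gamma)+O(s_\gamma^2),
\end{equation*}
where a cancellation involving the conformal factor ensures that the $O(1)$ term in $H(\xi_2,\xi_2)$ is independent of $y_\gamma$ at leading order. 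Inserting these into the normal component of the second critical equation fixes the rate $s_\gamma\sim-\bigl(4\pi\gamma\,\partial_\nu G(x_0,y_\gamma)\bigr)^{-1}$, so $\gamma s_\gamma$ is bounded and bounded away from zero; the tangential component then reduces, at leading order, to $\partial_\tau\bigl(-\partial_\nu G(x_0,y_\gamma)\bigr)=o(1)$, so any limit $y_0$ of $y_\gamma$ is a critical point of the Poisson kernel $-\partial_\nu G(x_0,\cdot)$ on $\partial\Omega$.

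The main obstacle is the last step, identifying $y_0$ as a \emph{minimum} of $\partial_\nu G(x_0,\cdot)$. My plan is to exploit the min--max variational characterization underlying the Ljusternik--Schnirelmann category construction in Theorem~\ref{thm:main}: along the one-parameter family $y\mapsto(x_0,y-s^*(y)\nu(y))$ with $s^*(y)$ the normal-direction optimizer, the reduced functional equals, modulo a $y$-independent constant, $-\log\bigl(-\partial_\nu G(x_0,y)\bigr)/(2\pi\gamma^2)$, and the min--max level associated to the Ljusternik--Schnirelmann solution singles out the minimum of $\partial_\nu G(x_0,\cdot)$ on $\partial\Omega$. Finally, part~(ii) follows from part~(i) by the involutive substitution $u\mapsto-u/\gamma$ in \eqref{p} (together with the corresponding rescaling of $\tau$ and $\rho$), which transforms the problem into the same form with $\gamma$ replaced by $1/\gamma$ and the roles of $\xi_1$ and $\xi_2$ swapped.
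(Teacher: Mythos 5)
Your overall strategy -- analyzing the critical-point system for $\mathcal H_\gamma$ and tracking the boundary behaviour -- is the same one the paper uses, and your reduction to the two equations $\gamma\nabla h(\xi_1)=\nabla_1 G(\xi_1,\xi_2)$ and $\gamma^{-1}\nabla h(\xi_2)=\gamma\nabla_2 G(\xi_1,\xi_2)$ is exactly the starting point of Proposition~\ref{prop:Hamiltonian}. Within that common framework, however, several of your steps diverge from the paper's, and two of them have real gaps.

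First, your argument that $\xi_2^\gamma\to\partial\Omega$ is incomplete. You argue that a limit $\xi_2^*\in\Omega$ would force $\nabla_2 G(x_0,\xi_2^*)=0$, contradicting the fact that $G(x_0,\cdot)$ has no interior critical points. But this only covers the case $\xi_2^*\neq x_0$; the degenerate possibility that $\xi_2^\gamma$ and $\xi_1^\gamma$ collapse to the \emph{same} limit is not addressed, and it requires a separate argument (the singular blow-up of $\nabla_2 G$ then contradicts boundedness of $\gamma^{-2}\nabla h$). The paper treats precisely this as ``Claim~1'' and further carries out a delicate dichotomy analysis (with $|x_n-y_n|/d_{x_n}$ and $d_{y_n}/d_{x_n}$, using the half-plane expansions of Lemma~\ref{lem:Ghexpansions}) for the case $x_0=y_0\in\partial\Omega$, which your sketch omits entirely. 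Your appeal to ``boundedness of the min-max critical value'' for compactness of $\xi_1^\gamma$ is also unquantified (the levels vary with $\gamma$), whereas the paper establishes $x_0\in\Omega$, $y_0\in\partial\Omega$ purely from the critical-point equations without invoking the variational level.

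Second, and more importantly, your treatment of the final step -- showing $y_0$ is a \emph{minimum} of $\partial_\nu G(x_0,\cdot)$ rather than merely a critical point -- is essentially missing. Your tangential-component calculation only yields that $y_0$ is a critical point of the Poisson kernel. You then assert that ``the min-max level associated to the Ljusternik-Schnirelmann solution singles out the minimum,'' but this is neither proved nor clearly true: the LS construction produces $\mathrm{cat}(\mathcal F_2\Omega)$ critical points at distinct levels and there is no a priori reason the one under consideration corresponds to a global extremum of the restricted functional $y\mapsto -\log(-\partial_\nu G(x_0,y))$. The paper avoids this issue by a direct comparison: it takes $(x_n,y_n)$ to be the maximum point of $\mathcal H_{\gamma_n}$ and compares $\mathcal H_{\gamma_n}(x_n,y_n)\ge\mathcal H_{\gamma_n}(x_n,p-d_{y_n}\nu(p))$ for arbitrary $p\in\partial\Omega$, which after the Bandle--Flucher expansion of $h$ and the mean value theorem for $G$ yields $\partial_\nu G(x_0,y_0)\le\partial_\nu G(x_0,p)$ directly. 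This is the mechanism your proposal needs but does not supply.

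Finally, your reduction of part~(ii) to part~(i) via the substitution $u\mapsto -u/\gamma$ does not give back problem~\eqref{p}: with $v=-u/\gamma$ the right-hand side acquires exponentials $e^{\gamma^2 v}$ and $e^{-\gamma v}$, which is not of the required form. The correct PDE-level substitution would be $w=-\gamma u$, and even then one must track the rescaling of $\rho$ and $\tau$. The paper's route is much simpler and bypasses the PDE entirely: the Hamiltonian satisfies $\mathcal H_\gamma(\xi_1,\xi_2)=\gamma^{-2}\mathcal H_{1/\gamma}(\xi_2,\xi_1)$, so critical points with $\gamma\to 0^+$ are precisely critical points with $\gamma\to+\infty$ after swapping $\xi_1\leftrightarrow\xi_2$, and Proposition~\ref{prop:Hamiltonian} applies verbatim. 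You should replace your PDE symmetry argument with this Hamiltonian symmetry.
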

We observe that our method is readily adapted
to yield the existence of one-bubble solutions for the problem:
\begin{equation}
\label{pb:Baraket}
\left\{
\begin{aligned}
-\Delta u=&\rho^2\(e^{u}+\tau e^{\pm\ga u}\)&&\hbox{in}\ \Omega ,\\
u=&0
&&\hbox{on}\ \partial \Omega ,
\end{aligned}\right.
\end{equation}
where $\ga,\tau$ are as above, $\ga\neq1$.
Problem~\eqref{pb:Baraket} was considered in \cite{Baraket2}
(with $\tau=1$ and $\ga\in(0,1)$) in the context of combustion,
where bubbling solutions were constructed by a delicate perturbative method
on the line of \cite{BarPac}. 
\par
Finally, we note that solutions to \eqref{p} also yield solutions to the following related
mean field equation derived in \cite{On}, see also \cite{SawadaSuzuki},
under a ``deterministic" assumption on the vortex intensity distribution:
\begin{equation}
\label{eq:Onsager}
\left\{
\begin{aligned}
-\Delta u=&\lambda\int_I\frac{re^{ru}}{\int_\Omega e^{ru}\,dx}\,\mathcal P(dr)&&\hbox{in\ }\Omega\\
u=&0&&\hbox{on\ }\partial\Omega,
\end{aligned}
\right.
\end{equation}
provided $\mathcal P(dr)=\delta_1(dr)+\mathcal P'$, $\mathrm{supp}\mathcal P'\subset[-r_0,r_0]$ for a suitably small $r_0>0$,
see \cite{ORS,RS,RTZZ}. 
\par
This paper is organized as follows. In Section~\ref{sec:ansatz} we introduce the
notation necessary to the $L^p$-setting of problem~\eqref{p} and we state the Ansatz for 
the sign-changing solutions, following \cite{EGP}.
In Section~\ref{sec:finitedim} we reduce problem~\eqref{p} to a finite dimensional problem on $\mathcal F_2\Omega$.
The equivalent finite dimensional problem is solved in Section~\ref{sec:reducedpb},
thus completing the proof of Theorem~\ref{thm:main}.
In Section~\ref{sec:bubbleescape} we prove Theorem~\ref{thm:bubbleescape}.
The Appendix contains a blow-up analysis for \eqref{p} as well as some technical estimates.
%%%%%%%%%%%%%%%%%%%%%%%%%%%%%%%%%%%%%%%%%%%%%%%%%%%%%%%%%%%%%%%%%%%%%%%%%%%%%%%%%%%%%%%%%%%%%%%%%%%%%%%%%%
%%%%%%%%%%%%%%%%%%%%%%%%%%%%%%%%%%%%%%%%%%%%%%%%%%%%%%%%%%%%%%%%%%%%%%%%%%%%%%%%%%%%%%%%%%%%%%%%%%%%%%%%%%
\section{Ansatz and $L^p-$setting of the problem} 
\label{sec:ansatz}
%%%%%%%%%%%%%%%%%%%%%%%%%%%%%%%%%%%%%%%%%%%%%%%%%%%%%%%%%%%%%%%%%%%%%%%%%%%%%%%%%%%%%%%%%%%%%%%%%%%%%%%%%%
%%%%%%%%%%%%%%%%%%%%%%%%%%%%%%%%%%%%%%%%%%%%%%%%%%%%%%%%%%%%%%%%%%%%%%%%%%%%%%%%%%%%%%%%%%%%%%%%%%%%%%%%%%
Our aim in this section is to
formulate problem \eqref{p} in a more convenient Sobolev space setting,
namely system~\eqref{equ1}--\eqref{equ2} below.
To this end, we first introduce some notation and we recall
some known results.
\par
Henceforth,
$\|u\|_p:=\(\int\limits_{\Omega } |u(x)|^p\,dx\)^{1/p}$
denotes the usual norm  in the Banach space  $L^p(\Omega )$,
$\<u,v\> := \int\limits_{\Omega }\nabla u(x)\cdot\nabla v(x)\,dx $ denotes the usual scalar product in $\mathrm{H}^1_0(\Omega )$
and $\|u\|$  denotes its induced norm on $\mathrm{H}^1_0(\Omega )$.
For any $p>1,$ we denote by $i_p:H^1_0(\Omega)\hookrightarrow L^{p/(p-1)}(\Omega)$
the Sobolev embedding and by $i^*_p:L^{p}(\Omega)\to H^1_0(\Omega)$  the adjoint operator of $i_p$.
That is,  $u=i^*_p(v)$ if and only if  $u\in\mathrm{H}^1_0(\Omega )$ is a weak solution
of $-\Delta u=v$ in $\Omega$.
We point out that $i^*_p$ is a continuous mapping, namely
\begin{equation}
\label{isp}
\|i^*_p(v)\|_{ H^1_0(\Omega)}\le c_p \|v\|_{ L^{p}(\Omega)}, \ \hbox{for any} \ v\in L^{p}(\Omega),
\end{equation}
for some constant $c_p$ which depends on $\Omega$ and $p$.
We define $i^*:\cup_{p>1}L^p(\Omega)\to H_0^1(\Omega)$ by setting
$i^*\vert_{L^p(\Omega)}=i_p^*$ for any $p>1$.
\par
We shall repeatedly use the following well-known inequality \cite{Moe,Tru}.
\begin{lemma}[Moser-Trudinger inequality]
\label{tmt} 
There exists $c>0$ such that for any   bounded domain $\Omega$ in $\rr^2$
there holds
$$
\int\limits_\Omega e^{4\pi u^2/\|u\|^2}dx\le c|\Omega|,
$$
for all $u\in{H}^1_0(\Omega)$.
In particular,  there exists $c>0$ such that for any $q\ge1$
\begin{equation}
\label{ineq:MT}
\| e^{u}\|_{L^q(\Omega)}\le c|\Omega|\exp\left\{\frac{q}{16\pi}\|u\|^2\right\},\qquad
\end{equation}
for all $u\in{H}^1_0(\Omega)$.
\end{lemma}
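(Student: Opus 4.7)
The plan is to proceed in two stages: first establish the exponential-square bound, then deduce the $L^q$ estimate for $e^u$ by a simple Young-type inequality. For the first inequality, I would follow Moser's original argument, which reduces a two-dimensional PDE-flavored estimate to a one-variable calculus problem via radial symmetrization.

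For the exponential-square inequality, first I would note that by density we may assume $u\in C_c^\infty(\Omega)$. Then I would invoke the Schwarz symmetrization: replacing $u$ by its symmetric decreasing rearrangement $u^*$ on the ball $B_R$ with $|B_R|=|\Omega|$ preserves the $L^2$ norm of $\nabla u$ (decreases it, actually, by the P\'olya--Szeg\H{o} inequality) and preserves the distribution of $u$, hence leaves the left-hand side unchanged. So it suffices to prove the inequality for radial, radially decreasing $u$ on a disk. Next I would perform Moser's logarithmic change of variables $r=Re^{-t/2}$ and set $w(t):=\sqrt{4\pi}\,u(r)$; then a direct computation shows
\begin{equation*}
\int_0^\infty (w'(t))^2\,dt = \|\nabla u\|_2^2/\|\nabla u\|_2^2 = 1
\end{equation*}
after normalization $\|\nabla u\|_2=1$, while $w(0)=0$ and the integral becomes
\begin{equation*}
\int_\Omega e^{4\pi u^2/\|\nabla u\|_2^2}\,dx = \pi R^2\int_0^\infty e^{w(t)^2-t}\,dt.
\end{equation*}
The problem is thereby reduced to showing that $\int_0^\infty e^{w(t)^2-t}\,dt\le C$ uniformly for all absolutely continuous $w$ with $w(0)=0$ and $\int_0^\infty w'{}^2\le 1$. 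This last step is the core difficulty: one splits the integral according as $w(t)^2\le t$ or $w(t)^2>t$, uses the Cauchy--Schwarz bound $w(t)^2\le t$ on the latter set (which actually fails to be sharp, forcing a more careful analysis), and exploits that $w^2-t$ stays bounded on a carefully chosen exceptional set of finite measure. This calculus estimate, going back to \cite{Moe}, is the main obstacle and requires the sharp constant $4\pi$; any larger constant makes the one-variable integral diverge.

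Having established the first inequality, the second follows immediately by an elementary AM--GM style bound. Writing
\begin{equation*}
qu = 2\cdot\Bigl(\frac{2\sqrt{\pi}\,u}{\|u\|}\Bigr)\cdot\Bigl(\frac{q\|u\|}{4\sqrt{\pi}}\Bigr)\le \frac{4\pi u^2}{\|u\|^2}+\frac{q^2\|u\|^2}{16\pi},
\end{equation*}
exponentiating and integrating against $dx$ gives
\begin{equation*}
\int_\Omega e^{qu}\,dx\le \exp\!\Bigl(\frac{q^2\|u\|^2}{16\pi}\Bigr)\int_\Omega e^{4\pi u^2/\|u\|^2}\,dx\le c|\Omega|\exp\!\Bigl(\frac{q^2\|u\|^2}{16\pi}\Bigr),
\end{equation*}
and extracting the $q$-th root (absorbing the $|\Omega|^{1/q}$ factor into the constant for bounded $\Omega$) yields \eqref{ineq:MT}. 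Since this lemma is classical and the paper cites \cite{Moe,Tru}, I would not reproduce the one-dimensional calculus lemma in full; rather I would quote it and focus the exposition on the symmetrization reduction and the explicit passage to the $L^q$ form, which is the piece actually used in subsequent sections.
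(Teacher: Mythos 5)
The paper does not prove this lemma at all: it is stated as a classical fact with citations to \cite{Moe} and \cite{Tru}, so there is no in-paper argument to compare against. Your outline reproduces the canonical Moser proof of the first inequality (reduction to radial decreasing functions via Schwarz symmetrization and P\'olya--Szeg\H{o}, the logarithmic change of variables $r=Re^{-t/2}$, $w=\sqrt{4\pi}\,u$, and the one-dimensional sharp estimate $\int_0^\infty e^{w^2-t}\,dt\le C$ for $w(0)=0$, $\int_0^\infty w'^2\le1$), and the deduction of \eqref{ineq:MT} by the elementary bound
$$
qu\le \frac{4\pi u^2}{\|u\|^2}+\frac{q^2\|u\|^2}{16\pi},
$$
which is exactly the standard way to pass from the exponential-square inequality to $L^q$ control on $e^u$. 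The change of variables computations check out: $\int_0^\infty w'^2\,dt=\|\nabla u\|_2^2$ and $\int_\Omega e^{4\pi u^2/\|\nabla u\|_2^2}dx=\pi R^2\int_0^\infty e^{w^2-t}\,dt$. One small wrinkle you gloss over: after taking the $q$-th root you actually get $(c|\Omega|)^{1/q}$, which is $\le c|\Omega|$ only when $c|\Omega|\ge1$; for small domains the prefactor is between $c|\Omega|$ and $1$ as $q$ varies, so a constant that is literally uniform in both $q$ and $\Omega$ in the form $c|\Omega|$ does not follow from this argument. For the fixed $\Omega$ of the paper this is harmless (the prefactor is simply absorbed into a domain-dependent constant, which is all that is ever used downstream), but since you flag the absorption explicitly it would be cleaner to either state the estimate with $(c|\Omega|)^{1/q}$ or with a constant $c=c(\Omega)$.
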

It follows that for any $p>1$ problem~\eqref{p} is equivalent to 
\begin{equation}
\label{ps}
\left\{
\begin{aligned}
&{u}={i^*_p}\[\rho^2\(e^{u}-\tau e^{-\ga u}\)\],\\ 
&{u}\in H^1_0(\Omega).
\end{aligned}\right.
\end{equation}
In order to further reduce \eqref{ps}, we recall that
the solutions to the Liouville problem
\begin{equation}\label{plim}
-\Delta w=e^w\quad \hbox{in}\quad \rr^2,\qquad
\int\limits_{\rr^2}  e^{w(x)}dx<+\infty,
\end{equation}
are given by the ``Liouville bubbles"
\begin{equation}
\label{walfa}
w_{\de,\xi}(x):=\ln{8\de^2\over\(\de^2+|x-\xi|^2\)^2}\quad
x,\xi\in\rr^2,\ \de>0.
\end{equation}
Moreover, for every $\xi\in\rr^2$, $\de>0$ there actually holds
$$
\int_{\rr^2}e^{w_{\de,\xi}(x)}\,dx=8\pi.
$$
We define the projection  $P:H^1(\Omega)\to H_0^1(\Omega)$
as the weak solution to the problem 
\begin{equation}
\label{pro}
\Delta Pu=\Delta u\quad \hbox{in}\ \Omega ,\qquad  P   u=0\quad \hbox{on}\ \partial\Omega,
\end{equation}
for all $u\in H^1(\Omega)$.
We shall use the following expansion, see Proposition~A.1 in \cite{EGP}.
\begin{lemma}[\cite{EGP}]
\label{pro-exp}
Let $w_{\de,\xi}$ be a Liouville bubble as defined in \eqref{walfa}
with $\xi\in\Omega$ and $\delta\to0$.
Then,
$$
Pw_{\de,\xi}(x)=w_{\de,\xi}(x)-\ln(8\de^2)+8\pi H(x,\xi)+O\(\de^2\)
$$
in $C^0(\overline\Omega)\cap C_{\mathrm{loc}}^2(\Omega)$
and
$$
Pw_{\de,\xi}(x)=8\pi G(x,\xi)+O(\delta^2)
$$
in $C^0(\overline\Omega\setminus\{\xi\})\cap C_{\mathrm{loc}}^2(\Omega\setminus\{\xi\})$.
\end{lemma}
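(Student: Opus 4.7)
The plan is to exploit the harmonicity of the difference between the projection and the bubble. Since $-\Delta w_{\delta,\xi}=e^{w_{\delta,\xi}}$ in $\Omega$ and by the definition of $P$ one has $\Delta Pw_{\delta,\xi}=\Delta w_{\delta,\xi}$ in $\Omega$, the auxiliary function
\[
\psi_{\delta,\xi}(x):=Pw_{\delta,\xi}(x)-w_{\delta,\xi}(x)
\]
is harmonic in $\Omega$ with boundary datum $\psi_{\delta,\xi}|_{\partial\Omega}=-w_{\delta,\xi}|_{\partial\Omega}$. Therefore the whole statement reduces to expanding this boundary datum up to an error of size $\delta^2$ and propagating the expansion to $\Omega$.

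First I would Taylor expand $w_{\delta,\xi}$ on $\partial\Omega$. Since $\xi\in\Omega$, $|x-\xi|\ge\operatorname{dist}(\xi,\partial\Omega)>0$ uniformly for $x\in\partial\Omega$, so
\[
-2\log(\delta^2+|x-\xi|^2)=-4\log|x-\xi|+O(\delta^2)\qquad\text{uniformly on }\partial\Omega.
\]
Since $G(\cdot,\xi)\equiv0$ on $\partial\Omega$, definition \eqref{def:H} yields $\log|x-\xi|=2\pi H(x,\xi)$ for $x\in\partial\Omega$, hence
\[
-w_{\delta,\xi}(x)=-\log(8\delta^2)+8\pi H(x,\xi)+O(\delta^2)\qquad\text{on }\partial\Omega.
\]

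Next I would transfer this identity to the interior by the maximum principle. The key observation is that $x\mapsto H(x,\xi)$ is harmonic in $\Omega$, because $-\Delta G(\cdot,\xi)=\delta_\xi$ and $-\frac{1}{2\pi}\Delta\log|\cdot-\xi|=-\delta_\xi$ cancel. Hence both $\psi_{\delta,\xi}$ and $x\mapsto-\log(8\delta^2)+8\pi H(x,\xi)$ are harmonic in $\Omega$ and their boundary difference is $O(\delta^2)$ uniformly. The maximum principle then propagates this bound to all of $\overline\Omega$, yielding the first expansion in $C^0(\overline\Omega)$. The strengthening to $C^2_{\mathrm{loc}}(\Omega)$ is obtained from standard interior estimates for harmonic functions applied to the $O(\delta^2)$ remainder, whose derivatives on any compact $K\Subset\Omega$ are controlled by its $L^\infty$ norm on a slightly larger compact set.

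Finally, to derive the second expansion I would substitute the explicit form of $w_{\delta,\xi}$ in the first one and use the identity $-4\log|x-\xi|=8\pi G(x,\xi)-8\pi H(x,\xi)$ valid on $\Omega\setminus\{\xi\}$, together with the uniform estimate $\log(\delta^2+|x-\xi|^2)=2\log|x-\xi|+O(\delta^2)$ on compact subsets of $\overline\Omega\setminus\{\xi\}$; the $8\pi H$ terms cancel and only $8\pi G(x,\xi)$ survives. The only point requiring some care in this otherwise elementary argument is the $C^2_{\mathrm{loc}}$ control of the error rather than merely its $C^0$ size, which is exactly where interior elliptic regularity for harmonic functions enters.
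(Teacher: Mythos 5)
Your argument is correct and is exactly the standard proof of this expansion: the paper itself does not reprove the lemma but cites Proposition~A.1 of \cite{EGP}, whose proof is the same harmonic-remainder argument you give. The key points are all in place: $Pw_{\de,\xi}-w_{\de,\xi}$ is harmonic with boundary trace $-w_{\de,\xi}\vert_{\partial\Omega}$; since $\xi$ stays at positive distance from $\partial\Omega$ one has $2\log(\de^2+|x-\xi|^2)=4\log|x-\xi|+O(\de^2)$ uniformly on $\partial\Omega$; the vanishing of $G(\cdot,\xi)$ on $\partial\Omega$ identifies $4\log|x-\xi|$ with $8\pi H(x,\xi)$ there; $H(\cdot,\xi)$ is harmonic in $\Omega$, so the maximum principle transfers the $O(\de^2)$ boundary bound on the difference to a uniform bound on $\overline\Omega$; interior gradient estimates for harmonic functions upgrade this to $C^2_{\mathrm{loc}}(\Omega)$. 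The passage to the second expansion, using $-2\log(\de^2+|x-\xi|^2)=-4\log|x-\xi|+O(\de^2)$ in $C^2$ on compacts away from $\xi$ so that the $8\pi H$ terms cancel against $8\pi G-8\pi H$, is also correct. No gap.
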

Finally, it will be convenient to set
\begin{equation}
\label{def:f}
f_\rho(t)=\rho^2\(e^{t}-\tau e^{-\ga t}\).
\end{equation}
\par
We seek a solution $u$ to problem~\eqref{p} (or equivalently to problem~\eqref{ps}) whose form is approximately 
the difference of two bubbles.
More precisely, we make the following
\subsection{Ansatz}
The solution $u$ is of the form 
\begin{equation}
\begin{aligned}
\label{ans}
u(x):=&W_\rho^\xi(x)+\phi(x),\\ 
W_\rho^\xi(x):=&Pw_1(x)-\frac{Pw_2(x)}{\ga},\ x\in\Omega.
\end{aligned}
\end{equation}
where we denote $w_i=w_{\de_i,\xi_i}$, $i=1,2$, for some $\de_i>0$ and $\xi_i\in\Omega$
with $\xi_1\neq\xi_2$.
\subsection{Choice of $\de_1,\de_2$}
We observe that $\Wrx$ is an approximate solution only if the quantity $\Delta\Wrx+f_\rho(\Wrx)$
is small. This condition uniquely determines $\delta_1,\de_2$. 
Indeed, in view of Lemma~\ref{pro-exp} we have near $\xi_1$ that
\begin{equation*}
e^{\Wrx(x)}=\frac{\exp\{8\pi H(\xi_1,\xi_1)
-\frac{8\pi}{\ga}G(\xi_1,\xi_2)\}}{8\de_1^2}e^{w_1(x)+O(\de_1^2+\de_2^2+|x-\xi_1|)}.
\end{equation*}
Similarly, near $\xi_2$ we have
\begin{equation*}
e^{-\ga\Wrx(x)}=\frac{\exp\{8\pi H(\xi_2,\xi_2)
-8\pi\ga\,G(\xi_1,\xi_2)\}}{8\de_2^2}e^{w_2(x)+O(\de_1^2+\de_2^2+|x-\xi_2|)}.
\end{equation*}
It follows that if the quantity:
\begin{equation}
\label{Wapprox}
R_\rho^\xi:=\Delta\Wrx+f_\rho(\Wrx)=-e^{w_1}+\frac{e^{w_2}}{\ga}
-\rho^2\(e^{\Wrx}-\tau e^{-\ga\Wrx}\)
\end{equation}
is in some sense small, then necessarily $\delta_1,\delta_2$ are given by
\begin{equation}
\label{eq:deltavalues}
\begin{aligned}
&\de_1^2=\frac{\rho^2}{8}\exp\left\{{8\pi H(\xi_1,\xi_1)-\frac{8\pi}{\ga}G(\xi_1,\xi_2)}\right\}\\
&\de_2^2=\frac{\rho^2\tau\ga}{8}\exp\left\{{8\pi H(\xi_2,\xi_2)-8\pi\ga\,G(\xi_1,\xi_2)}\right\}.
\end{aligned}
\end{equation}
Henceforth, we assume \eqref{eq:deltavalues}. We note that in particular
$\delta_1,\de_2$ have the same decay rate as $\rho$.
The precise decay rate of \eqref{Wapprox} is provided in the following lemma and will be used
repeatedly throughout this paper.
\begin{lemma}
\label{lem:deltachoice}
Let $\de_1,\de_2$ be defined by \eqref{eq:deltavalues}.
Then, for all $1\le p<2$ we have
\begin{equation*}
\left\|\rho^2e^{W_\rho^\xi}-e^{w_1}\right\|_{L^p(\Omega)}^p
+\left\|\rho^2\tau\ga e^{-\ga W_\rho^\xi}-e^{w_2}\right\|_{L^p(\Omega)}^p
\le C\rho^{2-p}.
\end{equation*}
In particular, 
\begin{equation}
\label{eq:f}
\|\Rrx\|_{L^p(\Omega)}^p=\|\Delta\Wrx+f_\rho(\Wrx)\|_{L^p(\Omega)}^p\le C\rho^{2-p}
\end{equation}
and
\begin{equation}
\label{eq:fprime}
\|f_\rho'(W_\rho^\xi)-(e^{w_1}+e^{w_2})\|_{L^p(\Omega)}^p\le C\rho^{2-p}.
\end{equation}
\end{lemma}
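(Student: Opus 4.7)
The plan is to partition $\Omega$ into the balls $B_d(\xi_1)$, $B_d(\xi_2)$ with $d\ll|\xi_1-\xi_2|/2$ and their complementary region, and bound $\rho^2 e^{\Wrx}-e^{w_1}$ and $\rho^2\tau\ga e^{-\ga\Wrx}-e^{w_2}$ on each piece. The values \eqref{eq:deltavalues} of $\de_1,\de_2$ were chosen precisely so that the leading singular part of $\rho^2 e^{\Wrx}$ near $\xi_1$ cancels against $e^{w_1}$, and symmetrically for the second term.

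On $B_d(\xi_1)$, combining the two regimes of Lemma~\ref{pro-exp} (the near-$\xi_1$ expansion for $Pw_1$ and the away-from-$\xi_2$ expansion for $Pw_2$) and plugging in the formula \eqref{eq:deltavalues} for $\de_1^2$ yields
$$\rho^2 e^{\Wrx(x)}=e^{w_1(x)}\exp\bigl\{8\pi[H(x,\xi_1)-H(\xi_1,\xi_1)]-\tfrac{8\pi}{\ga}[G(x,\xi_2)-G(\xi_1,\xi_2)]+O(\rho^2)\bigr\}.$$
The exponent is smooth in $x$ near $\xi_1$ (since $\xi_2\neq\xi_1$) and vanishes at $\xi_1$, hence is $O(|x-\xi_1|)+O(\rho^2)$, which gives $|\rho^2 e^{\Wrx}-e^{w_1}|\le C e^{w_1}\bigl(|x-\xi_1|+\rho^2\bigr)$ on $B_d(\xi_1)$. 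The dominant contribution is controlled by the standard bubble scaling
$$\int_{B_d(\xi_1)}e^{pw_1}|x-\xi_1|^p\,dx\le C\de_1^{2-p}\int_{\rr^2}\frac{|y|^p}{(1+|y|^2)^{2p}}\,dy\le C\rho^{2-p},$$
via $y=(x-\xi_1)/\de_1$; the tail integral converges because $p>2/3$, automatic for $p\ge 1$. The residual $\rho^2$-term contributes only $O(\rho^2)\le O(\rho^{2-p})$ since $p<2$.

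On $B_d(\xi_2)$ the function $\Wrx$ is uniformly bounded above (and very negative near $\xi_2$) thanks to the term $-Pw_2/\ga$, so $\rho^2 e^{\Wrx}=O(\rho^2)$; on the other hand $e^{w_1}=O(\de_1^2)=O(\rho^2)$ because $|x-\xi_1|$ is bounded below. The same is true on $\Omega\setminus(B_d(\xi_1)\cup B_d(\xi_2))$. Hence these two regions contribute $O(\rho^{2p})\le O(\rho^{2-p})$ to the $p$th power of the $L^p$ norm. The estimate for $\rho^2\tau\ga e^{-\ga\Wrx}-e^{w_2}$ is obtained by the symmetric argument, exchanging the roles of $(\xi_1,w_1)$ and $(\xi_2,w_2)$ and using the second expansion in Lemma~\ref{pro-exp} together with the formula for $\de_2^2$.

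For the consequences, observe that $\Rrx=-[\rho^2 e^{\Wrx}-e^{w_1}]+\frac{1}{\ga}[\rho^2\tau\ga e^{-\ga\Wrx}-e^{w_2}]$ and $\fr'(\Wrx)-(e^{w_1}+e^{w_2})=[\rho^2 e^{\Wrx}-e^{w_1}]+[\rho^2\tau\ga e^{-\ga\Wrx}-e^{w_2}]$, so \eqref{eq:f} and \eqref{eq:fprime} follow from the main estimate together with the triangle inequality. The main obstacle I anticipate is the careful bookkeeping needed to verify the precise cancellation between the exponential factors generated by Lemma~\ref{pro-exp} and the chosen values of $\de_1,\de_2$; once that cancellation is in hand, the remainder is a routine scaled-bubble integration.
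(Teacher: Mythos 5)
Your proof is correct and takes essentially the same route as the paper: both apply the projection expansion of Lemma~\ref{pro-exp} together with the choice \eqref{eq:deltavalues} of $\delta_1,\delta_2$ to cancel the leading singular part near each concentration point, bound $\bigl|e^{w_i+O(|x-\xi_i|+\rho^2)}-e^{w_i}\bigr|\le Ce^{w_i}\bigl(|x-\xi_i|+\rho^2\bigr)$, control the dominant contribution by the scaled bubble integral, and observe that away from the concentration points both terms are already $O(\rho^2)$. The only cosmetic difference is your explicit three-region split (the paper uses $B_\eps(\xi_1)$ and its complement), and your derivation of \eqref{eq:f} and \eqref{eq:fprime} by algebraic decomposition of $\Rrx$ and $f_\rho'(\Wrx)-(e^{w_1}+e^{w_2})$ is the intended one.
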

\begin{proof}
The proof is analogous to the proof of Lemma~B.1 in \cite{EGP}.
Since the asserted estimates are a key point in the $L^p$-setting
of problem~\eqref{p}, we outline the proof for the reader's convenience. 
We need to estimate the quantity
\begin{equation*}
\int_\Omega|\rho^2e^{\Wrx}-e^{w_1}|^p\,dx
=\int_\Omega|\rho^2e^{Pw_1-\ga^{-1}Pw_2}-e^{w_1}|^p\,dx.
\end{equation*}
Recalling the expansions in Lemma~\ref{pro-exp} and the value of $\delta_1$
as in \eqref{eq:deltavalues}, we have:
\begin{align*}
\int_{B_\eps(\xi_1)}&|\rho^2e^{\Wrx}-e^{w_1}|^p\,dx\\
=&\int_{B_\eps(\xi_1)}\left|\rho^2
\exp\Big\{w_1-\log(8\delta_1^2)
+8\pi H(x,\xi_1)-\frac{8\pi}{\ga}G(x,\xi_2)+O(\delta_1^2)\Big\}-e^{w_1}\right|^p\,dx\\
=&\int_{B_\eps(\xi_1)}\left|\rho^2
\exp\Big\{w_1-\log(8\delta_1^2)
+8\pi H(\xi_1,\xi_1)-\frac{8\pi}{\ga}G(\xi_1,\xi_2)+O(\delta_1^2+|x-\xi_1|)\Big\}-e^{w_1}\right|^p\,dx\\
=&\int_{B_\eps(\xi_1)}\left|e^{w_1+O(\delta_1^2+|x-\xi_1|)}-e^{w_1}\right|^p\,dx\\
\le&C\int_{B_\eps(\xi_1)}e^{pw_1}(\delta_1^2+|x-\xi_1|)^p\,dx.
\end{align*}
In turn, using the explicit form of $w_1$, we derive:
\begin{align*}
\int_{B_\eps(\xi_1)}&|\rho^2e^{\Wrx}-e^{w_1}|^p\,dx
\le C\de_1^{2p}\int_{B_\eps(\xi_1)}\frac{(\de_1^2+|x-\xi_1|)^p}{(\de_1^2+|x-\xi_2|^2)^{2p}}\,dx\\
\le&C\de_1^{2-p}\int_{B_{\eps/\de_1}(0)}\frac{(\de_1+|y|)^p}{(1+|y|^2)^{2p}}\,dy
\le C\rho^{2-p}.
\end{align*}
On the other hand, since in $\Omega\setminus B_\eps(\xi_1)$ we have $e^{w_1}\le C\delta_1^2$ 
and $\Wrx\le C$ for some $C>0$ independent of $\rho>0$,
 we readily obtain
\begin{align*}
\int_{\Omega\setminus B_\eps(\xi_1)}\left|\rho^2e^{\Wrx}\right|^p\,dx
+\int_{\Omega\setminus B_\eps(\xi_1)}e^{pw_1}\,dx\le C\rho^{2p}.
\end{align*}
Hence, we conclude that
\begin{equation*}
\left\|\rho^2e^{W_\rho^\xi}-e^{w_1}\right\|_{L^p(\Omega)}^p
\le C\rho^{2-p}.
\end{equation*}
The second decay estimate is obtained similarly.
\end{proof}
Estimate~\eqref{eq:fprime} will be used to prove the key invertibility
estimate for the linearized operator.
\subsection{Condition on $\xi_1,\xi_2$}
The concentration points $\xi_1,\xi_2$ are taken inside $\Omega$, 
far from the boundary of $\Omega$ and distinct, uniformly with respect to $\rho$. 
More precisely, $\xi_1,\xi_2$ satisfy the following condition:
\begin{equation}
\label{xi}
d(\xi_1,\partial\Omega),\ d(\xi_2,\partial\Omega), |\xi_1-\xi_2|\ge \eta\ \hbox{for some}\ \eta>0. 
\end{equation}
\subsection{The error term $\phi$}
The error term $\phi$ belongs to the subspace $K^\perp\subset H_0^1(\Omega)$ which we now define.
It is well known that for every $\de>0$, $\xi\in\rr^2$, the linearized problem
\begin{equation}
\label{lin}
-\Delta \psi=  e^{w_{\de,\xi}}\psi\quad \hbox{in}\quad \rr^2
\end{equation}
has a $3-$dimensional space of bounded solutions generated by the functions
\begin{equation}
\label{def:psi}
\begin{split}
\psi_{\delta,\xi}^j(x):=&\frac{1}{4}\frac{\partial w_{\de,\xi}}{\partial\xi_i}={x_i-\xi_i\over \de^2+|x-\xi|^2},\qquad j=1,2,\\
\psi_{\delta,\xi}^0(x):=&-\frac{\delta}{2}\frac{\partial w_{\de,\xi}}{\partial\de}={\de^2-|x-\xi|^2\over \de^2+|x-\xi|^2}.
\end{split}
\end{equation}
We shall need the following ``orthogonality relations" from \cite{EGP}, Lemma~A.4:
\begin{equation}
\label{eq:psiorthog}
\begin{aligned}
&\|P\psi_{\de,\xi}^0\|^2=\frac{D_0\de}{\rho}[1+O(\rho^2)]\\
&(P\psi_{\de,\xi}^j,P\psi_{\de,\xi}^l)_{H_0^1(\Omega)}=\frac{D\de^2}{\rho^4}[\de_{jl}+O(\rho^2)]\\
&(P\psi_{\de_1,\xi_1}^j,P\psi_{\de_2,\xi_2}^l)_{H_0^1(\Omega)}=O(1)
\end{aligned}
\end{equation}
as $\rho\to0$, uniformly in $\xi,\xi_1,\xi_2$ satisfying $\mathrm{dist}(\xi,\partial\Omega)\ge\eta$
and \eqref{xi}.
Here $D_0=64\int_{\rr^2}(1-|y|^2)/(1+|y|^2)^4$, $D=64\int_{\rr}|y|^2/(1+|y|^2)^4$
and $\delta_{jl}$ denotes the Kronecker symbol.
We set
\begin{align*}
&K:=\textrm{span}\left\{P\psi^j_{\de_i,\xi_i},\ i,j=1,2\right\},
&&K^\perp:=\left\{\phi\in H^1_0(\Omega)\ :\ \<\phi,P\psi^j_{\de_i,\xi_i}\>=0\ i,j=1,2\right\}.
\end{align*}
We also denote by
\begin{align*}     
&\Pi : H^1_0(\Omega)\to{K },      
&&{\Pi ^\perp}: H^1_0(\Omega)\to{K ^\perp }
\end{align*}
the corresponding projections.
Then, problem~\eqref{ps} is reduced to
the following system:
\begin{align}
\label{equ1}
{\Pi^\perp}\[u-{i^*}\(f_\rho(u)\)\]=&0\\
\label{equ2}
{\Pi}\[u-{i^*}\( f_\rho(u)\)\]=&0
\end{align}
where $u$ satisfies Ansatz~\eqref{ans} and $f_\rho$ is defined in \eqref{def:f}.
%%%%%%%%%%%%%%%%%%%%%%%%%%%%%%%%%%%%%%%%%%%%%%%%%%%%%%%%%%%%%%%%%%%%%%%%%%%%%%%%%%%%%%%%%%%%%%
%%%%%%%%%%%%%%%%%%%%%%%%%%%%%%%%%%%%%%%%%%%%%%%%%%%%%%%%%%%%%%%%%%%%%%%%%%%%%%%%%%%%%%%%%%%%%%
\section{The finite dimensional reduction}
\label{sec:finitedim}
%%%%%%%%%%%%%%%%%%%%%%%%%%%%%%%%%%%%%%%%%%%%%%%%%%%%%%%%%%%%%%%%%%%%%%%%%%%%%%%%%%%%%%%%%%%%%%
%%%%%%%%%%%%%%%%%%%%%%%%%%%%%%%%%%%%%%%%%%%%%%%%%%%%%%%%%%%%%%%%%%%%%%%%%%%%%%%%%%%%%%%%%%%%%%
In this section we obtain a solution for equation~\eqref{equ1} for any fixed $\xi_1,\xi_2\in\Omega$ satisfying
\eqref{xi}.
Namely, our aim is to show the following.
\begin{prop}
\label{prop:phi}
For any $p\in(1,2)$ there exists $\rho_0>0$ such that for any $\rho\in(0,\rho_0)$
and for any $\xi_1,\xi_2\in\Omega$ satisfying \eqref{xi} 
there exists a unique $\phi\in{K^\perp}$ such that
equation \eqref{equ1} is satisfied. Moreover,
\begin{equation}\label{fi1}
\|\phi\|=O\(\rho^{(2-p)/p}|\log\rho|\)\ \hbox{uniformly with respect to $\xi$ in compact sets of $\Omega.$}
\end{equation}
\end{prop}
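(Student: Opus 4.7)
The strategy is to recast \eqref{equ1} as a fixed-point problem on $K^\perp$ and to solve it by contraction mapping. Writing $u = \Wrx + \phi$ with $\phi \in K^\perp$ and noting that \eqref{Wapprox} yields the identity $\Wrx = i^*\!\bigl(\fr(\Wrx) - \Rrx\bigr)$, equation \eqref{equ1} becomes equivalent to
\begin{equation*}
\Lrx \phi = \Pi^\perp\bigl[i^*(\Rrx) + i^*(N(\phi))\bigr],
\end{equation*}
where the linearized operator and the superlinear remainder are defined by
\begin{equation*}
\Lrx \phi := \Pi^\perp\bigl[\phi - i^*\bigl(\fr'(\Wrx)\phi\bigr)\bigr], \qquad N(\phi) := \fr(\Wrx + \phi) - \fr(\Wrx) - \fr'(\Wrx)\phi.
\end{equation*}

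The crucial step is to prove that $\Lrx : K^\perp \to K^\perp$ is invertible with $\|\Lrx^{-1}\| \le C|\log\rho|$, uniformly in $\xi_1,\xi_2$ satisfying \eqref{xi}. Estimate \eqref{eq:fprime} shows that $\fr'(\Wrx)$ differs from $e^{w_1} + e^{w_2}$ in $L^p$ only by $O(\rho^{(2-p)/p})$, which reduces the question to a model linearization that is, to leading order, the superposition of two decoupled Liouville linearizations centered at $\xi_1$ and $\xi_2$. I would argue by contradiction in the standard fashion: a normalized sequence $\phi_n\in K^\perp$ with $\Lrx\phi_n\to 0$ in norm would, after rescaling around each $\xi_i$ and passing to the limit, produce a bounded solution of the linearized Liouville equation \eqref{lin} on $\rr^2$ lying in the $H^1$-orthogonal complement of the functions in \eqref{def:psi}; by the classification of the kernel this limit must vanish, contradicting normalization. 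The orthogonality relations \eqref{eq:psiorthog} guarantee that the interaction between the two bubbles is of lower order, so that the localization near each $\xi_i$ is legitimate, and the logarithmic factor is produced by the standard $L^\infty$-elliptic control of $\phi_n$ on the region away from the concentration points.

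With invertibility of $\Lrx$ in hand, the right-hand side is estimated by combining the continuity of $i^*_p$ (inequality \eqref{isp}) with Lemma~\ref{lem:deltachoice}: we obtain $\|i^*(\Rrx)\| \le c_p \|\Rrx\|_{L^p(\Omega)} \le C\rho^{(2-p)/p}$. For the nonlinear part, a Taylor expansion of $\fr$ together with the Moser--Trudinger inequality \eqref{ineq:MT} and Hölder's inequality yield $\|N(\phi)\|_{L^p(\Omega)} \le C\|\phi\|^2$ whenever $\|\phi\|$ remains bounded, and hence $\|i^*(N(\phi))\| \le C\|\phi\|^2$. Consequently the map
\begin{equation*}
T(\phi) := \Lrx^{-1}\Pi^\perp\bigl[i^*(\Rrx) + i^*(N(\phi))\bigr]
\end{equation*}
is a contraction on the ball $\bigl\{\phi \in K^\perp : \|\phi\| \le C_0\,\rho^{(2-p)/p}|\log\rho|\bigr\}$ provided $\rho$ is sufficiently small and $C_0$ is sufficiently large; the resulting unique fixed point gives the $\phi$ of \eqref{equ1} with the asserted bound \eqref{fi1}.

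The main technical obstacle is the uniform invertibility estimate for $\Lrx$ with the precise logarithmic loss tracked uniformly in $\xi_1,\xi_2$ satisfying \eqref{xi}. The presence of two bubbles of opposite sign (with the negative bubble scaled by the factor $1/\ga$ in \eqref{ans}) requires verifying that the cross interactions do not spoil the decoupled structure of the linearization; this is precisely what \eqref{eq:psiorthog} and \eqref{eq:fprime} ensure, so the analysis parallels the one-bubble scheme of \cite{EGP} up to straightforward modifications accounting for the $\ga$-rescaling.
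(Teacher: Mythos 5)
Your overall architecture — reduce \eqref{equ1} to the fixed-point equation $\phi = \Lrx^{-1}\Pi^\perp i^*[\Rrx + N(\phi)]$, prove the logarithmic invertibility bound for $\Lrx$, and run a contraction on a ball of radius $O(\rho^{(2-p)/p}|\log\rho|)$ — is exactly the paper's scheme, and your estimate $\|i^*(\Rrx)\|\le c_p\|\Rrx\|_p\le C\rho^{(2-p)/p}$ from Lemma~\ref{lem:deltachoice} is correct.

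However, the nonlinear estimate as you state it has a genuine gap. You claim $\|N(\phi)\|_{L^p(\Omega)}\le C\|\phi\|^2$ (with $C$ independent of $\rho$) whenever $\|\phi\|$ is bounded, but this is false for $p>1$. The second derivative $f_\rho''(\Wrx+\theta\phi)$ contains the term $\rho^2 e^{\Wrx+\theta\phi}$, and by Lemma~\ref{lem:deltachoice} the quantity $\rho^2 e^{\Wrx}$ is, to leading order, $e^{w_1}$, whose $L^r$-norm scales like $\delta_1^{2(1-r)/r}\sim\rho^{2(1-r)/r}$ — a \emph{negative} power of $\rho$ for any $r>1$. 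Consequently, any H\"older splitting of $\|f_\rho''(\cdot)\phi^2\|_p$ produces a factor $\rho^{2(1-t)/t}$ for some $t>1$, not a $\rho$-independent constant. The paper's Lemma~\ref{lem:Nqeps} records the correct bound
\begin{equation*}
\|\Nrx(\phi)\|_{L^q(\Omega)}\le C\rho^{2(1-q)/q-\eps}\exp\Big\{\frac{\|\phi\|^2}{4\pi\eps}\Big\}\|\phi\|^2,
\end{equation*}
and the contraction argument then hinges on the exponent bookkeeping: for any $p\in(1,2)$ one must choose $q>1$ close to $1$ and $\eps>0$ small so that $2(1-q)/q-\eps+(2-p)/p>0$, ensuring the quadratic term is subdominant to $\rho^{(2-p)/p}$ on the ball of radius $R_0|\log\rho|\rho^{(2-p)/p}$. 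This verification is the heart of Proposition~\ref{prop:contraction} and cannot be skipped.

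A secondary remark: for the invertibility of $\Lrx$ you propose the classical blow-up/contradiction argument (rescaling around each $\xi_i$, classifying the limit kernel via \eqref{lin}, \eqref{def:psi}). The paper instead avoids redoing that analysis by writing $f_\rho'(\Wrx)$ as a perturbation of $e^{w_1}+e^{w_2}$ (via \eqref{eq:fprime}) and then of a weight $\rho^2 V(x)e^{Pw_1+Pw_2}$ for which Proposition~3.1 of \cite{EGP} applies directly. Both routes yield the $|\log\rho|$ loss; the paper's is shorter because it recycles the known result, while yours would be more self-contained but requires carrying through the full contradiction-compactness argument.
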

\begin{rmk}
We note that if $(\xi_1,\xi_2)$ is a critical point for $\mathcal H_\ga$, then we actually
have $\|\phi\|=O(\rho^2)$, see Lemma~\ref{lem:phiimproved} in the Appendix.
\end{rmk}
We split the proof into several steps.
\subsection{The linear theory}
We consider the linearized operator  $\mathcal L_\rho^\xi:K^\perp\to K^\perp$ defined by
\begin{equation}
\label{def:linop}
\mathcal L_\rho^\xi\phi=\Pi^\perp\{\phi-i^*[f_\rho'(W_\rho^\xi)\phi]\}
\end{equation}
The following estimate holds.
\begin{prop}
\label{prop:Linvert}
There exists $c_\xi>0$ independent of $\rho$ such that
$$
\|\mathcal L_\rho^\xi\phi\|\ge\frac{c_\xi}{|\log\rho|}\|\phi\|,
\qquad\hbox{for all}\ \phi\in K^\perp.
$$
\end{prop}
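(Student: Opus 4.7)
\textit{Plan of proof.} I would argue by contradiction in the standard spirit of the Lyapunov--Schmidt reduction, following the scheme of del~Pino--Musso--Felmer type invertibility arguments and the analogous estimates in \cite{EGP}. Suppose the conclusion fails. Then there exist sequences $\rho_n\to 0$, concentration points $(\xi_{1,n},\xi_{2,n})$ satisfying \eqref{xi} (and converging, up to a subsequence, to some $(\xi_1,\xi_2)\in\mathcal F_2\Omega$), and $\phi_n\in K^\perp$ with $\|\phi_n\|=1$ such that
\[
|\log\rho_n|\,\|\Lrx\phi_n\|\to 0.
\]
Write $\Lrx\phi_n=:h_n\in K^\perp$, so by definition there exist Lagrange multipliers $c_n^{i,j}$, $i,j=1,2$, with
\[
\phi_n-i^*\bigl[f_{\rho_n}'(W_{\rho_n}^{\xi_n})\phi_n\bigr]=h_n+\sum_{i,j=1}^{2}c_n^{i,j}P\psi_{\de_{i,n},\xi_{i,n}}^j.
\]

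\textit{Step 1: control on the multipliers.} Test the identity with each $P\psi_{\de_{k,n},\xi_{k,n}}^l$ using the orthogonality relations \eqref{eq:psiorthog}. The dominant coefficient comes from $(P\psi^j,P\psi^l)=D\de^2\rho^{-4}[\de_{jl}+O(\rho^2)]$, while by \eqref{eq:fprime} the term $i^*[f_\rho'(\Wrx)\phi]$ tested against $P\psi^l$ can be compared to the $\mathbb R^2$-inner product of $e^{w_i}\phi$ with $\psi^l$, up to an error controlled by $\rho^{(2-p)/p}$. Elementary algebra (inverting the almost-diagonal Gram matrix) gives
\[
|c_n^{i,j}|=o\!\left(\frac{1}{|\log\rho_n|}\right),
\]
so the Lagrange-multiplier term is negligible in what follows.

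\textit{Step 2: blow-up analysis near each bubble.} For $i=1,2$ define the rescalings
\[
\tilde\phi_n^i(y):=\phi_n(\de_{i,n}y+\xi_{i,n}),\qquad y\in\Omega_{i,n}:=\de_{i,n}^{-1}(\Omega-\xi_{i,n}).
\]
Since $\|\phi_n\|=1$ and $H^1$ is conformally invariant in 2D, the $\tilde\phi_n^i$ are bounded in $H^1_{\mathrm{loc}}(\rr^2)$ and in $L^\infty_{\mathrm{loc}}$ (by elliptic regularity applied to the rescaled equation, whose right-hand side has a bounded $L^p$-integrand thanks to \eqref{eq:fprime}). Up to subsequences, $\tilde\phi_n^i\rightharpoonup\tilde\phi^i$ weakly in $H^1_{\mathrm{loc}}(\rr^2)$, and the weak limit solves the linearized Liouville equation \eqref{lin} in $\rr^2$, with $\tilde\phi^i$ bounded and $|\nabla\tilde\phi^i|\in L^2(\rr^2)$. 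By the classification of bounded solutions, $\tilde\phi^i$ is a linear combination of $\psi_{1,0}^0,\psi_{1,0}^1,\psi_{1,0}^2$. The orthogonality $\phi_n\in K^\perp$ (together with \eqref{eq:psiorthog} and Step~1) forces all coefficients to vanish, so $\tilde\phi^i\equiv 0$ for $i=1,2$.

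\textit{Step 3: the outer estimate and the $|\log\rho|^{-1}$ loss.} This is the main obstacle. Having killed the ``inner'' part of $\phi_n$ near each bubble, one must rule out energy concentration in the outer region. Multiply the equation by $\phi_n$ and integrate:
\[
\|\phi_n\|^2=\int_\Omega f_{\rho_n}'(W_{\rho_n}^{\xi_n})\phi_n^2\,dx+\langle h_n,\phi_n\rangle+\sum_{i,j}c_n^{i,j}\langle P\psi^j_{\de_{i,n},\xi_{i,n}},\phi_n\rangle.
\]
Using \eqref{eq:fprime} and H\"older, the first term reduces to $\int_\Omega(e^{w_{1,n}}+e^{w_{2,n}})\phi_n^2+o(1)$. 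On each annular shell around $\xi_{i,n}$ of radius $r$, the Moser--Trudinger inequality yields $\|e^{w_{i,n}}\|_{L^q}\le C\rho_n^{(2-2q)/q}$, and combining with the $L^{q'}$ bound on $\phi_n^2$ by Sobolev embedding one shows that the contribution of $\phi_n$ outside small balls $B_{R\de_{i,n}}(\xi_{i,n})$ is controlled by $|\log\rho_n|^{-1}\|\phi_n\|^2$, the logarithmic loss arising from the harmonic-like decay $|\log|x-\xi_i||/|\log\de_i|$ of the capacity potential between the inner region and a fixed ball. Since the inner contributions vanish by Step~2, one obtains
\[
1=\|\phi_n\|^2\le \frac{C}{|\log\rho_n|}\|\phi_n\|^2+o(1)=o(1),
\]
a contradiction. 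Assembling Steps~1--3 proves the desired estimate, uniformly in $\xi$ satisfying \eqref{xi}; the hard point, as anticipated, is the quantitative outer-region control that produces precisely the $|\log\rho|^{-1}$ factor appearing in the statement.
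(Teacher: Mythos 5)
Your proposal takes the ``direct'' contradiction route that the paper explicitly declines: the authors state that one could adapt the proof of Proposition~3.1 in \cite{EGP} step-by-step, but instead they prove Lemma~\ref{lem:EGPinvert} (reducing $\mathcal L_\rho^\xi$ to a perturbation of the already-analyzed operator $\mathcal L_V$ from \cite{EGP} via \eqref{est:EGPlemB1} and \eqref{eq:fprime}), and then apply the EGP estimate \eqref{est:LVinvert} as a black box. Your blow-up argument is therefore a legitimate alternative in spirit, but as written it has a real gap.

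The gap is in your Step~2, and it touches the very place where the $|\log\rho|^{-1}$ factor originates. The space $K$ is defined as $\mathrm{span}\{P\psi^j_{\de_i,\xi_i}:\,i,j=1,2\}$, i.e.\ it contains only the \emph{translation} modes $\psi^1,\psi^2$ and \emph{not} the dilation mode $\psi^0$ (the scales $\de_1,\de_2$ are slaved to $(\xi_1,\xi_2)$ by \eqref{eq:deltavalues} and are not free parameters). Hence $\phi_n\in K^\perp$ gives $\int e^{w_{i,n}}\psi^j_{i,n}\phi_n\,dx=0$ only for $j=1,2$, and after rescaling this kills $a^i_1=a^i_2$ in the representation $\tilde\phi^i=a^i_0\psi^0+a^i_1\psi^1+a^i_2\psi^2$, but it does \emph{not} force $a^i_0=0$. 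Neither \eqref{eq:psiorthog} nor your Step~1 supplies that missing piece of information. In fact the $P\psi^0_i$ directions are precisely the ``almost-kernel'' of $\mathcal L_\rho^\xi$, and $\|\mathcal L_\rho^\xi P\psi^0_i\|\sim |\log\rho|^{-1}\|P\psi^0_i\|$; this is why the estimate in Proposition~\ref{prop:Linvert} cannot hold with a $\rho$-independent constant. Any proof by contradiction must therefore include a quantitative argument (typically a Pohozaev-type identity or a direct computation of $\langle \mathcal L_\rho^\xi\phi_n, P\psi^0_i\rangle$) that controls $a^i_0$ and produces the logarithmic loss exactly there. Your Step~3 already notices that the $|\log\rho|^{-1}$ loss ``comes from somewhere,'' and you point at a capacity estimate in the outer annulus --- that intuition is related to the behaviour of $P\psi^0_i$ near $\partial\Omega$ --- but it cannot rescue a Step~2 that has incorrectly declared $\tilde\phi^i\equiv0$. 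As it stands, your chain of implications would yield the non-logarithmic bound $\|\mathcal L_\rho^\xi\phi\|\ge c\|\phi\|$, which is false. Repairing the argument requires treating $a^i_0$ separately; the paper sidesteps this entirely by importing the estimate from \cite{EGP}.
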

The proof of Proposition~\ref{prop:Linvert} may be derived by adapting step-by-step
to our situation the proof of Proposition~3.1 in \cite{EGP}. Here, alternatively,
we choose to prove Proposition~\ref{prop:Linvert} by 
reducing $\mathcal L_\rho^\xi$ to a suitable operator $L_\rho^\xi$ 
to which Proposition~3.1 in \cite{EGP} may be applied directly.
To this end, we first show the following.
\begin{lemma}
\label{lem:EGPinvert}
Let $\xi=(\xi_1,\xi_2)\in\Omega^2$, $\xi_1\neq\xi_2$,
and let $\de_1,\de_2>0$ be such that $0<a\rho\le\de_i\le b\rho$, $i=1,2$
for some $0<a\le b$.
Let $L_\rho^\xi:K^\perp\to K^\perp$ be defined by
$$
L_\rho^\xi\phi=\Pi^\perp\{\phi-i^*[(e^{w_1}+e^{w_2})\phi]\}.
$$
There exists $\tilde c_\xi>0$ depending on $\mathrm{dist}(\xi_i,\partial\Omega)$
and $a,b$ only, such that
$$
\|L_\rho^\xi\phi\|\ge\frac{\tilde c_\xi}{|\log\rho|}\|\phi\|.
$$
\end{lemma}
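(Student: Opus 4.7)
The plan is to deduce the estimate as a direct application of Proposition~3.1 in \cite{EGP}. The key observation is that $L_\rho^\xi$ is structurally identical to the linearized operator studied there: it is precisely the operator one obtains by linearizing the standard (positive-sign) two-bubble Liouville problem at the ansatz $Pw_{\de_1,\xi_1}+Pw_{\de_2,\xi_2}$, whose derivative produces the weight $e^{w_1}+e^{w_2}$ appearing in $L_\rho^\xi$. Since the subspace $K^\perp$ is defined here using the same family $\{P\psi^j_{\de_i,\xi_i}:\,i,j=1,2\}$ as in \cite{EGP}, the two functional frameworks coincide verbatim.

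First I would verify the hypotheses of \cite[Prop.~3.1]{EGP}. The assumption $\xi_1\neq\xi_2$ with $\xi_i\in\Omega$, supplemented by working on compact subsets, yields lower bounds $\mathrm{dist}(\xi_i,\partial\Omega)\ge\eta_0$ and $|\xi_1-\xi_2|\ge\eta_1$; these are the geometric inputs that make the orthogonality expansions \eqref{eq:psiorthog} uniform. The size assumption $a\rho\le\de_i\le b\rho$ provides the standard comparability between the concentration parameters and the small parameter $\rho$. Granting these, \cite[Prop.~3.1]{EGP} applies directly and delivers
\[
\|L_\rho^\xi\phi\|\ge\frac{\tilde c_\xi}{|\log\rho|}\|\phi\|,\qquad\phi\in K^\perp,
\]
with $\tilde c_\xi$ depending only on $\eta_0,\eta_1,a,b$.

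For transparency I would briefly recall (without reproducing) the structure of the underlying proof in \cite{EGP}: proceed by contradiction, taking $\phi_\rho\in K^\perp$ with $\|\phi_\rho\|=1$ and $\|L_\rho^\xi\phi_\rho\|=o(1/|\log\rho|)$; rescale around each bubble via $\tilde\phi_{i,\rho}(y):=\phi_\rho(\de_i y+\xi_i)$ and apply elliptic regularity plus the Moser-Trudinger inequality (Lemma~\ref{tmt}) to pass to a bounded limit solving $-\Delta\psi=\frac{8}{(1+|y|^2)^2}\psi$ on $\mathbb R^2$; invoke nondegeneracy to identify the limit within the three-dimensional span generated by the translation modes $\psi^1,\psi^2$ and the dilation mode $\psi^0$. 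The orthogonality conditions $\langle\phi_\rho,P\psi^j_{\de_i,\xi_i}\rangle=0$ for $j=1,2$ kill the translation components, while the dilation component (whose mode $\psi^0$ is not placed in $K$) is precisely what prevents a uniform bound and only yields the $1/|\log\rho|$ rate; a careful account of this contribution via \eqref{eq:psiorthog} closes the contradiction.

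The main obstacle does not lie in this lemma itself -- which reduces to citing \cite{EGP} once the hypotheses are matched -- but in the ensuing derivation of Proposition~\ref{prop:Linvert}. The operators $\mathcal L_\rho^\xi$ and $L_\rho^\xi$ differ by the multiplier $f_\rho'(\Wrx)-(e^{w_1}+e^{w_2})$, whose $L^p$ norm is controlled by \eqref{eq:fprime}. By choosing $p\in(1,2)$ so that $\rho^{(2-p)/p}$ overwhelms the $1/|\log\rho|$ factor, this difference acts as an absorbable compact perturbation, and the invertibility of $L_\rho^\xi$ transfers to $\mathcal L_\rho^\xi$ with the same rate; this is the step where most of the care will be needed.
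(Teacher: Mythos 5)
Your proposal treats the lemma as a bare citation of \cite[Prop.~3.1]{EGP}, claiming the two operators are ``structurally identical,'' but this is not quite right, and the discrepancy is precisely what the paper's proof has to account for. The operator covered by EGP's Proposition~3.1, as used in this paper, is
\[
\mathcal L_V\phi=\Pi^\perp\bigl\{\phi-i^*[\rho^2V(x)\,e^{Pw_1+Pw_2}\phi]\bigr\},
\]
whose multiplier is $\rho^2V(x)e^{Pw_1+Pw_2}$ --- the genuine linearization of a mean-field equation at the ansatz of \emph{projected} bubbles with a potential $V$ --- not the simpler weight $e^{w_1}+e^{w_2}$ that appears in $L_\rho^\xi$. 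These two multipliers agree only asymptotically near each concentration point, and the paper makes this quantitative rather than taking it for granted. Consequently there are two ingredients you omit. First, one must \emph{choose} a smooth positive potential $V$ so that EGP's compatibility formula (their (2.6)) between $\de_i$, $\rho$ and $V(\xi_i)$ is satisfied; this is what makes $\mathcal L_V$ a legitimate instance of EGP's Proposition~3.1, and the hypothesis $a\rho\le\de_i\le b\rho$ is what guarantees such a $V$ exists and is bounded above and below. Second, one must compare $L_\rho^\xi$ to $\mathcal L_V$, using EGP's Lemma~B.1 (or Lemma~\ref{lem:deltachoice}) to obtain $\|\rho^2V e^{Pw_1+Pw_2}-(e^{w_1}+e^{w_2})\|_{L^p}^p\le C\rho^{2-p}$, and then the $i^*_p$-continuity bound \eqref{isp} plus H\"older to conclude $\|L_\rho^\xi\phi\|\ge\|\mathcal L_V\phi\|-C\rho^{(2-p)/p}\|\phi\|$, after which the $1/|\log\rho|$ coercivity of $\mathcal L_V$ absorbs the error.

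In other words, the perturbation-absorption step you delegate entirely to ``the ensuing derivation of Proposition~\ref{prop:Linvert}'' is already needed \emph{inside} this lemma, just with a different pair of multipliers ($\rho^2Ve^{Pw_1+Pw_2}$ versus $e^{w_1}+e^{w_2}$, rather than $e^{w_1}+e^{w_2}$ versus $f_\rho'(\Wrx)$). Your sketch of the compactness/nondegeneracy machinery behind EGP's proof is accurate and would indeed support a self-contained re-derivation for $L_\rho^\xi$ directly, but that is not what you propose: you propose to cite Proposition~3.1 as a black box, and as stated it does not apply to $L_\rho^\xi$ without the intermediate operator $\mathcal L_V$ and the associated $L^p$ comparison.
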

\begin{proof}
Let $V(x)$ be any smooth positive function defined on $\Omega$
satisfying
$$
\frac{\delta_1}{\rho}=\sqrt{\frac{V(\xi_1)}{8}}e^{4\pi(H(\xi_1,\xi_1)+G(\xi_1,\xi_2))},
\qquad
\frac{\delta_2}{\rho}=\sqrt{\frac{V(\xi_2)}{8}}e^{4\pi(H(\xi_2,\xi_2)+G(\xi_1,\xi_2))},
$$
see formula (2.6) in \cite{EGP}.
That is,
$$
V(\xi_1)=\frac{8\de_1^2}{\rho^2}e^{-8\pi H(\xi_1,\xi_1)-8\pi G(\xi_1,\xi_2)},
\qquad
V(\xi_2)=\frac{8\de_2^2}{\rho^2}e^{-8\pi H(\xi_2,\xi_2)-8\pi G(\xi_1,\xi_2)}.
$$
Then, following Lemma~B.1 in \cite{EGP} (or the proof of Lemma~\ref{lem:deltachoice}), we have
\begin{equation}
\label{est:EGPlemB1}
\|\rho^2V(x)e^{Pw_1+Pw_2}-(e^{w_1}+e^{w_2})\|_{L^p(\Omega)}^p\le C\rho^{2-p}.
\end{equation}
We define the operator $\mathcal L_V\phi$ by setting
$$
\mathcal L_V\phi=\Pi^\perp\{\phi-i^*[\rho^2V(x)e^{Pw_1+Pw_2}\phi]\}.
$$
Then Proposition~3.1 in \cite{EGP} states that there exists $c_V>0$ such that
\begin{equation}
\label{est:LVinvert}
\|\mathcal L_V\phi\|\ge\frac{c_V}{|\log\rho|}\|\phi\|
\end{equation}
for all $\phi\in K^\perp$.
Thus, we may write
\begin{align*}
\|L_\rho^\xi\phi\|=&\|\Pi^\perp\{\phi-i^*[(e^{w_1}+e^{w_2})\phi]\}\|\\
\ge\|\Pi^\perp&\{\phi-i^*[\rho^2V(x)e^{Pw_1+Pw_2}\phi]\}\|
-\|\Pi^\perp\{i^*[(e^{w_1}+e^{w_2}-\rho^2V(x)e^{Pw_1+Pw_2})\phi]\}\|.
\end{align*}
We estimate the last term, for any $1<q<p<2$, using \eqref{est:EGPlemB1}:
\begin{align*}
\|\Pi^\perp\{i^*[(e^{w_1}+e^{w_2}-\rho^2V(x)&e^{Pw_1+Pw_2})\phi]\}\|
\le\|i^*[(e^{w_1}+e^{w_2}-\rho^2V(x)e^{Pw_1+Pw_2})\phi]\|\\
\le&c_q\|(e^{w_1}+e^{w_2}-\rho^2V(x)e^{Pw_1+Pw_2})\phi\|_q\\
\le&c_q\|e^{w_1}+e^{w_2}-\rho^2V(x)e^{Pw_1+Pw_2}\|_p\|\phi\|_{pq/(p-q)}\\
\le&C\rho^{(2-p)/p}\|\phi\|.
\end{align*}
It follows that 
$$
\|L_\rho^\xi\phi\|\ge\|\mathcal L_V\phi\|-C\rho^{(2-p)/p}\|\phi\|
\ge\left(\frac{c_V}{|\log\rho|}-C\rho^{(2-p)/p}\right)\|\phi\|.
$$
Hence, the asseted estimate holds with $\tilde c_\xi=c_V/2$.
\end{proof}
Now the proof of Proposition~\ref{prop:Linvert} is readily derived from
Lemma~\ref{lem:deltachoice} and Lemma~\ref{lem:EGPinvert}.
\begin{proof}[Proof of Proposition~\ref{prop:Linvert}]
We estimate
\begin{align*}
\|\mathcal L_\rho^\xi\phi\|\ge&\|\Pi^\perp\{\phi-i^*[(e^{w_1}+e^{w_2})\phi]\}\|
-\|\Pi^\perp i^*\{[f_\rho'(W_\rho^\xi)-(e^{w_1}+e^{w_2})]\phi\}\|\\
=&\|L_\rho^\xi\phi\|-\|\Pi^\perp i^*\{[f_\rho'(W_\rho^\xi)-(e^{w_1}+e^{w_2})]\phi\}\|.
\end{align*}
On the other hand, for any $q\in[1,p)$, we have
\begin{align*}
\|\Pi^\perp i^*&\{[f_\rho'(W_\rho^\xi)-(e^{w_1}+e^{w_2})]\phi\}\|
\le\|i^*\{[f_\rho'(W_\rho^\xi)-(e^{w_1}+e^{w_2})]\phi\}\|\\
\le&c_q\|[f_\rho'(W_\rho^\xi)-(e^{w_1}+e^{w_2})]\phi\|_q
\le c_q\|f_\rho'(W_\rho^\xi)-(e^{w_1}+e^{w_2})\|_p\|\phi\|_{pq/(p-q)}\\
\le& C\rho^{(2-p)/p}\|\phi\|.
\end{align*}
It follows that for sufficiently small $\rho$ we have 
\begin{align*}
\|\mathcal L_\rho^\xi\phi\|
\ge&\|L_\rho^\xi\phi\|-\|\Pi^\perp i^*\{(f_\rho'(W_\rho^\xi)-(e^{w_1}+e^{w_2}))\phi\}\|\\
\ge&\frac{\tilde c_\xi}{|\log\rho|}\|\phi\|-C\rho^{(2-p)/p}\|\phi\|\ge\frac{c_\xi}{|\log\rho|}\|\phi\|,
\end{align*}
with $c_\xi=\tilde c_\xi/2$.
\end{proof}
%%%%%%%%%%%%%%%%%%%%%%%%%%%%%%%%%%%%%%%%%%%%%%%%%%%%%%%%%%%%%%%%%%%%%%%%%%%%%
\subsection{The contraction argument}
Recall that we seek solutions to system~\eqref{equ1}--\eqref{equ2}
satisfying Ansatz~\eqref{ans}.
Hence, we rewrite equation~\eqref{equ1}:
\begin{equation}
\label{eq:equ1ans}
\Pi^\perp\{\phi-i^*[\fr(\Wrx+\phi)+\Delta\Wrx]\}=0.
\end{equation}
We recall from \eqref{Wapprox} that
$$
R_\rho^\xi=\Delta\Wrx+f_\rho(\Wrx).
$$
Setting 
\begin{align}
\label{def:NR}
\Nrx(\phi):=\fr(\Wrx+\phi)-\fr(\Wrx)-\fr'(\Wrx)\phi,
\end{align}
we may write
\begin{equation}
\label{eq:NR}
\fr(\Wrx+\phi)+\Delta\Wrx=\Nrx(\phi)+\Rrx+\fr'(\Wrx)\phi.
\end{equation}
Hence, using \eqref{eq:NR} and the definition \eqref{def:linop} of $\mathcal L_\rho^\xi$
we may rewrite \eqref{eq:equ1ans} in the form
\begin{equation}
\mathcal L_\rho^\xi\phi=\Pi^\perp\{\phi-i^*[\fr'(\Wrx)\phi]\}=\Pi^\perp\circ i_p^*[\Nrx(\phi)+\Rrx].
\end{equation}
Finally, setting
$$
\Trx(\phi):=(\Lrx)^{-1}\circ\Pi^\perp\circ i^*[\Nrx(\phi)+\Rrx],
$$
we are finally reduced to solve the following fixed point equation for $\phi$:
\begin{equation}
\label{prob:phi}
\phi=\Trx(\phi).
\end{equation}
The existence of a solution for \eqref{prob:phi} will follow from the following.
\begin{prop}
\label{prop:contraction}
For any $p\in(1,2),$ there exists $R_0=R_0(\xi,p)>0$ such that $\Trx$
is a contraction in $\mathcal B_{R_0\rho^{(2-p)/p}|\log\rho|}\subset K^\perp$.   
\end{prop}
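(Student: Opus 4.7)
The plan is a standard contraction-mapping argument. We show that the operator
\begin{equation*}
\Trx(\phi)=(\Lrx)^{-1}\circ\Pi^\perp\circ i^*[\Nrx(\phi)+\Rrx]
\end{equation*}
maps the closed ball $\mathcal B=\{\phi\in K^\perp:\|\phi\|\le R_0\rho^{(2-p)/p}|\log\rho|\}$ into itself and is a strict contraction there, provided $R_0$ is chosen large and $\rho$ small. Combining the invertibility of $\Lrx$ from Proposition~\ref{prop:Linvert} with the continuity estimate \eqref{isp} for $i^*$ and the boundedness of $\Pi^\perp$ yields the basic bound
\begin{equation*}
\|\Trx(\phi)\|\le C|\log\rho|\,\bigl(\|\Nrx(\phi)\|_{L^p(\Omega)}+\|\Rrx\|_{L^p(\Omega)}\bigr),
\end{equation*}
in which the error term $\|\Rrx\|_p\le C\rho^{(2-p)/p}$ is already controlled by Lemma~\ref{lem:deltachoice}.

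The core of the argument is the estimate of $\|\Nrx(\phi)\|_p$. Using the definition \eqref{def:NR} and the elementary inequality $|e^t-1-t|\le\tfrac12 t^2 e^{|t|}$ applied separately to each exponential piece of $\fr$, one obtains the pointwise bound
\begin{equation*}
|\Nrx(\phi)(x)|\le C\rho^2\bigl(e^{\Wrx(x)}\phi^2(x)e^{|\phi(x)|}+e^{-\ga\Wrx(x)}\phi^2(x)e^{\ga|\phi(x)|}\bigr).
\end{equation*}
Fixing an auxiliary exponent $q$ with $1<q<p<2$ and splitting this product via H\"older's inequality, I would control the three factors as follows: (a) $\rho^2 e^{\pm\ga\Wrx}$ is reduced to $e^{w_i}$ up to an $L^p$-error of order $\rho^{(2-p)/p}$ using Lemma~\ref{lem:deltachoice}, while the higher $L^s$ norms of $e^{w_i}$ are computed explicitly from \eqref{walfa}; (b) the factors $e^{|\phi|},e^{\ga|\phi|}$ have all $L^r$ norms uniformly bounded on $\mathcal B$ thanks to the Moser--Trudinger inequality \eqref{ineq:MT}, since $\|\phi\|\to0$; (c) $\phi^2$ is estimated by $C\|\phi\|^2$ in any $L^s$ through the Sobolev embedding $H_0^1(\Omega)\hookrightarrow L^s(\Omega)$. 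Assembling the pieces gives $\|\Nrx(\phi)\|_p\le C\|\phi\|^2$, possibly up to an irrelevant logarithmic factor that is absorbed by $R_0$.

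Combining this with the basic bound yields $\|\Trx(\phi)\|\le C|\log\rho|(\rho^{(2-p)/p}+\|\phi\|^2)$. For $\|\phi\|\le R_0\rho^{(2-p)/p}|\log\rho|$ the quadratic term is $O(\rho^{2(2-p)/p}|\log\rho|^2)$, which is negligible with respect to $\rho^{(2-p)/p}$ as $\rho\to0$, so $\Trx$ maps $\mathcal B$ into itself. The contraction property is obtained by an entirely parallel computation: the mean value theorem combined with the same H\"older/Moser--Trudinger machinery gives
\begin{equation*}
\|\Nrx(\phi_1)-\Nrx(\phi_2)\|_p\le C(\|\phi_1\|+\|\phi_2\|)\|\phi_1-\phi_2\|,
\end{equation*}
whence $\|\Trx(\phi_1)-\Trx(\phi_2)\|\le CR_0\rho^{(2-p)/p}|\log\rho|^2\,\|\phi_1-\phi_2\|<\|\phi_1-\phi_2\|$ for $\rho$ small. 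Banach's fixed point theorem then produces the required unique $\phi\in\mathcal B$.

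The principal technical obstacle is the estimate of $\Nrx$: one must simultaneously manage the bubble concentration, which makes $\rho^2 e^{\pm\ga\Wrx}$ singular in high $L^s$ norms; the exponential factor $e^{|\phi|}$, which is only tame once $\|\phi\|$ is shown to be small via Moser--Trudinger; and the constraint $p<2$, required for Lemma~\ref{lem:deltachoice} to apply. The sign-changing structure $\Wrx=Pw_1-Pw_2/\ga$ creates two distinct concentration regions around $\xi_1$ and $\xi_2$, but each is treated separately by the same scheme, so this introduces bookkeeping rather than a genuinely new difficulty.
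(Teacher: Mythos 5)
Your overall skeleton matches the paper's: bound $\|\Trx\phi\|$ by $C|\log\rho|\,(\|\Nrx(\phi)\|_q+\|\Rrx\|_p)$ using Proposition~\ref{prop:Linvert} and \eqref{isp}, control $\Rrx$ by Lemma~\ref{lem:deltachoice}, control $\Nrx$ by a mean value/H\"older/Moser--Trudinger argument, and then balance exponents. However, the central estimate $\|\Nrx(\phi)\|_p\le C\|\phi\|^2$ ``up to an irrelevant logarithmic factor'' is incorrect, and the gap is not cosmetic. When you H\"older the factor $\rho^2 e^{\Wrx}\approx e^{w_1}$ against $\phi^2 e^{|\phi|}$, the bubble term must be placed in some $L^a$ with $a>p>1$, and a direct computation from \eqref{walfa} gives $\|e^{w_1}\|_{L^a}\sim\delta_1^{2/a-2}\sim\rho^{2/a-2}$, which is a \emph{diverging} negative power of $\rho$, not a logarithm. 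The correct form of the estimate is what the paper records in Lemma~\ref{lem:Nqeps}:
\begin{equation*}
\|\Nrx(\phi)\|_{L^q(\Omega)}\le C\,\rho^{2(1-q)/q-\eps}\exp\Big\{\tfrac{1}{4\pi\eps}\|\phi\|^2\Big\}\|\phi\|^2,
\end{equation*}
with $2(1-q)/q-\eps<0$.

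This divergent prefactor is then tamed not by choosing $R_0$ large, but by the quadratic smallness of $\|\phi\|^2\sim\rho^{2(2-p)/p}|\log\rho|^2$: after multiplying, the net exponent on $\rho$ is $2(1-q)/q-\eps+(2-p)/p$ (relative to the target $\rho^{(2-p)/p}$), and one must \emph{choose} $q>1$ close to $1$ and $\eps>0$ small so that this quantity is strictly positive. Your write-up never surfaces this exponent bookkeeping, which is precisely where the restriction on the available range of $p$ (and the improvement over \cite{EGP}, where $p\in(1,4/3)$ is required) enters. The same omission recurs in your contraction step: the claimed Lipschitz bound $\|\Nrx(\phi_1)-\Nrx(\phi_2)\|_p\le C(\|\phi_1\|+\|\phi_2\|)\|\phi_1-\phi_2\|$ also carries a hidden $\rho^{2(1-q)/q-\eps}$ factor that must be cancelled by the smallness of $\|\phi_1\|+\|\phi_2\|$ under the same exponent constraint. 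Once you restore the $\rho^{2(1-q)/q-\eps}$ factor and add the explicit choice of $q,\eps$ making $2(1-q)/q-\eps+(2-p)/p>0$, the rest of your argument closes and coincides with the paper's proof.
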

\begin{rmk}
We note that Proposition~\ref{prop:contraction} slightly improves 
Proposition~4.1 in \cite{EGP}, where
the condition $p\in(1,4/3)$ is required. 
\end{rmk}
We begin by some lemmas.
The following elementary result is useful to estimate $\Nrx$.
\begin{lemma}
\label{lem:MVT}
Let $\fr\in C^2(\rr,\rr)$ and let $\Nrx$ be correspondingly defined by \eqref{def:NR}.
Then, for all $\phi,\psi\in\rr$ there exist $\eta,\theta\in[0,1]$
such that
$$
|\Nrx(\phi)-\Nrx(\psi)|\le|\fr''(\Wrx+\eta(\theta\phi+(1-\theta)\psi))|\,(|\phi|+|\psi|)\,|\phi-\psi|.
$$
\end{lemma}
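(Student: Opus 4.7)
The proof is pure elementary calculus: the lemma is the standard second-order Taylor remainder estimate obtained by applying the mean value theorem twice. The plan is as follows. First, using the definition \eqref{def:NR}, I would rewrite
$$
\Nrx(\phi)-\Nrx(\psi)=[\fr(\Wrx+\phi)-\fr(\Wrx+\psi)]-\fr'(\Wrx)(\phi-\psi),
$$
where the $\fr(\Wrx)$ terms cancel outright. Applying the one-variable mean value theorem to the bracketed difference produces some $\theta\in[0,1]$ with
$$
\fr(\Wrx+\phi)-\fr(\Wrx+\psi)=\fr'\bigl(\Wrx+\theta\phi+(1-\theta)\psi\bigr)(\phi-\psi),
$$
so that the identity becomes
$$
\Nrx(\phi)-\Nrx(\psi)=\bigl[\fr'(\Wrx+\theta\phi+(1-\theta)\psi)-\fr'(\Wrx)\bigr](\phi-\psi).
$$

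In the second step, I would apply the mean value theorem again, this time to $\fr'$ on the segment from $\Wrx$ to $\Wrx+\theta\phi+(1-\theta)\psi$, obtaining some $\eta\in[0,1]$ with
$$
\fr'(\Wrx+\theta\phi+(1-\theta)\psi)-\fr'(\Wrx)=\fr''\bigl(\Wrx+\eta(\theta\phi+(1-\theta)\psi)\bigr)\,(\theta\phi+(1-\theta)\psi).
$$
Combining the two identities, taking absolute values and using the convex-combination bound $|\theta\phi+(1-\theta)\psi|\le\theta|\phi|+(1-\theta)|\psi|\le|\phi|+|\psi|$ yields the claimed inequality. There is no genuine obstacle here: the hypothesis $\fr\in C^2(\rr,\rr)$ and the one-dimensional setting make both applications of MVT routine. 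The content of the lemma is entirely structural — it isolates the factor $|\fr''|$ evaluated at an intermediate point, which will later be combined with the explicit form $\fr''(t)=\rho^2(e^t-\tau\ga^2e^{-\ga t})$ and Moser--Trudinger type bounds to control $\|\Nrx(\phi)-\Nrx(\psi)\|$ in the contraction scheme leading to Proposition~\ref{prop:contraction}.
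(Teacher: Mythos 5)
Your proof is correct and follows exactly the same two-step mean value theorem argument as the paper: the paper writes the intermediate point as $\Wrx+\psi+\theta(\phi-\psi)$, which is identical to your $\Wrx+\theta\phi+(1-\theta)\psi$, and then applies the mean value theorem to $\fr'$ in precisely the same way. The final step of bounding $|\theta\phi+(1-\theta)\psi|\le|\phi|+|\psi|$ is what the paper means by ``the asserted estimate now easily follows.''
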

\begin{proof}
Applying the Mean Value Theorem twice, we have:
\begin{equation}
\begin{aligned}
\Nrx(\phi)-\Nrx(\psi)=&\fr(\Wrx+\phi)-\fr(\Wrx+\psi)-\fr'(\Wrx)(\phi-\psi)\\
=&\fr'(\Wrx+\psi+\theta(\phi-\psi))(\phi-\psi)-\fr'(\Wrx)(\phi-\psi)\\
=&\fr''(\Wrx+\eta[\theta\phi+(1-\theta)\psi])[\theta\phi+(1-\theta)\psi](\phi-\psi).
\end{aligned}
\end{equation}
The asserted estimate now easily follows.
\end{proof}
\begin{lemma}
\label{lem:Nqeps}
Let $\fr$ be given by \eqref{def:f}.
Let $q>1$, $\eps>0$.
There exists $C>0$ independent of $\al_i,\phi,\rho$ such that
\begin{equation*}
\|\Nrx(\phi)\|_{L^q(\Omega)}\le C\rho^{2(1-q)/q-\eps}
\exp\left\{\frac{1}{4\pi\eps}\|\phi\|^2\right\}\|\phi\|^2.
\end{equation*}
\end{lemma}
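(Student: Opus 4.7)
The plan is to reduce the estimate to three ingredients: a pointwise quadratic bound on $\Nrx(\phi)$, a three-factor Hölder decomposition, and the Moser--Trudinger inequality, together with the $L^p$-bound from Lemma~\ref{lem:deltachoice}. First, I would apply Lemma~\ref{lem:MVT} with $\psi=0$ to obtain the pointwise bound
\begin{equation*}
|\Nrx(\phi)(x)|\le |\fr''(\Wrx+\eta\theta\phi)(x)|\,\phi(x)^2
\end{equation*}
for some $\eta,\theta\in[0,1]$. Since $\fr''(t)=\rho^2(e^t-\tau\ga^2 e^{-\ga t})$, this yields
\begin{equation*}
|\Nrx(\phi)|\le \rho^2 e^{\Wrx}e^{|\phi|}\phi^2+\rho^2\tau\ga^2 e^{-\ga\Wrx}e^{\ga|\phi|}\phi^2.
\end{equation*}

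Next, I would handle each of the two summands symmetrically by Hölder's inequality with three exponents $r,s,t>1$ satisfying $1/r+1/s+1/t=1/q$. For the first summand,
\begin{equation*}
\|\rho^2 e^{\Wrx}e^{|\phi|}\phi^2\|_{L^q}\le \|\rho^2 e^{\Wrx}\|_{L^r}\,\|e^{|\phi|}\|_{L^s}\,\|\phi\|_{L^{2t}}^2.
\end{equation*}
The three factors are then estimated as follows. For the first, Lemma~\ref{lem:deltachoice} (combined with the explicit form of $w_1$, for which a direct scaling gives $\|e^{w_1}\|_{L^r}^r\sim \de_1^{2-2r}\sim \rho^{2-2r}$) yields $\|\rho^2e^{\Wrx}\|_{L^r}\le C\rho^{2(1-r)/r}$. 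For the second factor, the Moser--Trudinger inequality \eqref{ineq:MT} gives $\|e^{|\phi|}\|_{L^s}\le C\exp\{s\|\phi\|^2/(16\pi)\}$. For the third, Sobolev embedding gives $\|\phi\|_{L^{2t}}\le C\|\phi\|$ for every $t<\infty$.

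The key step is then to choose the exponents. I would take $1/r=1/q-\eps/2$, so that $2(1-r)/r=2(1-q)/q-\eps$ matches the target $\rho$-power, and $1/s=1/t=\eps/4$, so that $s=4/\eps$ and consequently $s/(16\pi)=1/(4\pi\eps)$ matches the exponent appearing in the target. The balance $1/r+1/s+1/t=1/q$ is then automatic. Combining the three bounds produces precisely
\begin{equation*}
\|\rho^2 e^{\Wrx}e^{|\phi|}\phi^2\|_{L^q}\le C\rho^{2(1-q)/q-\eps}\exp\{\|\phi\|^2/(4\pi\eps)\}\|\phi\|^2.
\end{equation*}
The second summand is treated in the same way, using the analogous estimate for $\|\rho^2\tau\ga e^{-\ga\Wrx}\|_{L^r}$ from Lemma~\ref{lem:deltachoice} and applying Moser--Trudinger to $\ga\phi$; the factor $\ga$ is absorbed into the constant $C$, which is allowed to depend on the fixed parameter $\ga$.

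I do not expect any real obstacle: the argument is routine bookkeeping of exponents. The only point requiring care is to verify that the power $\rho^{2(1-r)/r}$ is indeed the dominant contribution to $\|\rho^2 e^{\Wrx}\|_{L^r}$ (rather than the $\rho^{(2-r)/r}$ correction from Lemma~\ref{lem:deltachoice}), which follows since $2(1-r)/r<(2-r)/r<0$ for $r>1$.
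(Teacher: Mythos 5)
Your proof is correct and follows essentially the same route as the paper's: Lemma~\ref{lem:MVT} with $\psi=0$, the split of $\fr''$ into the $e^{W}$ and $e^{-\ga W}$ contributions, Lemma~\ref{lem:deltachoice} for $\|\rho^2 e^{\Wrx}\|$, Moser--Trudinger for the $e^{|\phi|}$ factor, and Sobolev embedding for $\phi^2$, with the identical choice of exponents (the paper merely applies Hölder in two sequential two-factor steps rather than your single three-factor step). One small slip in your last paragraph: $(2-r)/r<0$ fails for $1<r<2$, but the inequality you actually need, $2(1-r)/r<(2-r)/r$, holds for all $r>1$ and is what gives the dominance, so the conclusion is unaffected.
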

\begin{proof}
In view of Lemma~\ref{lem:MVT} with $\psi=0$, and observing that $\Nrx(0)=0$, we have
$$
|\Nrx(\phi)|\le|\fr''(\Wrx+\eta\theta\phi)|\,|\phi|^2
$$
for some $0\le\eta(x),\theta(x)\le1$.
Let $r,s>1$ satisfy $r^{-1}+s^{-1}=q^{-1}$.
Then, by H\"older's inequality and the Moser-Trudinger embedding~\eqref{ineq:MT},
\begin{equation*}
\|\Nrx(\phi)\|_q\le\|\fr''(\Wrx+\eta\theta\phi)\|_r\|\phi^2\|_s
\le C\|\fr''(\Wrx+\eta\theta\phi)\|_r\|\phi\|^2.
\end{equation*}
Since
\begin{equation}
\begin{aligned}
\fr(t)=&\rho^2\(e^{t}-\tau e^{-\ga t}\)\\
\fr'(t)=&\rho^2\(e^{t}+{\ga\tau}e^{-\ga t}\)\\
\fr''(t)=&\rho^2\(e^{t}-\ga^2\tau e^{-\ga t}\)
\end{aligned}
\end{equation}
we have
$$
|\fr''(\Wrx+\eta\theta\phi)|\le\rho^2\left(e^{\Wrx+\eta\theta\phi}
+\tau\ga^2e^{-\ga\Wrx-\ga\eta\theta\phi}\right).
$$
We estimate term by term.
In view of Lemma~\ref{lem:deltachoice}, we have
$$
\left\|{\rho^2}e^{\Wrx}-e^{w_1}\right\|_{L^p(\Omega)}^p\le C\rho^{2-p}.
$$
Let $t,v>1$ be such that $t^{-1}+v^{-1}=r^{-1}$.
In view of H\"older's inequality, the Moser-Trudinger embedding and Lemma~\ref{lem:deltachoice}, we have
\begin{equation*}
\begin{aligned}
\left\|{\rho^2}e^{\Wrx+\eta\theta\phi}\right\|_{L^r(\Omega)}
\le&\left\|{\rho^2}e^{\Wrx}\right\|_{L^t(\Omega)}\|e^{\eta\theta\phi}\|_{L^v(\Omega)}\\
\le&\(\|e^{w_1}\|_{L^t(\Omega)}+\left\|{\rho^2}e^{\Wrx}-e^{w_1}\right\|_{L^t(\Omega)}\)
\|e^{|\phi|}\|_{L^v(\Omega)}\\
\le&C(C_1\rho^{2(1-t)/t}+C_2\rho^{(2-t)/t})\exp\{{v\over16\pi}\|\phi\|^2\}\\
\le&C\rho^{2(1-t)/t}\exp\{{v\over16\pi}\|\phi\|^2\}.
\end{aligned}
\end{equation*}
Similarly, we estimate
\begin{equation}
\left\|{\rho^2\ga^2\tau}e^{-\ga\Wrx-\eta\theta\ga\phi}\right\|_{L^r(\Omega)}
\le C\rho^{2(1-t)/t}\exp\{{\ga^2v\over16\pi}\|\phi\|^2\}.
\end{equation}
We conclude from the above that
$$
\|f_\rho''(\Wrx+\eta\theta\phi)\|_{L^r(\Omega)}
\le C\rho^{2(1-t)/t}\exp\Big\{\frac{v}{16\pi}\|\phi\|^2\Big\},
$$
for every $t,v>1$ such that $t^{-1}+v^{-1}=r^{-1}$.
Choosing $s=v=4/\eps$, we obtain
$r^{-1}=q^{-1}-\eps/4$, $t^{-1}=r^{-1}-\eps/4=q^{-1}-\eps/2$
and consequently
\begin{equation*}
\begin{aligned}
&\frac{2(1-t)}{t}=\frac{2(1-q)}{q}-\eps,
&&\frac{2-t}{t}=\frac{2-q}{q}-\eps.
\end{aligned}
\end{equation*}
The asserted estimate is established.
\end{proof}
\begin{proof}[Proof of Proposition~\ref{prop:contraction}]
Recalling the definition of $\Trx$, we estimate:
\begin{equation*}
\begin{aligned}
\|\Trx(\phi)\|\le&\|(\Lrx)^{-1}\|\|\Pi\circ i^*(\Nrx(\phi)+\Rrx)\|
\le C_{\Lrx}|\log\rho|\(\|i^*(\Nrx(\phi))\|+\|i^*(\Rrx)\|\)\\
\le&C_{\Lrx}|\log\rho|\(c_q\|\Nrx(\phi)\|_q+c_p\|\Rrx\|_p\),
\end{aligned}
\end{equation*}
for any $p,q>1$.
It follows that
\begin{equation*}
\begin{aligned}
\|\Trx(\phi)\|&\le C_{\Lrx}|\log\rho|\times\\
&\quad\times\[\(C_1\rho^{2(1-q)/q-\eps}+C_2\rho^{(2-q)/q-\eps}\)
\exp\{\frac{\|\phi\|^2}{4\pi\eps}\}\|\phi\|^2
+C_3\rho^{(2-p)/p}\].
\end{aligned}
\end{equation*}
Consequently, if $\|\phi\|\le R_0|\log\rho|\rho^{(2-p)/p}$,
we have
\begin{equation*}
\begin{aligned}
\|\Trx(\phi)\|\le C_{\Lrx}|\log\rho|\rho^{(2-p)/p}
\[CR_0^2|\log\rho|^2\rho^{2(1-q)/q-\eps+(2-p)/p}+C_3\].
\end{aligned}
\end{equation*}
For any fixed $p\in(1,2)$ we may find $q>1$ and $\eps>0$
such that 
$$2(1-q)/q-\eps+(2-p)/p>0.
$$
Taking $R_0\ge 2C_{\Lrx}C_3$, we obtain
for sufficiently small $\rho$ that
\begin{equation}
\label{eq:TintoB}
\Trx(\mathcal B_{R_0|\log\rho|\rho^{(2-p)/p}})\subset\mathcal B_{R_0|\log\rho|\rho^{(2-p)/p}}.
\end{equation}
We are left to show that $\Trx$ is a contraction.
We have
\begin{align*}
\|\Trx(\phi)-\Trx(\psi)\|\le C_{\Lrx}|\log\rho|\|i^*[\Nrx(\phi)-\Nrx(\psi)]\|
\le C_{\Lrx}|\log\rho|\,c_q\|\Nrx(\phi)-\Nrx(\psi)\|_q.
\end{align*}
Recalling that
\begin{align*}
\fr''(\Wrx+&\eta[\theta\phi+(1-\theta)\psi])\\
=&\rho^2e^{\Wrx+\eta[\theta\phi+(1-\theta)\psi]}
-\rho^2\tau\ga^2e^{-\ga\Wrx-\ga\eta[\theta\phi+(1-\theta)\psi]}
\end{align*}
we estimate, similarly as above, for any $r,s>1$ such that $r^{-1}+(2s)^{-1}=q^{-1}$
\begin{align*}
\|\Nrx(\phi)&-\Nrx(\psi)\|_q\le\|\fr''(\Wrx+\eta[\theta\phi+(1-\theta)\psi])(|\phi|+|\psi|)(|\phi-\psi|)\|_q\\
\le&\{\|\rho^2e^{\Wrx+\eta[\theta\phi+(1-\theta)\psi]}\|_r
+\|\rho^2\tau\ga^2e^{-\ga\Wrx-\ga\eta[\theta\phi+(1-\theta)\psi]}\|_r\}\times\\
&\qquad\qquad\times(\|\phi\|_s+\|\psi\|_s)(\|\phi-\psi\|_s)\\
\le&C\{\|\rho^2e^{\Wrx+\eta[\theta\phi+(1-\theta)\psi]}\|_r
+\|\rho^2\tau\ga^2e^{-\ga\Wrx-\ga\eta[\theta\phi+(1-\theta)\psi]}\|_r\}\times\\
&\qquad\qquad\times(\|\phi\|+\|\psi\|)(\|\phi-\psi\|).
\end{align*}
Similarly as above, taking $t,v>1$ such that $t^{-1}+v^{-1}=r^{-1}$, we estimate
\begin{align*}
\|\rho^2e^{\Wrx+\eta[\theta\phi+(1-\theta)\psi]}\|_r
\le&\|\rho^2e^{\Wrx}\|_t\|e^{\eta[\theta\phi+(1-\theta)\psi]}\|_v\\
\le&(C_1\rho^{2(1-t)/t}+C_2\rho^{(2-t)/t})e^{\frac{v}{16\pi}\||\phi|+|\psi|\|^2}\\
\le&(C_1\rho^{2(1-t)/t}+C_2\rho^{(2-t)/t})e^{\frac{v}{16\pi}(\|\phi\|+\|\psi\|)^2}.
\end{align*}
Choosing $2s=v=4/\eps$ so that $q^{-1}=r^{-1}+\eps/4$ and $t^{-1}=q^{-1}+\eps/4=r^{-1}+\eps/2$,
we conclude that
\begin{align*}
\|\Nrx(\phi)-\Nrx(\psi)\|_q\le(C_1\rho^{2(1-q)-\eps}+\rho^{(2-q)/q-\eps})
e^{\frac{1}{4\pi\eps}\||\phi|+|\psi|\|^2}(\|\phi\|+\|\psi\|)(\|\phi-\psi\|).
\end{align*}
For $\phi,\psi\in\mathcal B_{R_0|\log\rho|\rho^{(2-p)/p}}$ we thus obtain
\begin{align*}
\|\Nrx(\phi)-\Nrx(\psi)\|_q\le|\log\rho|\rho^{(2-p)/p}(C_1\rho^{2(1-q)/q-\eps})
e^{\frac{1}{4\pi\eps}\||\phi|+|\psi|\|^2}(\|\phi-\psi\|).
\end{align*}
By choosing $q>1$ and $\eps>0$ such that $2(1-q)/q-\eps+(2-p)/p>0$, we obtain that for $\rho$
sufficiently small $\Trx$ is indeed a contraction in $\mathcal B_{R_0|\log\rho|\rho^{(2-p)/p}}$, as asserted.
\end{proof}
\begin{proof}[Proof of Proposition~\ref{prop:phi}]
In view of Proposition~\ref{prop:contraction}, there exists $\rho_0>0$ such that the fixed point 
problem~\eqref{prob:phi} admits a solution $\phi_\rho\in\mathcal B_{R_0|\log\rho|\rho^{(2-p)/p}}$
for any $p\in(1,2)$ and for any $\rho\in(0,\rho_0)$.
Correspondingly, we obtain a solution for \eqref{eq:equ1ans}, which in turn
yields a solution for \eqref{equ1} satisfying Ansatz~\eqref{ans}.
\end{proof}
%%%%%%%%%%%%%%%%%%%%%%%%%%%%%%%%%%%%%%%%%%%%%%%%%%%%%%%%%%%%%%%%%%%%%%%%%%%%%%%%%%%%%%%%%%%%%%%%%%%%%%%
%%%%%%%%%%%%%%%%%%%%%%%%%%%%%%%%%%%%%%%%%%%%%%%%%%%%%%%%%%%%%%%%%%%%%%%%%%%%%%%%%%%%%%%%%%%%%%%%%%%%%%%
\section{The reduced problem}
\label{sec:reducedpb}
%%%%%%%%%%%%%%%%%%%%%%%%%%%%%%%%%%%%%%%%%%%%%%%%%%%%%%%%%%%%%%%%%%%%%%%%%%%%%%%%%%%%%%%%%%%%%%%%%%%%%%%
%%%%%%%%%%%%%%%%%%%%%%%%%%%%%%%%%%%%%%%%%%%%%%%%%%%%%%%%%%%%%%%%%%%%%%%%%%%%%%%%%%%%%%%%%%%%%%%%%%%%%%%
In this section we obtain $\xi_1,\xi_2\in\Omega$ such that equation \eqref{equ2} is fulfilled,
thus concluding the proof of Theorem~\ref{thm:main}.
Recall from \eqref{def:F} that $\mathcal H_\ga$ is defined by
\begin{equation*}
    \mathcal H_\ga(\xi_1,\xi_2):=H(\xi_1,\xi_1)+\frac{H(\xi_2,\xi_2)}{\ga^2}
-\frac{2G(\xi_1,\xi_2)}{\ga}.
    \end{equation*}
We consider the Euler-Lagrange functional for \eqref{p}, given by
$$
J_\rho(u):={1\over2}\int_\Omega|\nabla u|^2\,dx
-{\rho^2}\int_\Omega e^{u}\,dx
-{\rho^2\tau} \int_\Omega e^{-\ga u}\,dx
$$
for $u\in H^1_0(\Omega)$.
Then $u\in H^1_0(\Omega)$ is a solution for \eqref{p} if and only if it
is a critical point for $J_\rho$.
We define the ``reduced functional" $\widetilde J_\rho:\mathcal F_2\Omega\to\rr$
by setting
\begin{equation}
\label{def:Jred}
\widetilde J_\rho(\xi_1,\xi_2):= J_\rho\(\Wrx+\phi_\rho\),
\end{equation}
where, for every $(\xi_1,\xi_2)\in\mathcal F_2\Omega$, the
function $\phi_\rho$ is the solution to \eqref{equ1} obtained in Proposition~\ref{prop:phi}.
\par
The main result in this section is given by the following.
\begin{prop}
\label{prop:Jredu}
The function $u=\Wrx+\phi$ is a solution to problem~\eqref{p}
if and only if $(\xi_1,\xi_2)\in\mathcal F_2\Omega$ is a critical point for $\widetilde J_\rho$.
Moreover,  the following expansion holds true:
\begin{equation}
\label{eq:Jexp}
\begin{aligned}
\widetilde J_\rho(\xi_1,\xi_2)
=&-8\pi\left[\Big(1+\frac{1}{\ga^2}\Big)\log\rho^2
+\Big(\log\frac{1}{8}+1\Big)+\frac{1}{\ga^2}\Big(\log\frac{\tau\ga}{8}+1\Big)+1+\frac{1}{\ga}\right]\\
&\quad-\frac{(8\pi)^2}{2}\mathcal H_\ga(\xi_1,\xi_2)+o(1),
\end{aligned}
\end{equation}
$C^1-$uniformly in compact sets of $\mathcal F_2\Omega$.
\end{prop}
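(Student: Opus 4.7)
The plan is to handle the two assertions of Proposition~\ref{prop:Jredu} in sequence: first the variational characterization of critical points, then the replacement of $\widetilde J_\rho$ by $J_\rho(W_\rho^\xi)$ up to $o(1)$, and finally the explicit computation of $J_\rho(W_\rho^\xi)$ via the expansions of Section~\ref{sec:ansatz}.

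\textbf{Critical point characterization.} By construction $u=W_\rho^\xi+\phi_\rho$ satisfies \eqref{equ1}, so $J_\rho'(u)=u-i^*[f_\rho(u)]\in K=\mathrm{span}\{P\psi^j_{\delta_i,\xi_i}\}$; write $J_\rho'(u)=\sum_{k,l}c_{kl}(\xi)\,P\psi^l_{\delta_k,\xi_k}$. Differentiating $\widetilde J_\rho(\xi)=J_\rho(u)$ in $\xi$ gives
\begin{equation*}
\partial_{\xi_i^j}\widetilde J_\rho(\xi)=\langle J_\rho'(u),\partial_{\xi_i^j}u\rangle=\sum_{k,l}c_{kl}(\xi)\,\langle P\psi^l_{\delta_k,\xi_k},\partial_{\xi_i^j}u\rangle.
\end{equation*}
Since $\partial_{\xi_i^j}W_\rho^\xi=-4\,P\psi^j_{\delta_i,\xi_i}+\text{l.o.t.}$ (the remainder coming from the $\xi$-dependence of $\delta_i$ via \eqref{eq:deltavalues} and of $\phi_\rho$), the almost-orthogonality relations \eqref{eq:psiorthog} show that the linear system for the $c_{kl}$ is diagonally dominant and hence invertible. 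Consequently $\nabla\widetilde J_\rho(\xi)=0$ forces $c_{kl}\equiv 0$, i.e.\ \eqref{equ2} holds.

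\textbf{Reduction to $J_\rho(W_\rho^\xi)$.} A Taylor expansion at $W_\rho^\xi$, combined with the identity $J_\rho'(W_\rho^\xi)=-i^*(R_\rho^\xi)$, yields
\begin{equation*}
\widetilde J_\rho(\xi)=J_\rho(W_\rho^\xi)-\langle i^*(R_\rho^\xi),\phi_\rho\rangle+\tfrac12\langle J_\rho''(W_\rho^\xi+t\phi_\rho)\phi_\rho,\phi_\rho\rangle
\end{equation*}
for some $t\in(0,1)$. The linear remainder is bounded by $c_p\|R_\rho^\xi\|_p\|\phi_\rho\|\le C\rho^{2(2-p)/p}|\log\rho|$ via Lemma~\ref{lem:deltachoice} and Proposition~\ref{prop:phi}, while the quadratic remainder is controlled by a H\"older--Moser--Trudinger bound analogous to Lemma~\ref{lem:Nqeps}. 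Taking $p$ sufficiently close to $1$ makes both contributions $o(1)$. The $C^1$-part of the convergence requires the differentiability of $\xi\mapsto\phi_\rho(\xi)$, which follows from applying the implicit function theorem to the fixed-point equation \eqref{prob:phi} and using the uniform invertibility estimate of Proposition~\ref{prop:Linvert}; this $C^0\to C^1$ upgrade is the main technical obstacle.

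\textbf{Explicit expansion of $J_\rho(W_\rho^\xi)$.} Writing $W_\rho^\xi=Pw_1-\gamma^{-1}Pw_2$ and integrating by parts via $-\Delta Pw_i=e^{w_i}$, the Dirichlet part splits as
\begin{equation*}
\tfrac12\int_\Omega|\nabla W_\rho^\xi|^2=\tfrac12\int_\Omega e^{w_1}Pw_1+\tfrac{1}{2\gamma^2}\int_\Omega e^{w_2}Pw_2-\tfrac{1}{\gamma}\int_\Omega e^{w_1}Pw_2.
\end{equation*}
Substituting the expansion of Lemma~\ref{pro-exp}, together with the standard bubble integrals for $\int_{\mathbb R^2}e^{w_{\delta,\xi}}$ and $\int_{\mathbb R^2}e^{w_{\delta,\xi}}w_{\delta,\xi}$, and using $\int_\Omega e^{w_1}Pw_2=(8\pi)^2 G(\xi_1,\xi_2)+o(1)$, produces a formula in which $H(\xi_1,\xi_1)$, $H(\xi_2,\xi_2)$, $G(\xi_1,\xi_2)$ appear with coefficients $\tfrac{(8\pi)^2}{2}$, $\tfrac{(8\pi)^2}{2\gamma^2}$, $-\tfrac{(8\pi)^2}{\gamma}$ respectively, together with constants proportional to $\log(8\delta_1^2)$ and $\log(8\delta_2^2)$. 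For the nonlinear terms, Lemma~\ref{lem:deltachoice} gives $\rho^2\int_\Omega e^{W_\rho^\xi}=8\pi+o(1)$ and $\rho^2\tau\int_\Omega e^{-\gamma W_\rho^\xi}=8\pi/\gamma+o(1)$. Inserting the explicit values \eqref{eq:deltavalues} of $\delta_1,\delta_2$ converts the $\log\delta_i^2$ terms into $\log\rho^2$ contributions (with total coefficient $-8\pi(1+\gamma^{-2})$) plus further $H(\xi_i,\xi_i)$ and $G(\xi_1,\xi_2)$ pieces. These recombine with the Dirichlet contributions to produce exactly $-\tfrac{(8\pi)^2}{2}\mathcal H_\gamma(\xi_1,\xi_2)$, while the purely numerical leftovers assemble into the bracketed constant of \eqref{eq:Jexp}. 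The main bookkeeping challenge is keeping track of the cancellations in the $\log\delta_i$ substitution and the recombination of the $H,G$ contributions.
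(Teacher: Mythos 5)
Your proposal follows essentially the same route as the paper: the critical-point characterization via projecting $J_\rho'(u)$ onto $K$ and invoking the near-orthogonality relations \eqref{eq:psiorthog}, the reduction $\widetilde J_\rho(\xi)=J_\rho(W_\rho^\xi)+o(1)$, and the explicit Dirichlet-energy expansion through $\int e^{w_i}Pw_i$, $\int e^{w_1}Pw_2$ and the bubble integrals (the paper packages exactly this bookkeeping in Lemmas~\ref{lem:bubbleintegral}--\ref{lem:masses}). Your Taylor expansion of the reduction step is in fact somewhat more explicit than the paper's one-line claim $\widetilde J_\rho = J_\rho(W_\rho^\xi)+O(\|\phi_\rho\|)$, and you correctly identify that the $C^1$-upgrade rests on an implicit-function argument combined with the invertibility of $\mathcal L_\rho^\xi$, as the paper also defers to \cite{EGP}. (Minor slip: $\partial_{\xi_1^j}W_\rho^\xi = 4P\psi_1^j+\text{l.o.t.}$ with a $+$ sign, not $-4$, but this does not affect the argument since only the nonvanishing of the coefficient is used.)
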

We first establish some lemmas.
\begin{lemma}
\label{lem:bubbleintegral}
For any $\de>0$ and $\xi\in\rr^2$, the Liouville bubble $w_\de$ satisfies
\begin{itemize}
  \item [(i)]
$\int_{B_\eps(\xi)}e^{w_\de}w_\de\,dx=8\pi(\log(8\de^{-2})-2)+O(\de^2\log\de)$
\item[(ii)]
$\int_{B_\eps(\xi)}e^{w_\de}Pw_\de\,dx
=8\pi(-2\log(\de^2)+8\pi H(\xi,\xi)-2)+O(\de)$.
\end{itemize}
for any fixed $\eps>0$.
\end{lemma}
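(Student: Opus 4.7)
The plan is to prove both identities by explicit computation after the rescaling $y=(x-\xi)/\de$. Since $w_\de(x)=\log 8+\log\de^2-2\log(\de^2+|x-\xi|^2)$, this change of variables yields $e^{w_\de(x)}=8[\de^2(1+|y|^2)^2]^{-1}$ and $w_\de(x)=\log(8\de^{-2})-2\log(1+|y|^2)$, while $B_\eps(\xi)$ becomes $B_{\eps/\de}(0)$, whose radius tends to $+\infty$ as $\de\to 0$.

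For part (i), substituting and splitting yields
\begin{equation*}
\int_{B_\eps(\xi)}e^{w_\de}w_\de\,dx=8\log\(8\de^{-2}\)\int_{B_{\eps/\de}(0)}\frac{dy}{(1+|y|^2)^2}-16\int_{B_{\eps/\de}(0)}\frac{\log(1+|y|^2)}{(1+|y|^2)^2}\,dy.
\end{equation*}
Via the substitution $s=1+|y|^2$ in polar coordinates one checks that $\int_{\rr^2}(1+|y|^2)^{-2}\,dy=\pi$ and $\int_{\rr^2}\log(1+|y|^2)(1+|y|^2)^{-2}\,dy=\pi$ (the latter by a single integration by parts). The errors from truncating to $B_{\eps/\de}(0)$ are direct tail estimates: the first is $O((\de/\eps)^2)$ and the second, by explicit integration, is $O((\de/\eps)^2\log(\eps/\de))=O(\de^2|\log\de|)$. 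Combining these pieces gives exactly the asserted formula $8\pi(\log(8\de^{-2})-2)+O(\de^2\log\de)$.

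For part (ii), I would invoke Lemma~\ref{pro-exp} to write $Pw_\de(x)=w_\de(x)-\log(8\de^2)+8\pi H(x,\xi)+O(\de^2)$ and integrate term by term against $e^{w_\de}$ over $B_\eps(\xi)$. The $w_\de$ piece is supplied by part (i); the $-\log(8\de^2)$ piece contributes $-8\pi\log(8\de^2)+O(\de^2|\log\de|)$ since the same tail argument gives $\int_{B_\eps(\xi)}e^{w_\de}\,dx=8\pi+O(\de^2)$; and the $O(\de^2)$ correction from Lemma~\ref{pro-exp} is absorbed into the remainder. For the $H$-term I would split $H(x,\xi)=H(\xi,\xi)+[H(x,\xi)-H(\xi,\xi)]$ and use smoothness of $H$ near $\xi$ to bound $|H(x,\xi)-H(\xi,\xi)|\le C|x-\xi|$; rescaling reduces $\int_{B_\eps(\xi)}e^{w_\de}|x-\xi|\,dx$ to $8\de\int_{B_{\eps/\de}(0)}|y|(1+|y|^2)^{-2}\,dy$, and since the integrand decays like $|y|^{-3}$ this integral is uniformly bounded, producing an $O(\de)$ contribution. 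Thus $8\pi\int_{B_\eps(\xi)}e^{w_\de}H(x,\xi)\,dx=64\pi^2H(\xi,\xi)+O(\de)$. Summing the four pieces and simplifying recovers the right-hand side of (ii), with the $O(\de)$ remainder dominating.

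The main (minor) delicacy is bookkeeping of the constants: the $\log 8$ implicit in the $w_\de$ piece coming from part (i) must cancel exactly against the $\log 8$ appearing in $-\log(8\de^2)\int_{B_\eps(\xi)}e^{w_\de}\,dx$, so that the final expression depends only on $\log\de^2$ as stated. Beyond this arithmetic care, the whole argument is a routine rescaling paralleling standard Liouville-bubble estimates, e.g.\ those in \cite{EGP}.
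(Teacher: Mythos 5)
Your proposal is correct and follows essentially the same line as the paper's own proof: rescale by $y=(x-\xi)/\de$, split the logarithm, use the identity $\int_{\rr^2}(1+|y|^2)^{-2}\,dy=\pi=\int_{\rr^2}\log(1+|y|^2)(1+|y|^2)^{-2}\,dy$, and for (ii) integrate the expansion of $Pw_\de$ from Lemma~\ref{pro-exp} term by term, using Lipschitz continuity of $H(\cdot,\xi)$ to control the $O(\de)$ remainder. You only supply slightly more detail on the tail estimates, but nothing in the structure differs.
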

\begin{proof}
Proof of (i).
We use the following identity, which is readily obtained by an integration by parts,
see also \cite{EGP}.
\begin{equation*}
\int_{\rr^2}\frac{1}{(1+|y|^2)^2}\log(1+|y|^2)\,dy=\pi
=\int_{\rr^2}\frac{1}{(1+|y|^2)^2}\,dy.
\end{equation*}
We compute
\begin{align*}
\int_{B_\eps(\xi)}e^{w_\de}w_\de\,dx
=&\int_{B_\eps(\xi)}\frac{8\de^2}{(\de^2+|x-\xi|^2)^2}\log\frac{8\de^2}{(\de^2+|x-\xi|^2)^2}\,dx\\
=&\int_{B_{\eps/\de}(0)}\frac{8}{\de^2(1+|y|^2)^2}\log\left(\frac{8}{\de^2(1+|y|^2)^2}\right)\,\de^2dy\\
=&8\log\frac{8}{\de^2}\int_{B_{\eps/\de}(0)}\frac{dy}{(1+|y|^2)^2}
-16\int_{B_{\eps/\de}(0)}\frac{1}{(1+|y|^2)^2}\log(1+|y|^2)\,dy\\
=&8\pi(\log(8\de^{-2})-2)+O(\de^2\log\de).
\end{align*}
\par
Proof of (ii). Recalling the expansion of $Pw_\de$, we have
\begin{align*}
\int_{B_\eps(\xi)}e^{w_\de}Pw_\de\,dx
=&\int_{B_\eps(\xi)}e^{w_\de}w_\de\,dx+(-\log(8\de^2)+8\pi H(\xi,\xi))\int_{B_\eps(\xi)}e^{w_\de}\,dx\\
&\quad+\int_{B_\eps(\xi)}e^{w_\de}O(|x-\xi|)\,dx+O(\de^2)\\
=&8\pi(\log(8\de^{-2})-2)+8\pi(-\log(8\de^2)+8\pi H(\xi,\xi))+O(\de)\\
=&8\pi(-2\log(\de^2)+8\pi H(\xi,\xi)-2)+O(\de).
\end{align*}
\end{proof}
Let
\begin{equation}
\label{def:ci}
c_1=-2\Big(\log\frac{1}{8}+1\Big),
\qquad
c_2=-2\Big(\log\frac{\tau\ga}{8}+1\Big).
\end{equation}
Using Lemma~\ref{lem:bubbleintegral} we readily derive the following. 
\begin{lemma}
\label{lem:bubbleintwithdelta}
Let $w_1=w_{\de_1,\xi_1}$, $w_2=w_{\de_2,\xi_2}$, with $\de_1,\de_2$ given by \eqref{eq:deltavalues}.
Then,
\begin{align*}
\int_{B_\eps(\xi_1)}e^{w_1}Pw_1=&8\pi(-2\log\rho^2-8\pi[H(\xi_1,\xi_1)-\frac{2}{\ga}\,G(\xi_1,\xi_2)]+c_1)+O(\rho)\\
\int_{B_\eps(\xi_2)}e^{w_2}Pw_2=&8\pi(-2\log\rho^2-8\pi[H(\xi_2,\xi_2)-2\ga\,G(\xi_1,\xi_2)]+c_2)+O(\rho)
\end{align*}
where the constants $c_i$, $i=1,2$, are defined in \eqref{def:ci}.
\end{lemma}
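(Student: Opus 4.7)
The plan is to substitute the explicit values of $\delta_1^2,\delta_2^2$ given by \eqref{eq:deltavalues} into the expansion already furnished by Lemma~\ref{lem:bubbleintegral}(ii), and then regroup the constants so that the right-hand side takes the form claimed. The lemma is essentially a bookkeeping statement: it records how, under the specific choice of scales dictated by the Ansatz, the generic integral $\int e^{w_i}Pw_i$ encodes the Hamiltonian data $H(\xi_i,\xi_i)$ and $G(\xi_1,\xi_2)$.

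First, applying Lemma~\ref{lem:bubbleintegral}(ii) with $w_i=w_{\delta_i,\xi_i}$ yields
\begin{equation*}
\int_{B_\eps(\xi_i)}e^{w_i}Pw_i\,dx=8\pi\bigl(-2\log(\delta_i^2)+8\pi H(\xi_i,\xi_i)-2\bigr)+O(\delta_i).
\end{equation*}
From \eqref{eq:deltavalues} each $\delta_i$ has the same decay rate as $\rho$, so the error term is $O(\rho)$. It remains to compute $-2\log(\delta_i^2)$ explicitly. For the first bubble,
\begin{equation*}
-2\log(\delta_1^2)=-2\log\rho^2+2\log 8-16\pi H(\xi_1,\xi_1)+\frac{16\pi}{\ga}G(\xi_1,\xi_2);
\end{equation*}
combining this with the $+8\pi H(\xi_1,\xi_1)-2$ term from the lemma collapses the self-interaction to $-8\pi[H(\xi_1,\xi_1)-\tfrac{2}{\ga}G(\xi_1,\xi_2)]$, while the leftover constants $2\log 8-2$ coincide with $c_1=-2(\log(1/8)+1)$ by \eqref{def:ci}. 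The second identity is entirely analogous: from $\delta_2^2$ one extracts
\begin{equation*}
-2\log(\delta_2^2)=-2\log\rho^2-2\log(\tau\ga/8)-16\pi H(\xi_2,\xi_2)+16\pi\ga G(\xi_1,\xi_2),
\end{equation*}
and the residual constants $-2\log(\tau\ga/8)-2$ regroup into $c_2=-2(\log(\tau\ga/8)+1)$ by its definition.

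There is no genuine obstacle; the only point that requires attention is the cross term $\tfrac{2}{\ga}G(\xi_1,\xi_2)$ (respectively $2\ga G(\xi_1,\xi_2)$), which is the fingerprint left on $\delta_1$ (respectively $\delta_2$) by the presence of the other projected bubble $Pw_2/\ga$ near $\xi_1$ (respectively $Pw_1$ near $\xi_2$) in $\Wrx$. Tracking its sign correctly through the logarithm is exactly what produces, after dividing the sum $\int e^{w_1}Pw_1+\ga^{-2}\int e^{w_2}Pw_2$ by a suitable factor, the combination $H(\xi_1,\xi_1)+\ga^{-2}H(\xi_2,\xi_2)-2\ga^{-1}G(\xi_1,\xi_2)=\mathcal H_\ga(\xi_1,\xi_2)$, which is the whole point of this lemma as a stepping stone to the expansion \eqref{eq:Jexp} of $\widetilde J_\rho$.
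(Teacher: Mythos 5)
Your proof is correct and follows essentially the same route as the paper's: substitute the values of $\delta_i^2$ from \eqref{eq:deltavalues} into the generic expansion and regroup constants to recognize $c_1,c_2$. The only (cosmetic) difference is that you cite Lemma~\ref{lem:bubbleintegral}(ii) directly, whereas the paper re-derives that identity inline from part~(i) together with Lemma~\ref{pro-exp} before inserting the explicit $\delta_i^2$; your version is slightly more economical and equally valid.
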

\begin{proof}
We compute, recalling Lemma~\ref{lem:bubbleintegral}, \eqref{pro-exp} 
and the definition of $\de_1$ in \eqref{eq:deltavalues}:
\begin{align*}
\int_{B_\eps(\xi_1)}e^{w_1}Pw_1
=&\int_{B_\eps(\xi_1)}e^{w_1}[w_1-\log(8\de_1^2)+8\pi H(\xi_1,\xi_1)+O(|x-\xi_1|+\rho^2)]\\
=&\int_{B_\eps(\xi_1)}e^{w_1}w_1+[-\log(8\de_1^2)+8\pi H(\xi_1,\xi_1)]\int_{B_\eps(\xi_1)}e^{w_1}\\
&\qquad+\int_{B_\eps(\xi_1)}e^{w_1}O(|x-\xi_1|+\rho^2)\\
=&8\pi[\log(\frac{8}{\de_1^2})-2]+8\pi[-\log(8\de_1^2)+8\pi H(\xi_1,\xi_1)]+O(\rho)\\
=&8\pi[-2\log\de_1^2-2+8\pi H(\xi_1,\xi_1)]+O(\rho)\\
=&8\pi[-2\log\Big(\frac{\rho^2}{8}e^{8\pi H(\xi,\xi)-\frac{8\pi}{\ga}G(\xi_1,\xi_2)}\Big)-2+8\pi H(\xi_1,\xi_1)]+O(\rho)\\
=&8\pi[-2\log\rho^2-8\pi H(\xi_1,\xi_1)+\frac{16\pi}{\ga}G(\xi_1,\xi_2)-2\log 8-2].
\end{align*}
This yields the expansion~(i). Expansion~(ii) is derived similarly.
\end{proof}
\begin{lemma}
\label{lem:JW}
The following expansion holds
\begin{align*}
\int_\Omega|\nabla\Wrx|^2\,dx=8\pi\left[-2(1+\frac{1}{\ga^{2}})\log\rho^2+c_1+\frac{c_2}{\ga^2}\right]
-(8\pi)^2\mathcal H_\ga(\xi_1,\xi_2)+O(\rho),
\end{align*}
where $c_i$, $i=1,2$, are defined in \eqref{def:ci}.
\end{lemma}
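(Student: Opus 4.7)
The plan is to reduce $\int_\Omega|\nabla\Wrx|^2\,dx$ to three weighted integrals against the concentrated measures $e^{w_i}\,dx$ and then invoke Lemma~\ref{lem:bubbleintwithdelta}. Expanding $\Wrx=Pw_1-Pw_2/\ga$ gives
\[
\int_\Omega|\nabla\Wrx|^2\,dx=\int_\Omega|\nabla Pw_1|^2+\frac{1}{\ga^2}\int_\Omega|\nabla Pw_2|^2-\frac{2}{\ga}\int_\Omega\nabla Pw_1\cdot\nabla Pw_2,
\]
and since $Pw_i\in H_0^1(\Omega)$ with $-\Delta Pw_i=-\Delta w_i=e^{w_i}$ in $\Omega$, an integration by parts converts these into $\int_\Omega e^{w_1}Pw_1$, $\int_\Omega e^{w_2}Pw_2$ and $\int_\Omega e^{w_1}Pw_2$ respectively.

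For the two diagonal integrals, Lemma~\ref{lem:bubbleintwithdelta} supplies the asymptotics on $B_\eps(\xi_i)$; the passage to $\Omega$ is cheap since on the complement $e^{w_i(x)}\le C\rho^2$ while $Pw_i$ is controlled by Lemma~\ref{pro-exp}, so the missing piece is at most $O(\rho^2|\log\rho|)$. For the cross term, which is the only genuinely new estimate, I would exploit condition \eqref{xi}: picking $\eps<\eta/2$ keeps $B_\eps(\xi_1)$ bounded away from $\xi_2$, and then Lemma~\ref{pro-exp} gives $Pw_2(x)=8\pi G(x,\xi_2)+O(\rho^2)=8\pi G(\xi_1,\xi_2)+O(|x-\xi_1|)+O(\rho^2)$ uniformly on $B_\eps(\xi_1)$. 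Together with $\int_{\rr^2}e^{w_1}=8\pi$, the tail bound $\int_{\Omega\setminus B_\eps(\xi_1)}e^{w_1}=O(\rho^2)$, and the rescaling $y=(x-\xi_1)/\de_1$ yielding $\int e^{w_1}|x-\xi_1|\,dx=O(\de_1)=O(\rho)$, this produces
\[
\int_\Omega e^{w_1}Pw_2\,dx=(8\pi)^2G(\xi_1,\xi_2)+O(\rho).
\]

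To finish, I would substitute the three asymptotics into the expansion and collect terms. The $\log\rho^2$ contributions sum to $-16\pi(1+\ga^{-2})\log\rho^2$ and the $c_i$--constants to $8\pi(c_1+c_2/\ga^2)$, giving the prefactor in the claim. The self-interactions produce $-(8\pi)^2H(\xi_1,\xi_1)-(8\pi)^2H(\xi_2,\xi_2)/\ga^2$, while $G(\xi_1,\xi_2)$ receives three contributions: $+2(8\pi)^2/\ga$ from $\int e^{w_1}Pw_1$ (via the $\tfrac{2}{\ga}G(\xi_1,\xi_2)$ factor inside Lemma~\ref{lem:bubbleintwithdelta}, itself stemming from the definition \eqref{eq:deltavalues} of $\de_1$), another $+2(8\pi)^2/\ga$ from $\ga^{-2}\int e^{w_2}Pw_2$ (analogously), and $-2(8\pi)^2/\ga$ from the cross term; their sum is $+2(8\pi)^2G(\xi_1,\xi_2)/\ga$. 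Altogether the geometric part reads exactly $-(8\pi)^2\mathcal H_\ga(\xi_1,\xi_2)$. The main (and essentially only) obstacle is verifying this three-way cancellation, which is the same algebraic balance built into \eqref{eq:deltavalues} that made $\Rrx$ small in Lemma~\ref{lem:deltachoice}; it is precisely this choice of $\de_1,\de_2$ that forces the coefficient of $G(\xi_1,\xi_2)$ in the reduced energy to match the $-2/\ga$ appearing in $\mathcal H_\ga$.
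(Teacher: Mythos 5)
Your proposal is correct and follows essentially the same route as the paper: expand the square, integrate by parts against $-\Delta Pw_i=e^{w_i}$, apply Lemma~\ref{lem:bubbleintwithdelta} to the two diagonal integrals and Lemma~\ref{pro-exp} to the cross term $\int_\Omega e^{w_1}Pw_2=(8\pi)^2G(\xi_1,\xi_2)+O(\rho)$, then collect coefficients. Your explicit tracking of the three $G(\xi_1,\xi_2)$ contributions and of the tail integrals is a slightly more careful rendering of the same computation; the paper states the cross-term expansion a bit loosely without separating $B_\eps(\xi_1)$ from its complement, and your version fills that small gap, but the method and all substantive estimates coincide.
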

\begin{proof}
We have
\begin{equation*}
\int_\Omega|\nabla\Wrx|^2\,dx=\int_\Omega|\nabla Pw_1|^2\,dx
+\frac{1}{\ga^2}\int_\Omega|\nabla Pw_2|^2\,dx
-\frac{2}{\ga}\int_\Omega\nabla Pw_1\cdot\nabla Pw_2\,dx.
\end{equation*}
Integrating by parts, we obtain
\begin{align*}
\int_\Omega|\nabla Pw_i|^2\,dx=\int_\Omega(-\Delta Pw_i)Pw_i\,dx
=\int_\Omega e^{w_i}Pw_i\,dx,
\end{align*}
for $i=1,2$.
In view of Lemmas~\ref{lem:bubbleintegral}--\ref{lem:bubbleintwithdelta},
and observing that 
\begin{align*}
\int_\Omega\nabla Pw_1\cdot\nabla Pw_2\,dx=&\int_\Omega(-\Delta Pw_1)\,Pw_2\,dx=\int_\Omega e^{w_1}Pw_2\,dx\\
=&\int_\Omega e^{w_1}(8\pi G(\xi_1,\xi_2)+O(|x-\xi_1|)+O(\rho^2))\\
=&(8\pi)^2G(\xi_1,\xi_2)+O(\rho),
\end{align*}
we derive the asserted expansion.
\end{proof}
\begin{lemma}
\label{lem:masses}
The following asymptotics hold, as $\rho\to0$:
\begin{align*}
&\rho^2\int_\Omega e^{\Wrx}\,dx=8\pi+o(\rho);
&&\tau\rho^2\int_\Omega e^{-\ga\Wrx}=\frac{8\pi}{\ga}+o(\rho).
\end{align*}
\end{lemma}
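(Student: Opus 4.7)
The plan is to establish each asymptotic by the same localization-and-cancellation scheme; I only describe the first one since the second is entirely analogous (with $w_1,\xi_1,\delta_1$ replaced by $w_2,\xi_2,\delta_2$ and an extra factor of $\tau\ga$).

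First I would decompose $\int_\Omega e^{\Wrx}\,dx$ into contributions from $B_\eps(\xi_1)$ and its complement for some small fixed $\eps>0$. On the complement, both projections $Pw_1$ and $Pw_2$ are uniformly bounded by Lemma~\ref{pro-exp}, so $\rho^2 e^{\Wrx}=O(\rho^2)$ there, contributing $O(\rho^2)$. The bulk of the mass comes from $B_\eps(\xi_1)$, where I plug in the expansion of $Pw_1$ and $Pw_2$ from Lemma~\ref{pro-exp} and substitute the value \eqref{eq:deltavalues} of $\delta_1$ to get, after the cancellation $\rho^2/(8\delta_1^2)=\exp\{-8\pi H(\xi_1,\xi_1)+\frac{8\pi}{\ga}G(\xi_1,\xi_2)\}$,
\begin{equation*}
\rho^2 e^{\Wrx(x)}=e^{w_1(x)}\exp\bigl\{A(x)+O(\rho^2)\bigr\},
\end{equation*}
where $A(x):=8\pi[H(x,\xi_1)-H(\xi_1,\xi_1)]-\tfrac{8\pi}{\ga}[G(x,\xi_2)-G(\xi_1,\xi_2)]$ is smooth in $x$ near $\xi_1$ with $A(\xi_1)=0$.

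Next, I would Taylor expand $A(x)=\vec c\cdot(x-\xi_1)+O(|x-\xi_1|^2)$ with $\vec c=8\pi\nabla_x H(\xi_1,\xi_1)-\tfrac{8\pi}{\ga}\nabla_x G(\xi_1,\xi_2)$, so that
\begin{equation*}
\rho^2 e^{\Wrx(x)}-e^{w_1(x)}=e^{w_1(x)}\bigl[\vec c\cdot(x-\xi_1)+O(|x-\xi_1|^2)+O(\rho^2)\bigr].
\end{equation*}
The key point is that $e^{w_1}$ is radially symmetric about $\xi_1$, so $\int_{B_\eps(\xi_1)}e^{w_1}(x-\xi_1)\,dx=0$ and the leading linear term integrates to zero. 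Rescaling $y=(x-\xi_1)/\delta_1$, the remaining quadratic piece contributes
\begin{equation*}
\int_{B_\eps(\xi_1)}\!\!\frac{8\delta_1^2\,|x-\xi_1|^2}{(\delta_1^2+|x-\xi_1|^2)^2}\,dx
=8\delta_1^2\!\int_{B_{\eps/\delta_1}}\!\!\frac{|y|^2}{(1+|y|^2)^2}\,dy=O(\rho^2|\log\rho|),
\end{equation*}
and the $O(\rho^2)$ error contributes $O(\rho^2)$ after multiplying by $\int e^{w_1}=O(1)$. Finally, a straightforward rescaling computation gives $\int_\Omega e^{w_1}\,dx=8\pi+O(\rho^2)$, where the $O(\rho^2)$ accounts for the tail outside $B_\eps(\xi_1)$. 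Summing these contributions yields $\rho^2\int_\Omega e^{\Wrx}\,dx=8\pi+O(\rho^2|\log\rho|)=8\pi+o(\rho)$, as asserted.

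The only delicate point is that the crude estimate from Lemma~\ref{lem:deltachoice} with $p=1$ only gives $O(\rho)$, which is not sharp enough; one has to exploit the \emph{oddness} of the first-order correction $\vec c\cdot(x-\xi_1)$ against the radial density $e^{w_1}$ to upgrade the bound to $O(\rho^2|\log\rho|)=o(\rho)$. The second asymptotic is proved by the identical argument, exchanging the roles of the two bubbles and using the identity $\rho^2\tau\ga/(8\delta_2^2)=\exp\{-8\pi H(\xi_2,\xi_2)+8\pi\ga G(\xi_1,\xi_2)\}$ from \eqref{eq:deltavalues} to cancel the exponential prefactors near $\xi_2$.
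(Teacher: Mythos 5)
Your proof is correct and takes the same basic route as the paper's: localize to a ball around $\xi_1$, substitute the expansion of $Pw_1$ and $Pw_2$ from Lemma~\ref{pro-exp}, and use the choice \eqref{eq:deltavalues} of $\delta_1$ to cancel the exponential prefactors. However, you have in fact made a step rigorous that the paper's proof leaves entirely implicit. The paper reduces to $\int_{B_\eps(\xi_1)} e^{w_1+O(\rho^2+|x-\xi_1|)}$ and then simply asserts this equals $8\pi+o(\rho)$. As you correctly observe, the naive bound on the $O(|x-\xi_1|)$ term, namely $\int_{B_\eps(\xi_1)}e^{w_1}|x-\xi_1|\,dx = O(\delta_1) = O(\rho)$, is not sufficient to give $o(\rho)$. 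Your resolution --- Taylor-expanding the smooth exponent $A(x)=\vec c\cdot(x-\xi_1)+O(|x-\xi_1|^2)$ and noting that the linear term integrates to zero against the radially symmetric density $e^{w_1}$, so that only the $O(\rho^2|\log\rho|)$ quadratic remainder survives --- is exactly the missing ingredient, and it is correct: $\int_{B_\eps(\xi_1)}e^{w_1}|x-\xi_1|^2\,dx = 8\delta_1^2\int_{B_{\eps/\delta_1}}|y|^2(1+|y|^2)^{-2}\,dy = O(\rho^2|\log\rho|) = o(\rho)$. The treatment of the tail $\Omega\setminus B_\eps(\xi_1)$ and of the second integral is likewise correct and parallels the paper.
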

\begin{proof}
We compute:
\begin{align*}
\rho^2\int_{B_\eps(\xi_1)}e^{\Wrx}
=&\rho^2\int_{B_\eps(\xi_1)}e^{w_1-\log(8\de_1^2)+8\pi H(\xi_1,\xi_1)-\frac{8\pi}{\ga}G(\xi_1,\xi_2)+O(\rho^2+|x-\xi_1|)}\\
=&\int_{B_\eps(\xi_1)}e^{w_1+O(\rho^2+|x-\xi|)}=8\pi+o(\rho).
\end{align*}
Similarly, we have
\begin{align*}
\tau\rho^2\int_{B_\eps(\xi_2)}e^{-\ga\Wrx}
=&\tau\rho^2\int_{B_\eps(\xi_2)}e^{w_2-\log(8\de_2^2)+8\pi H(\xi_2,\xi_2)-8\pi\ga\,G(\xi_1,\xi_2)+O(\rho^2+|x-\xi_2|)}\\
=&\frac{1}{\ga}\int_{B_\eps(\xi_2)}e^{w_2+O(\rho^2+|x-\xi_2|)}=\frac{8\pi}{\ga}+o(\rho).
\end{align*}
\end{proof}
\begin{proof}[Proof of Proposition~\ref{prop:Jredu}]
Similarly as in \cite{BP, EGP}, we readily check that 
$$
\widetilde J_\rho(\xi_1,\xi_2)=J_\rho(\Wrx)+O(\|\phi_\rho\|)
$$
in $C^0$, on compact subsets of $\mathcal F_2\Omega$.
In turn, Lemma~\ref{lem:JW} yields the $C^0-$ uniform 
convergence of $\widetilde J_\rho$ to the functional on the r.h.s.\ of \eqref{eq:Jexp} 
on compact subsets of $\mathcal F_2\Omega$.
The $C^1-$ uniform convergence on compact subsets of $\mathcal F_2\Omega$
may be then derived by a step-by-step adaptation of the arguments in \cite{EGP},
which rely on an implicit function argument and on the invertibility 
the operator $\mathcal L_\rho^\xi$ as stated in Proposition~\ref{prop:Linvert}.
\par
We are left to show that
critical points of $\widetilde J_\rho$ correspond to
critical points of $J_\rho$.
To this end, we observe that since $u_\rho$ satisfies  \eqref{equ1}, 
there exist constants $c_{ih}$, $i,h=1,2$
such that
\begin{equation}
u_\rho-i^*[f_\rho(u_\rho)]=\sum_{i,h=1}^2c_{ih}P\psi_i^h,
\end{equation}
where the functions $\psi_i^h$ are defined in \eqref{def:psi}.
Therefore, we may write
\begin{equation}
\label{eq:tildeJexp}
\begin{aligned}
\partial_{\xi_{11}}\widetilde J_\rho(\xi_1,\xi_2)=\langle J_\rho'(u_\rho),\partial_{\xi_{11}}u_\rho\rangle
=&(u_\rho-i^*[f_\rho(u_\rho)],\partial_{\xi_{11}}(\Wrx+\phi_\rho))_{H_0^1(\Omega)}\\
=&(\sum_{i,h=1}^2c_{ih}P\psi_i^h,\partial_{\xi_{11}}\Wrx)_{H_0^1(\Omega)}.
\end{aligned}
\end{equation}
On the other hand, by definition of $\Wrx$ we have
\begin{equation*}
\partial_{\xi_{11}}\Wrx=\partial_{\xi_{11}}Pw_1-\frac{1}{\ga}Pw_2
=P\psi_1^1+P\psi_1^0\partial_{\xi_{11}}\de_1(\xi_1,\xi_2)-\frac{1}{\ga}P\psi_2^0\partial_{\xi_{11}}\de_2(\xi_1,\xi_2).
\end{equation*}
In view of \eqref{eq:psiorthog} and observing that $\partial_{\xi_{11}}\de_i(\xi_1,\xi_2)=O(\rho)$, $i=1,2$,
we conclude that
\begin{equation*}
(\sum_{i,h=1}^2c_{ih}P\psi_i^h,\partial_{\xi_{11}}\Wrx)_{H_0^1(\Omega)}
=c_{11}\frac{\de D}{\rho^3}(1+O(\rho)).
\end{equation*}
Now it follows from \eqref{eq:tildeJexp} and the above that if $\partial_{\xi_{11}}\widetilde J_\rho(\xi_1,\xi_2)=0$
then necessarily $c_{11}=0$. Similarly, we check that $c_{12}=c_{21}=c_{22}=0$.
\end{proof}
\begin{proof}[Proof of Theorem~\ref{thm:main}]
We use standard Ljusternik-Schnirelmann theory to obtain
$\mathrm{cat}\mathcal F_2\Omega$ critical points for $\mathcal H_\ga(\xi_1,\xi_2)$.
More precisely, we note that $\mathcal H_\ga(\xi_1,\xi_2)\to-\infty$
as $(\xi_1,\xi_2)\to\partial\mathcal F_2\Omega$.
Consequently, $\mathcal H_\ga$ is bounded from above on $\mathcal F_2\Omega$
and we may apply Theorem~2.3 in \cite{Ambrosetti}
to derive the asserted existence of critical points $(\xi_1^i,\xi_2^i)\in\mathcal F_2\Omega$.
See also Theorem~2.1 in \cite{BaPi}.
Since $\widetilde J_\rho\to\mathcal H_\ga$ in $C^1(\mathcal F_2\Omega)$,
we conclude for sufficiently small values of $\rho$ the functional
$\widetilde J_\rho$ admits at least $\mathrm{cat}\mathcal F_2\Omega$ critical points
$(\xi_{\rho,1}^i,\xi_{\rho,2}^i)\to(\xi_1^i,\xi_2^i)$, $i=1,\ldots,\mathrm{cat}\mathcal F_2\Omega$.
For each fixed $i=1,\ldots,\mathrm{cat}\mathcal F_2\Omega$, we then apply Proposition~\ref{prop:phi}
with $(\xi_1,\xi_2)=(\xi_{\rho,1}^i,\xi_{\rho,2}^i)$
to obtain the desired solutions $u_{\rho}^i$.
\par
Proof of (i). By construction, $u_{\rho}^i$, $i=1,\ldots,\mathrm{cat}\mathcal F_2\Omega$
satisfies Ansatz~\eqref{ans}.
\par
Proof of (ii).
We adapt an argument from \cite{BP,BaMiPi} to our situation.
Since $\|\phi_\rho\|_{L^\infty}\to0$ as $\rho^2\to0$, there exist disjoint balls $B_r(\xi_{i,\rho})\subset \Omega\setminus\{x\in\Omega:\ u_\rho(x)=0\}$,
$i=1,2$, $\mathrm{dist}(B_r(\xi_{1,\rho}),B_r(\xi_{2,\rho}))\ge\delta>0$ such that $u_\rho\ge\delta$ in $B_r(\xi_{1,\rho})$
and $u_\rho\le-\delta$ in $B_r(\xi_{2,\rho})$.
Therefore, the set $\Omega\setminus\{x\in\Omega:\ u_\rho(x)=0\}$ has at least two connected components.
Arguing by contradiction, we assume that there exists
another connected component $\Omega_\rho\subset\Omega\setminus\{x\in\Omega:\ u_\rho(x)=0\}$
with the property $\omega_\rho\supset B_r(\xi)$ for some $B_r(\xi)\in\Omega\setminus(B_r(\xi_{1,\rho})\cup B_r(\xi_{2,\rho}))$.
Then $u_\rho$ satisfies
$$
\left\{
\begin{aligned}
-\Delta u_\rho=&a_\rho u_\rho+\rho^2(1-\tau)\qquad\mathrm{in\ }\omega_\rho\\
u_\rho\in&H_0^1(\omega_\rho) 
\end{aligned}
\right.
$$
with $a_\rho$ defined by
$$
a_\rho=\frac{f_\rho(u_\rho)-\rho^2(1-\tau)}{u_\rho}
=\rho^2\,\frac{(e^{u_\rho}-1)-\tau(e^{-\ga u_\rho}-1)}{u_\rho}.
$$ 
Multiplying by $u_\rho$ and integrating, we obtain
\begin{align*}
\int_{\omega_\rho}|\nabla u_\rho|^2\,dx\le&\|a_\rho\|_{L^\infty(\Omega_\rho)}\|u_\rho\|_{L^2(\omega_\rho)}^2
+\rho^2|\tau_1-\tau_2|\|u_\rho\|_{L^1(\omega_\rho)}\\
\le&\frac{\|a_\rho\|_{L^\infty(\omega_\rho)}}{\lambda_1(\omega_\rho)}\int_{\omega_\rho}|\nabla u_\rho|^2\,dx
+\rho^2\,\frac{|1-\tau|\,|\omega_\rho|^{1/2}}{\la_1^{1/2}(\omega_\rho)}\left(\int_{\omega_\rho}|\nabla u_\rho|^2\,dx\right)^{1/2}
\end{align*}
where for any $\omega\subset\Omega$ we denote by $\la_1(\omega)$ the first eigenvalue of the operator $-\Delta$
defined on $\omega$,
subject to Dirichlet boundary conditions.
Recalling that $\la_1(\omega_\rho)\ge\la_1(\Omega)>0$, we derive that
\begin{align}
\label{eq:omegarho}
\left(1-\frac{\|a_\rho\|_{L^\infty(\omega_\rho)}}{\la_1(\Omega)}\right)\left(\int_{\omega_\rho}|\nabla u_\rho|^2\,dx\right)^{1/2}
\le\rho^2\,\frac{|1-\tau|\,|\omega_\rho|^{1/2}}{\la_1^{1/2}(\Omega)}.
\end{align}
Since $u_\rho\to u_0$ in $C^2(\overline{\omega_\rho})$,
with $u_0=8\pi G(\cdot,\xi_1^*)-8\pi\ga^{-1}G(\cdot,\xi_2^*)$
for some $\xi_1^*,\xi_2^*\in\Omega$, $\xi_1^*\neq\xi_2^*$,
we have
$$
\int_{\omega_\rho}|\nabla u_\rho|^2\,dx\ge\int_{B_r(\xi)}|\nabla u_0|^2\,dx>0.
$$
On the other hand, we have $\|a_\rho\|_{L^\infty(\Omega_\rho)}=O(\rho^2)$.
We thus obtain from \eqref{eq:omegarho} that $(1+O(\rho^2))\le C\rho^2$,
a contradiction.
\par
Proof of (iii). The proof is a straightforward consequence of the symmetry of the problem. 
See also Theorem~2.1 in \cite{BaPi},
Part (b).
\end{proof}
%%%%%%%%%%%%%%%%%%%%%%%%%%%%%%%%%%%%%%%%%%%%%%%%%%%%%%%%%%%%%%%%%%%%%%%%%%%%%%%%%%%%%%%%%%%%
%%%%%%%%%%%%%%%%%%%%%%%%%%%%%%%%%%%%%%%%%%%%%%%%%%%%%%%%%%%%%%%%%%%%%%%%%%%%%%%%%%%%%%%%%%%%
\section{Proof of Theorem~\ref{thm:bubbleescape}}
\label{sec:bubbleescape}
%%%%%%%%%%%%%%%%%%%%%%%%%%%%%%%%%%%%%%%%%%%%%%%%%%%%%%%%%%%%%%%%%%%%%%%%%%%%%%%%%%%%%%%%%%%%
%%%%%%%%%%%%%%%%%%%%%%%%%%%%%%%%%%%%%%%%%%%%%%%%%%%%%%%%%%%%%%%%%%%%%%%%%%%%%%%%%%%%%%%%%%%%
In this section we prove Theorem~\ref{thm:bubbleescape} by carefully analyzing the asymptotic behavior
of the critical points of 
the Hamiltonian $\mathcal H_\ga$ defined in \eqref{def:F}.
For the sake of simplicity, we slightly change notation throughout this section.
We recall that
$$
{\mathcal H}_\ga(x,y):=h(x)+\frac{h(y)}{\ga^2}-\frac{2}{\ga} G(x,y),  
$$
for all $(x,y)\in\Omega\times\Omega,\ x\not= y$,
where
$$
G(x,y):={1\over 2\pi}\ln{1\over|x-y|} +H(x,y)
$$
is the Green's function and we denote by
$$
h(x):=H(x,x)
$$
the Robin's function.
\par
Our aim in this section is to establish the following result, which is the main ingredient needed
in the proof of Theorem~\ref{thm:bubbleescape}.
\begin{prop}
\label{prop:Hamiltonian}
Let $\Omega\subset\mathbb R^2$ be a convex bounded domain. Let $\ga_n\to+\infty.$
Let $(x_n,y_n)$ be a critical point of ${\mathcal H}_{\ga_n}$ such that $(x_n,y_n)\to(x_0,y_0)\in\bar\Omega\times\bar\Omega$.
Then, we have:
\begin{enumerate}
\item [(i)]
$x_0\in\Omega$; moreover, $x_0$ is the unique maximum point of the Robin's function;
\item[(ii)]
$y_0\in\partial\Omega$; moreover, $y_0$ is a minimum point of the function $\partial_\nu G(x_0,y),$ $y\in\partial\Omega$.
Here $\nu$ denotes the outward normal at $y\in\partial\Omega$.
\end{enumerate}
\end{prop}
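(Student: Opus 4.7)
The plan is to analyze the Euler--Lagrange equations for $\mathcal H_{\ga_n}$,
\begin{equation*}
\nabla h(x_n)=\frac{2}{\ga_n}\,\nabla_x G(x_n,y_n),\qquad \nabla h(y_n)=2\ga_n\,\nabla_y G(x_n,y_n),
\end{equation*}
and pass to the limit $\ga_n\to+\infty$. The first equation suggests that $x_0$ must be a critical point of the Robin function, while the diverging factor $\ga_n$ in the second, together with boundedness of $\nabla h(y_n)$ in compact subsets of $\Omega$, forces either $\nabla_y G(x_0,y_0)=0$ or $y_n\to\partial\Omega$ at a specific rate; a boundary rescaling at scale $1/\ga_n$ then identifies the limit $y_0$.

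I would first rule out the degenerate limit configurations. If $x_0=y_0\in\Omega$, the diagonal blow-up $|\nabla_y G(x_n,y_n)|\sim|x_n-y_n|^{-1}$ paired with boundedness of $\nabla h(y_n)$ would force $|x_n-y_n|\gtrsim\ga_n\to+\infty$, impossible in a bounded domain. If $x_0\in\partial\Omega$ with $x_0\neq y_0$, then $|\nabla h(x_n)|\to+\infty$ while the RHS of the first equation stays bounded, a contradiction. Hence $x_0\in\Omega\setminus\{y_0\}$, and passing to the limit in the first equation yields $\nabla h(x_0)=0$; the classical result of Caffarelli--Friedman that on a convex planar domain $h$ has a unique critical point, its non-degenerate global maximum, gives part~(i). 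Analogously, if $y_0\in\Omega\setminus\{x_0\}$, the second equation forces $\nabla_y G(x_0,y_0)=0$ in the limit, contradicting the classical fact that on a convex domain the map $y\mapsto G(x_0,y)$ has no interior critical point away from its singularity (a consequence of the convexity of its super-level sets). Therefore $y_0\in\partial\Omega$.

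The core of the argument is the boundary asymptotic analysis for $y_n$. Set $y_n=y_n^*-s_n\nu(y_n^*)$ with $y_n^*\in\partial\Omega$ the nearest boundary point and $s_n=d(y_n,\partial\Omega)\to 0$, and use the standard expansions
\begin{equation*}
G(x,y_n)=-s_n\,\partial_\nu G(x,y_n^*)+O(s_n^2),\qquad \partial_\tau G(x,y_n)=-s_n\,\partial_\tau\partial_\nu G(x,y_n^*)+O(s_n^2),
\end{equation*}
together with the classical boundary identity $h(y)=\tfrac{1}{2\pi}\log(2d(y,\partial\Omega))+\tilde\eta(y)$, where $\tilde\eta$ extends smoothly up to $\partial\Omega$ with $\tilde\eta|_{\partial\Omega}\equiv 0$ (in 2D this follows from the conformal representation $h(\phi(w))=\tfrac{1}{2\pi}\log[(1-|w|^2)|\phi'(w)|]$ applied to a Riemann map $\phi:\mathbb D\to\Omega$, using that convex planar domains are simply connected). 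The normal projection of the second equation then pins down the concentration scale $s_n\ga_n\to(4\pi|\partial_\nu G(x_0,y_0)|)^{-1}$, while the tangential projection, once the leading singular contribution is killed by $\nabla_\tau\tilde\eta|_{\partial\Omega}\equiv 0$, yields in the limit $\partial_\tau|\partial_\nu G(x_0,y_0)|=0$, so that $y_0$ is a critical point of $\partial_\nu G(x_0,\cdot)|_{\partial\Omega}$.

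The main obstacle will be to upgrade this last conclusion from \emph{critical point} to \emph{minimum point} of $\partial_\nu G(x_0,\cdot)$ on $\partial\Omega$. The first-order analysis alone does not distinguish among critical values; the minimum character must come either from a second-variation analysis of the reduced one-bubble Hamiltonian in boundary-adapted coordinates (whose signature, computed from the expansions of the previous step, is expected to be a max in $x$, a max in the normal $s$-direction, and a min in the tangential direction precisely at minima of $\partial_\nu G(x_0,\cdot)$) or from the min-max level at which the Ljusternik--Schnirelmann critical point $(x_n,y_n)$ from Theorem~\ref{thm:main} is constructed. A secondary technical point is the careful justification of the identity $\tilde\eta|_{\partial\Omega}\equiv 0$, which although classical relies on 2D conformal-map techniques and on the simple connectedness of convex planar domains.
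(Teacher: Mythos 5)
Your exclusion of degenerate limits has a gap: the case $x_0 = y_0 \in \partial\Omega$ is not covered. You rule out $x_0 = y_0 \in \Omega$ (interior diagonal blow-up of $\nabla_y G$) and $x_0 \in \partial\Omega$ with $x_0 \neq y_0$ (boundary blow-up of $\nabla h(x_n)$ against a bounded right-hand side), and from these alone conclude ``$x_0 \in \Omega\setminus\{y_0\}$.'' But when $x_n$ and $y_n$ both approach the same boundary point, both sides of the first critical-point equation are unbounded ($|\nabla h(x_n)| \sim d_{x_n}^{-1}$ while the right-hand side is of order $(\ga_n|x_n - y_n|)^{-1}$), and a careful rate comparison is required. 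The paper handles this second case of Claim~1 by using the reflection estimates of Lemma~\ref{lem:Ghexpansions} to prove first $|x_n-y_n|/d_{x_n} = o(1)$ and then $d_{y_n}/d_{x_n} = o(1)$, which contradicts the triangle inequality $d_{x_n} \le |x_n-y_n| + d_{y_n}$. Without an argument of this kind, your intermediate conclusion does not follow.

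For part~(ii), your route is genuinely different from the paper's and, as you acknowledge, incomplete. You propose a boundary-scaled first-order analysis of the second critical-point equation (normal and tangential projections at scale $s_n = d_{y_n}$), which at best yields that $y_0$ is a \emph{critical} point of $\partial_\nu G(x_0,\cdot)$ on $\partial\Omega$, and you flag the promotion to \emph{minimum} as an obstacle. The paper instead makes a zeroth-order variational comparison: it takes $(x_n,y_n)$ to be a \emph{maximum} point of $\mathcal H_{\ga_n}$ (the Ljusternik--Schnirelmann construction in Theorem~\ref{thm:main} provides such a point) and compares $\mathcal H_{\ga_n}(x_n,y_n)$ with $\mathcal H_{\ga_n}(x_n,y)$ for the test point $y = p - d_{y_n}\nu(p)$, placed at the \emph{same} distance $d_{y_n}$ from the boundary but under an arbitrary $p \in \partial\Omega$. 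With the boundary expansion of $h$ from Lemma~\ref{lem:Gh}\,(iii), the singular $\log(2d_{y_n})$ contribution cancels exactly between the two sides, and since $\ga_n\to+\infty$ the Green's-function term $-2\ga_n G(x_n,\cdot) = 2\ga_n d_{y_n}\partial_\nu G(x_n,\cdot) + o(\ga_n d_{y_n})$ dominates the $O(d_{y_n})$ curvature correction, giving $\partial_\nu G(x_0,y_0)\le \partial_\nu G(x_0,p)$ directly. This bypasses any identification of the concentration scale $s_n\ga_n$ and any tangential first-order analysis. Your second suggested remedy (exploiting the variational level of the constructed critical point) is indeed the mechanism the paper uses; the first-order analysis you develop cannot by itself distinguish a minimum from any other critical point of $\partial_\nu G(x_0,\cdot)$.
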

We collect in the following lemmas some known results which are needed in the proof of Proposition~\ref{prop:Hamiltonian}.
We first introduce some notation.
For a fixed small constant $\eps_0>0$ we define the tubular neighborhood
$$
\Omega_0:=\{x\in\Omega:\ \mathrm{dist}(x,\partial\Omega)<\eps_0\}.
$$
We assume that $\eps_0$ is sufficiently small so that the reflection map at $\partial\Omega$,
denoted by
$x\in\Omega_0\mapsto\bar x\in\rr^2\setminus\overline\Omega$, is well-defined.
Correspondingly, we define the orthogonal projection $p:\Omega_0\to\partial\Omega$ by setting
$p(x)=(x+\bar x)/2$. The outward normal at $p(x)$ is then given by
$(\bar x-x)/|\bar x-x|$.
For $x\in\Omega$ we denote $d_x=\mathrm{dist}(x,\partial\Omega)$.
\begin{lemma}
\label{lem:Gh}
The following properties hold for the Green's function $G(x,y)$ and the Robin's function $h(x)$.
\begin{enumerate}
\item[(i)]
[\cite{CaffarelliFriedman1985}, Theorem~3.1.] 
Let $\Omega\subset\rr^2$ be a convex domain, not necessarily bounded,
which is not an infinite strip, and let $h=h_\Omega$ denote the associated Robin's function.
Then, $-h$ is strictly convex,
that is, the Hessian $(-h_{ij})$ is strictly positive definite. 
\item[(ii)]
[\cite{BDP}, Lemma~A.2] 
Let $\Omega\subset\rr^N$, $N\ge2$, be a convex bounded domain.
Then for any $x,y\in\Omega$, $x\neq y$, we have
\begin{equation*}
(x-y)\cdot\nabla_xG(x,y)<0.
\end{equation*}
\item[(iii)]
[\cite{BF}, p.~204] 
Suppose $\partial\Omega$ is sufficiently smooth so that $e^h\in C^2(\Omega)$.
Then, writing $y=p(y)-d_y\nu(y)$ for $y\in\Omega_0$, the following expansion holds:
\begin{equation*}
h(y)=\frac{1}{2\pi}\left(\log(2d_y)-\frac{\kappa(p(y))}{2}d_y+o(d_y)\right),
\end{equation*}
where $\kappa$ denotes the mean curvature of the boundary with respect to the exterior normal.
\end{enumerate}
\end{lemma}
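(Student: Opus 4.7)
The plan is to extract asymptotic information from the critical point equations for $\mathcal H_\ga$ at $(x_n,y_n)$,
\begin{equation*}
\nabla h(x_n)=\frac{2}{\ga_n}\nabla_x G(x_n,y_n),\qquad\nabla h(y_n)=2\ga_n\nabla_y G(x_n,y_n),
\end{equation*}
combining them with Lemma~\ref{lem:Gh} and the elementary expansion $G(x,y)=-d_y\partial_\nu G(x,p(y))+O(d_y^2)$ as $y\to\partial\Omega$, where $d_y:=\mathrm{dist}(y,\partial\Omega)$.

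First I would prove $y_0\in\partial\Omega$ by contradiction. If $y_0\in\Omega$, then $|\nabla h(y_n)|$ is bounded, so the second equation forces $\nabla_y G(x_n,y_n)=O(1/\ga_n)\to 0$. Using the symmetry $G(x,y)=G(y,x)$, Lemma~\ref{lem:Gh}(ii) excludes the case $x_0\in\Omega$, $x_0\neq y_0$, since it guarantees $\nabla_y G(x_0,y_0)\neq 0$. The case $x_0\in\partial\Omega$ is ruled out by the first equation, since $|\nabla h(x_n)|\sim 1/d_{x_n}\to\infty$ while $|\nabla_x G(x_n,y_n)|$ remains bounded. The collision case $x_0=y_0\in\Omega$ is excluded via the local singular behavior $|\nabla_y G(x_n,y_n)|\sim 1/|x_n-y_n|$, which would force $\ga_n/|x_n-y_n|=O(1)$ and contradict $|x_n-y_n|\to 0$.

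Once $y_n\to y_0\in\partial\Omega$ is established, the boundary expansion of $G$ yields $|\nabla_x G(x_n,y_n)|=O(d_{y_n})$ and, by the symmetric expansion in $x$, $|\nabla_y G(x_n,y_n)|=O(d_{x_n})$ whenever $x_n$ approaches the boundary. If we assumed $x_0\in\partial\Omega$, the first equation would give $d_{x_n}d_{y_n}\gtrsim\ga_n$, while the second equation would force $d_{x_n}d_{y_n}\lesssim 1/\ga_n$, giving $\ga_n^2\lesssim 1$ and a contradiction. Hence $x_0\in\Omega$, and the first equation then yields $\nabla h(x_0)=0$. The strict convexity of $-h$ provided by Lemma~\ref{lem:Gh}(i) forces $x_0$ to be the unique maximum point of $h$, proving part~(i).

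For part (ii), I would decompose the second critical equation at $y_n=p(y_n)-d_{y_n}\nu(p(y_n))$ into tangential and normal parts, using Lemma~\ref{lem:Gh}(iii) to obtain $\nabla h(y)\sim-\nu(p(y))/(2\pi d_y)$ in the normal direction and $\partial_\tau h(y)=O(d_y)$ tangentially. The normal component pins down the distance scale $d_{y_n}\sim-1/(4\pi\ga_n\partial_\nu G(x_0,p(y_n)))$, while the tangential component becomes $\partial_\tau[\partial_\nu G(x_n,p(y_n))]=O(1/\ga_n)\to 0$. Passing to the limit, $y_0$ is a critical point of $y\mapsto\partial_\nu G(x_0,y)$ on $\partial\Omega$. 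The main obstacle is promoting ``critical point'' to \emph{minimum}: the plan is to substitute the optimal normal distance back into $\mathcal H_{\ga_n}$ to obtain a reduced one-dimensional functional of the form $h(x_0)-\log|\partial_\nu G(x_0,\cdot)|/(2\pi\ga_n^2)+\text{l.o.t.}$ on $\partial\Omega$, then combine a second-variation computation of this reduction with the Ljusternik--Schnirelmann min-max characterization of the critical points from Theorem~\ref{thm:main}, which yields saddle-type critical points of $\mathcal H_\ga$ that are maxima in the normal direction and minima in the tangential direction, thereby identifying $y_0$ as a local minimum of $\partial_\nu G(x_0,\cdot)$ on $\partial\Omega$.
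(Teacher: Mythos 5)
Your proposal does not address the statement at hand. The statement to be proved is Lemma~\ref{lem:Gh} itself, i.e.\ the three properties of the Green's and Robin's functions: (i) strict convexity of $-h$ on convex planar domains, (ii) the inequality $(x-y)\cdot\nabla_xG(x,y)<0$ on convex bounded domains, and (iii) the boundary expansion $h(y)=\frac{1}{2\pi}\bigl(\log(2d_y)-\tfrac{\kappa(p(y))}{2}d_y+o(d_y)\bigr)$. In the paper these are quoted results, attributed respectively to Caffarelli--Friedman, to Bartsch--D'Aprile--Pistoia, and to Bandle--Flucher, with no proof beyond the citations (plus a remark that [BDP, Lemma~A.2], stated for $N\ge3$, extends to $N=2$). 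A genuine proof attempt would have to reproduce or at least sketch those arguments: for (i) the Caffarelli--Friedman convexity analysis; for (ii) the monotonicity of $G(\cdot,y)$ along segments issuing from $y$ in a convex domain; for (iii) the harmonic-radius/conformal-map expansion near a smooth boundary.

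What you have written instead is a proof of Proposition~\ref{prop:Hamiltonian} (the localization of the critical points of $\mathcal H_{\ga}$ as $\ga\to+\infty$), in which all three parts of Lemma~\ref{lem:Gh} are \emph{used as hypotheses}: you invoke part (i) to identify $x_0$ as the unique maximum of $h$, part (ii) to exclude interior critical points of $G(x_0,\cdot)$, and part (iii) for the normal behavior of $\nabla h$ near $\partial\Omega$. None of the three assertions of the lemma is established anywhere in your argument, so as a proof of the stated lemma it is vacuous. As an aside, even read as a proof of Proposition~\ref{prop:Hamiltonian}, your final step --- upgrading ``critical point of $\partial_\nu G(x_0,\cdot)$ on $\partial\Omega$'' to ``minimum'' by appealing to an unproven second-variation/min-max saddle structure --- is not justified; the paper instead argues directly from the inequality $\mathcal H_n(x_n,y_n)\ge\mathcal H_n(x_n,y)$ for competitor points $y=p-d_{y_n}\nu(p)$ placed at the same distance from the boundary, combined with the expansions in Lemma~\ref{lem:Gh}(iii) and \eqref{mv}.
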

\begin{rmk}
Although Lemma~A.2 in \cite{BDP} is stated for $N\ge3$, it is clear that it holds for $N=2$
as well, in view of \cite{GT}.
\end{rmk}
Exploiting the explicit expression
of the Green's function for the half-plane, the following accurate expansions may be derived.
\begin{lemma}[\cite{BaPi}, Lemma~3.2]
\label{lem:Ghexpansions}
Let $(x_n,y_n)\in\Omega\times\Omega$. Then,
\begin{enumerate}
\item[(i)]
$h(x_n)=\frac{1}{2\pi}\log(2d_{x_n})+O(1),\quad d_{x_n}|\nabla h(x_n)|=O(1)$,
if $x_n\in\Omega_0$;
\item[(ii)]
$\nabla h(x_n)={1\over2\pi d_{x_n}}\nu(x_n)+o(1)$, if $d_{x_n}\to0$;
\item[(iii)]
$\nabla _xG(x_n,y_n)=-{1\over2\pi}{x_n-y_n\over |x_n-y_n|^2 }+O\({1\over d{x_n}}\)$, if $x_n\in\Omega_0$;
\item[(iv)]
$\langle\nabla_{x_n}G(x_n,y_n),\nu(x_n)\rangle
+\langle\nabla_{y_n}G(y_n,x_n),\nu(y_n)\rangle
=\frac{1}{2\pi}(d_{x_n}+d_{y_n})\Big(\frac{1}{|\bar x_n-y_n|^2}+\frac{1}{|\bar y_n-x_n|^2}\Big)+O(1)$,
if $x_n,y_n\in\Omega_0$.
\item[(v)]
$|\bar x_n-y_n|^2=|x_n-y_n|^2+4d_{x_n}d_{y_n}+\circ(|x_n-y_n|^2)$, if $x_n,y_n\to p^*\in\partial\Omega$. 
\end{enumerate}
\end{lemma}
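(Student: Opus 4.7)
The plan is to exploit the asymmetric role played by $\gamma_n$ in the two critical-point equations
\begin{equation*}
\gamma_n \nabla h(x_n) = 2 \nabla_x G(x_n, y_n),
\qquad
\nabla h(y_n) = 2\gamma_n \nabla_y G(x_n, y_n).
\end{equation*}
The first equation drives $\nabla h(x_n)$ to zero, so that $x_0$ should be an interior critical point of the Robin function, while the second one forces $\nabla h(y_n)$ to be large, so $y_n$ should approach $\partial\Omega$.

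First I would show $y_0\in\partial\Omega$. If $y_0\in\Omega$, then $\nabla h(y_n)$ stays bounded, hence $\nabla_y G(x_n,y_n)=O(1/\gamma_n)\to 0$. In the case $x_0\ne y_0$, the symmetry $\nabla_y G(x,y)=\nabla_x G(y,x)$ combined with Lemma~\ref{lem:Gh}(ii) gives $(y_n-x_n)\cdot\nabla_y G(x_n,y_n)<0$, incompatible with $\nabla_y G\to 0$; in the case $x_0=y_0\in\Omega$, the logarithmic singularity of $G$ produces $|\nabla_y G(x_n,y_n)|\sim 1/|x_n-y_n|\to\infty$, contradicting the bounded left-hand side.

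Next I would show that $x_0\in\Omega$ and identify it as the maximum of $h$. Assume $x_n\to x_0\in\partial\Omega$: Lemma~\ref{lem:Ghexpansions}(ii) gives $|\nabla h(x_n)|\sim 1/d_{x_n}$, so the first equation implies $|\nabla_x G(x_n,y_n)|\sim\gamma_n/d_{x_n}$. If $x_0\ne y_0$, Lemma~\ref{lem:Ghexpansions}(iii) bounds $|\nabla_x G(x_n,y_n)|\le C/d_{x_n}+C$, forcing $\gamma_n=O(1)$, which contradicts $\gamma_n\to\infty$. The residual sub-case $x_0=y_0\in\partial\Omega$ is excluded by projecting both equations onto the outward normals at $x_n$ and $y_n$ and using the refined expansions of Lemma~\ref{lem:Ghexpansions}(iv)--(v), which quantify the competing singularities of $\nabla h$ and $\nabla_x G$ and again yield a contradiction with $\gamma_n\to\infty$. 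Once $x_0\in\Omega$ is established, the first equation gives $\nabla h(x_0)=0$; since $\Omega$ is convex, Lemma~\ref{lem:Gh}(i) says $-h$ is strictly convex, so this critical point is the unique global maximum of $h$.

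Finally, I would locate $y_0$ on $\partial\Omega$ by decomposing the second equation into its normal and tangential components along the boundary at $p(y_n)$. The vanishing of $G(x_0,\cdot)$ on $\partial\Omega$ yields the Taylor expansion $G(x_0,y)=-d_y\,\partial_\nu G(x_0,p(y))+O(d_y^2)$; differentiating gives
\begin{equation*}
\nabla_y G(x_0,y)\cdot\nu=\partial_\nu G(x_0,p(y))+O(d_y), \qquad \nabla_y G(x_0,y)\cdot T=-d_y\,\partial_T\partial_\nu G(x_0,p(y))+O(d_y^2)
\end{equation*}
for any unit tangent $T$ to $\partial\Omega$ at $p(y)$. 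Since $\nabla h(y_n)$ is, to leading order, normal to $\partial\Omega$ with magnitude $1/(2\pi d_{y_n})$, the normal projection of the second equation determines $\gamma_n d_{y_n}\to 1/(4\pi|\partial_\nu G(x_0,y_0)|)$, and in particular $\gamma_n d_{y_n}^2\to 0$; substituting this into the tangential projection gives $\partial_T\partial_\nu G(x_0,y_0)=0$ for every tangent $T$, so $y_0$ is a critical point of the boundary function $\partial_\nu G(x_0,\cdot)$. The minimum character should then be extracted from a second-order analysis of the reduced boundary functional, which after optimizing in $d_y$ reduces essentially to $-(2\pi\gamma_n^2)^{-1}\log|\partial_\nu G(x_0,p(y))|$; I expect this final step and the exclusion of the coincidence $x_0=y_0\in\partial\Omega$ to be the most delicate parts of the argument.
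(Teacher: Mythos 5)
Your proposal does not address the statement you were asked to prove. Lemma~\ref{lem:Ghexpansions} is a collection of boundary expansions for the Robin function $h$ and the Green's function $G$ in the tubular neighborhood $\Omega_0$ (items (i)--(v)), which the paper does not prove at all but imports from \cite{BaPi}, Lemma~3.2, noting only that they are obtained by ``exploiting the explicit expression of the Green's function for the half-plane.'' What you have written instead is a sketch of Proposition~\ref{prop:Hamiltonian} (the localization of the critical points $(x_n,y_n)$ of $\mathcal H_{\gamma_n}$ as $\gamma_n\to+\infty$), i.e.\ the \emph{application} of the lemma rather than the lemma itself. Worse, your argument explicitly invokes Lemma~\ref{lem:Ghexpansions}(ii)--(v) as known tools, so read as a proof of that lemma it is circular.

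A genuine proof of the statement would proceed roughly as follows: for $x\in\Omega_0$ let $\bar x$ be the reflection across $\partial\Omega$ and compare $G(x,y)$ with the Green's function of the half-plane bounded by the tangent line at $p(x)$, namely $\frac{1}{2\pi}\log\frac{|\bar x-y|}{|x-y|}$, whose regular part is $\frac{1}{2\pi}\log|\bar x-y|$ and whose Robin function is $\frac{1}{2\pi}\log(2d_x)$; one then shows by the maximum principle and elliptic estimates that the difference between $H$ and this model regular part is bounded in $C^1$ uniformly on $\Omega_0$, which yields (i)--(iii), while (iv) and (v) are explicit computations with the reflected points $\bar x_n,\bar y_n$ (e.g.\ $|\bar x-y|^2=|x-y|^2+4d_xd_y+$ lower order when $x,y$ approach the same boundary point). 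None of this appears in your proposal, so as it stands there is nothing to compare with the paper's (absent) proof, and the task remains entirely open.
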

\begin{proof}[Proof of Proposition~\ref{prop:Hamiltonian}]
By assumption, $(x_n,y_n)$ is a critical point of $\mathcal H_{\ga_n}$, that is:
\begin{eqnarray}
\label{1}
&\ga_n\nabla h(x_n)=\nabla_xG(x_n,y_n)\\
\label{2}
&\frac 1{\ga_n}\nabla h(y_n)=\nabla_yG(x_n,y_n).
\end{eqnarray}
We first establish the following.
\par
\textit{Claim 1: $x_0\not=y_0$.}
\par
Indeed, suppose the contrary.
\par
We first consider the case $x_0=y_0\in  \Omega$. Then,
$\nabla h(y_n)=O(1)$. Consequently, \eqref{2} implies that
$\nabla_y G(x_n,y_n)=o(1),$ a contradiction.
\par
Hence, we consider the case $x_0=y_0\in \partial\Omega$. 
We claim that
\begin{equation}
\label{7}
{|x_n-y_n|\over d_{x_n}}=o(1).
\end{equation}
Indeed, if not we may assume that ${d_{x_n}\over|x_n-y_n| }=O(1)$.
Multiplying \eqref{1} by $\nu(x_n)$, using Lemma~\ref{lem:Ghexpansions}--(i)--(iii)  we deduce
\begin{equation*}
\begin{aligned}
\ga_n(\frac{1}{2\pi d_{x_n}}+o(1))=&\ga_n\langle\nabla h(x_n),\nu(x_n)\rangle
=\langle\nabla_xG(x_n,y_n),\nu(x_n)\rangle\\
=&-\frac{1}{4\pi}\frac{\langle x_n-y_n,\nu(x_n)\rangle}{|x_n-y_n|^2}+O(\frac{1}{d_{x_n}})
\end{aligned}
\end{equation*}
and therefore
\begin{equation*}
\label{6}
1=-{d_{x_n}\over\ga_n}{\langle x_n-y_n,\nu(x_n)\rangle\over 2|x_n-y_n|^2} +o(1).
\end{equation*}
In turn, we find  
$$
1=O\({d_{x_n}\over\ga_n|x_n-y_n|}\)=o(1),
$$
and a contradiction arises. Therefore, \eqref{7} is established.
\par
Similarly, we claim that 
\begin{equation}
\label{8}
  {d_{y_n}\over d_{x_n}}=o(1).
\end{equation}
Indeed, if not we may assume that $ {d_{x_n}\over d_{y_n}}=O(1)$.  Multiplying \eqref{1} 
by  $\nu(x_n)$ and \eqref{2} by  $\nu(y_n)$,  and adding the two identities 
we obtain
\begin{align*}
\ga_n\langle\nabla h(x_n),\nu(x_n)\rangle+\frac{1}{\ga_n}\langle\nabla h(y_n),\nu(y_n)\rangle
=\langle\nabla_xG(x_n,y_n),\nu(x_n)\rangle+\langle\nabla_yG(x_n,y_n),\nu(y_n)\rangle.
\end{align*}
Hence, using Lemma~\ref{lem:Ghexpansions}--(ii)--(iv)
we derive that
\begin{equation*}
\ga_n\(\frac{1}{2\pi d_{x_n}}+o(1)\)+\frac{1}{\ga_n}\(\frac{1}{2\pi d_{y_n}}+o(1)\)
=\frac{1}{2\pi}(d_{x_n}+d_{y_n})\(\frac{1}{|\bar x_n-y_n|^2}+\frac{1}{|\bar y_n-x_n|^2}\).
\end{equation*}
In turn, using Lemma~\ref{lem:Ghexpansions}--(v) 
we deduce
$$
{\ga_n\over d_{x_n}}+{1\over  \ga_nd_{y_n}}
=O\({d_{x_n}+d_{y_n}\over d_{x_n}d_{y_n}}\). 
$$
The above yields
$$
{1}+{1\over\ga^2_n}{d_{x_n}\over d_{y_n}}
={1\over\ga_n}O\({d_{x_n}\over d_{y_n}}+1\)  
$$
and a contradiction arises. Therefore, \eqref{8} is established.
\par
Finally, \eqref{7}--\eqref{8} and the triangle inequality
$$
d_{x_n}\le |x_n-y_n|+d_{y_n}
$$
yield a contradiction.
Hence, the proof of Claim~1 is complete.
\par
\textit{Claim~2: $x_0\in\Omega$ and $y_0\in\partial\Omega$.}
\par
Since $x_0\neq y_0$ in view of Claim~1, we have
\begin{equation}
\label{3}
\nabla G(x_n,y_n)=O(1).
\end{equation}
If $x_0\in \partial\Omega$ then 
 $|\nabla h(x_n)|\to+\infty$ and by \eqref{1} and \eqref{3} we get a contradiction.
If $x_0\in  \Omega$ and  $y_0\in \Omega$ then 
$\nabla H(y_n)=O(1)$ and   by \eqref{2} we deduce that 
$\nabla _y G(x_0,y_0)=0.$ This is impossible if $\Omega$ is convex, in view of 
Lemma~\ref{lem:Gh}--(ii).
Hence, Claim~2 is established.
\par
Proof of (i).
We are left to show that $x_0$ is the maximum point of the Robin's function.
Since $x_0\in\Omega$ and  $y_0\in \partial\Omega$, then by \eqref{3} and \eqref{1} we derive
$\nabla h(x_0)=0.$
Since the domain is bounded and convex, in view of Lemma~\ref{lem:Gh}--(i)
Robin's function has a unique critical point, given by the maximum point. 
Now Proposition~\ref{prop:Hamiltonian}--(i)
is completely established.
\par
Proof of (ii).
By the mean value theorem we may write for any $x\in\Omega$
\begin{equation}
\label{mv}
G(x,y)=G(x,p(y)-d_y\nu(y))=-\partial_\nu G(x,p(y))d_y+o(d_y).
\end{equation} 
Let $(x_n,y_n)$ be the maximum point of the function $\mathcal H_n.$ 
For any point $p\in\partial\Omega,$ we consider $y=p-d_{y_n}\nu(p)\in\Omega.$ Then, we have
$\mathcal H_n(x_n,y_n)\ge \mathcal H_n(x_n,y)$.  
That is,
$$
h(y_n)-2\ga_n G(x_n,y_n)\ge h(y)-2\ga _n G(x_n,y).
$$
In view of Lemma~\ref{lem:Gh}--(iii) and \eqref{mv} we derive
$$
-{1\over2\pi}{\kappa(p(y_n))\over2}d_{y_n}-2\ga_n \partial_\nu G(x_n,p(y_n))d_{y_n}+o(d_{y_n})
\ge-{1\over2\pi}{\kappa(p)\over2}d_{y_n}-2\ga_n \partial_\nu G(x_n,p)d_{y_n}+o(d_{y_n}).
$$
Recalling that $\ga_n\to+\infty$, we derive from the above that
$$
\partial_\nu G(x_n,p(y_n))\le\partial_\nu G(x_n,p)+o(1).
$$
Finally, taking limits, we obtain
$$
\partial_\nu G(x_0,y_0) \le\partial_\nu G(x_0,p)
$$ 
for any $p\in\partial\Omega$, and (ii) is completely established.
\end{proof}
Finally, we provide the proof of
Theorem~\ref{thm:bubbleescape}.
\begin{proof}[Proof of Theorem~\ref{thm:bubbleescape}]
Proof of (i). Let $\ga_n\to+\infty$.
The asserted asymptotic behavior follows readily from Proposition~\ref{prop:Hamiltonian}
with $(x_n,y_n)=(\xi_1^{\ga_n},\xi_2^{\ga_n})$.
Proof of (ii). In this case, we take $(x_n,y_n)=(\xi_2^{\ga_n},\xi_1^{\ga_n})$.
\end{proof}
%%%%%%%%%%%%%%%%%%%%%%%%%%%%%%%%%%%%%%%%%%%%%%%%%%%%%%%%%%%%%%%%%%%%%%%%%%%%%%%%%%%%
%%%%%%%%%%%%%%%%%%%%%%%%%%%%%%%%%%%%%%%%%%%%%%%%%%%%%%%%%%%%%%%%%%%%%%%%%%%%%%%%%%%
\section{Appendix}
%%%%%%%%%%%%%%%%%%%%%%%%%%%%%%%%%%%%%%%%%%%%%%%%%%%%%%%%%%%%%%%%%%%%%%%%%%%%%%%%%%%
%%%%%%%%%%%%%%%%%%%%%%%%%%%%%%%%%%%%%%%%%%%%%%%%%%%%%%%%%%%%%%%%%%%%%%%%%%%%%%%%%%%
We provide in this section a blow-up analysis for solution sequences to \eqref{p},
from which it is clear that the blow-up masses and the locations of the blow-up points, as taken
in Theorem~\ref{thm:main}, are the only possible choice.
\begin{prop}
\label{prop:blowupanalysis}
Assume that $u_{\rho_n}$ is a solution sequence for \eqref{p}
satisfying $u_{\rho_n}\to u_0$ in $C_{\mathrm{loc}}^2(\Omega\setminus\{\xi_1,\xi_2\})\cap W_0^{1,p}(\Omega)$, $1\le p<2$,
with
$$
u_0(x)=n_1G(x,\xi_1)-n_2G(x,\xi_2)
$$
for some $\xi_1,\xi_2\in\Omega$ and for some $n_1,n_2>0$.
Then,
\begin{align}
\label{eq:mass}
&\ n_1=8\pi,&&\ n_2=\frac{8\pi}{\ga}\\
\label{eq:location}
&\left.\nabla_\xi\left[H(\xi,\xi_1)-\frac{G(\xi,\xi_2)}{\ga}\right]\right\vert_{\xi=\xi_1}=0,
&&\left.\nabla_\xi\left[\frac{H(\xi,\xi_2)}{\ga}-G(\xi,\xi_1)\right]\right\vert_{\xi=\xi_2}=0
\end{align}
\end{prop}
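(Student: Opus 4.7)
The plan splits into two largely independent parts, one for each of the conclusions \eqref{eq:mass} and \eqref{eq:location}.

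\medskip
\noindent\textbf{Masses.}
My approach is a standard Liouville-type blow-up analysis at each concentration point, exploiting the fact that the ``wrong-sign'' exponential is subdominant there. Near $\xi_1$ I would pick $x_n\to\xi_1$ with $M_n:=u_{\rho_n}(x_n)=\max_{\overline{B_r(\xi_1)}}u_{\rho_n}\to+\infty$, set $\de_n:=\sqrt{8}\,\rho_n^{-1}e^{-M_n/2}\to 0$, and rescale
$$
v_n(y):=u_{\rho_n}(x_n+\de_n y)-M_n+\log 8.
$$
A direct computation gives
$$
-\Delta v_n=e^{v_n}-O\bigl(e^{-(1+\ga)M_n}\bigr)e^{-\ga v_n},
$$
so the second term vanishes in $C^2_{\mathrm{loc}}(\rr^2)$ and $v_n\to v$ solving $-\Delta v=e^v$ with finite mass. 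By the Chen--Li classification, $v$ is a standard Liouville bubble with $\int_{\rr^2}e^v=8\pi$, giving $\lim_n\rho_n^2\int_{B_r(\xi_1)}e^{u_{\rho_n}}\,dx=8\pi$. The same procedure applied to $\tilde u_n:=-\ga u_{\rho_n}$ near $\xi_2$ (which blows up positively there and satisfies $-\Delta\tilde u_n=\ga\tau\rho_n^2 e^{\tilde u_n}-\ga\rho_n^2 e^{u_{\rho_n}}$, with the second term exponentially small) produces $\lim_n\tau\rho_n^2\int_{B_r(\xi_2)}e^{-\ga u_{\rho_n}}\,dx=8\pi/\ga$. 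Combined with the hypothesized $C^2_{\mathrm{loc}}$ convergence on $\Omega\setminus\{\xi_1,\xi_2\}$---which forces all complementary integrals to be $O(\rho_n^2)$---the two measures concentrate as $\rho_n^2 e^{u_{\rho_n}}\,dx\rightharpoonup 8\pi\,\delta_{\xi_1}$ and $\tau\rho_n^2 e^{-\ga u_{\rho_n}}\,dx\rightharpoonup(8\pi/\ga)\,\delta_{\xi_2}$ in the sense of measures. Passing to the limit in $-\Delta u_{\rho_n}=\rho_n^2 e^{u_{\rho_n}}-\tau\rho_n^2 e^{-\ga u_{\rho_n}}$ and comparing with $-\Delta u_0=n_1\delta_{\xi_1}-n_2\delta_{\xi_2}$ yields \eqref{eq:mass}.

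\medskip
\noindent\textbf{Locations.}
For \eqref{eq:location} I would use a local Pohozaev-type identity. Setting $F_\rho(t):=\rho^2(e^t+(\tau/\ga)e^{-\ga t})$, so that $F_\rho'=f_\rho$, multiplying the equation by $\partial_k u_{\rho_n}$ and integrating over $B_r(\xi_1)$ (with $r$ small enough that $\xi_2\notin\overline{B_r(\xi_1)}$) gives
$$
\tfrac12\int_{\partial B_r(\xi_1)}|\nabla u_{\rho_n}|^2\nu_k\,ds-\int_{\partial B_r(\xi_1)}\partial_\nu u_{\rho_n}\,\partial_k u_{\rho_n}\,ds=\int_{\partial B_r(\xi_1)}F_\rho(u_{\rho_n})\nu_k\,ds.
$$
Since $u_{\rho_n}\to u_0$ in $C^1(\partial B_r(\xi_1))$ and $u_0$ is bounded there, the right-hand side is $O(\rho_n^2)\to 0$, leaving a purely geometric identity for $u_0$. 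Writing $u_0(x)=-(n_1/2\pi)\log|x-\xi_1|+R_1(x)$ with $R_1(x):=n_1 H(x,\xi_1)-n_2 G(x,\xi_2)$ smooth near $\xi_1$, and expanding both boundary integrals as $r\to 0$ using the standard identities $\int_{\partial B_r}\nu_j\nu_k\,ds=\pi r\,\de_{jk}$ and $\int_{\partial B_r}\nu_j\nu_k\nu_l\,ds=0$, the singular $r^{-1}$ contributions cancel and one is left with $n_1\,\partial_k R_1(\xi_1)+O(r)=0$. Letting $r\to 0$ yields $\nabla R_1(\xi_1)=0$; substituting $n_1=8\pi$, $n_2=8\pi/\ga$ and dividing by $8\pi$ produces the first equation of \eqref{eq:location}. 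The same argument on $B_r(\xi_2)$, with regular part $R_2(x):=n_1 G(x,\xi_1)-n_2 H(x,\xi_2)$, gives the second after the analogous substitution.

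\medskip
\noindent\textbf{Main obstacle.}
The principal technical subtlety in both steps is the coupling between the two bubbles, i.e.\ verifying that the ``wrong-sign'' exponential makes no contribution to the limit: in the rescaling argument this is the exponentially small factor $e^{-(1+\ga)M_n}$ (and its counterpart near $\xi_2$), and in the Pohozaev identity it is the uniform vanishing of $F_\rho(u_{\rho_n})$ on the fixed spheres $\partial B_r(\xi_i)$. These controls are routine but essential, since the entire argument at each blow-up point proceeds as if the other exponential were absent.
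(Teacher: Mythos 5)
Your route is genuinely different from the paper's. The paper follows Ye's complex-analytic technique: it forms the quantity $S=\tfrac12 u_z^2 + N_{zz}\ast F_\rho(u)$ (with $N$ the Newtonian potential), verifies $S_{\bar z}=0$, and passes to the limit $S_0=H_0+K_0$. Since $S_0$ is holomorphic, the second-order poles of $H_0$ and $K_0$ at $\xi_1,\xi_2$ must cancel, which yields $n_1=8\pi$, $n_2=8\pi/\ga$, while cancellation of the first-order poles yields the location conditions. You instead use a real translational Pohozaev identity for \eqref{eq:location} and a rescaling/Chen--Li classification argument for \eqref{eq:mass}. Your Pohozaev computation for the locations is correct and is essentially the real-variables reading of the paper's first-order residue cancellation; the boundary terms indeed combine to $n_1\,\partial_k R_1(\xi_1)+O(r)$, and the vanishing of $\int_{\partial B_r}F_\rho(u_{\rho_n})\nu_k$ follows from the $C^2_{\mathrm{loc}}$ convergence exactly as you say.

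The mass step, however, has a genuine gap as written. The rescaling near $\xi_1$ exhibits one entire bubble of mass $8\pi$, which only gives $\liminf_n\rho_n^2\int_{B_r(\xi_1)}e^{u_{\rho_n}}\ge 8\pi$; it does not give the reverse inequality. Nothing in the hypotheses excludes a bubble tower or residual concentration at $\xi_1$: the limit $u_0=n_1G(\cdot,\xi_1)-n_2G(\cdot,\xi_2)$ is perfectly compatible with $n_1$ being any positive multiple of $8\pi$, and it is precisely the purpose of the proposition to pin down $n_1$. Closing this requires invoking Li--Shafrir quantization together with a simple-blow-up theorem (Y.Y.~Li, or the Brezis--Li--Shafrir $\sup+\inf$ estimate), using the bounded oscillation on circles around $\xi_1$ that the $C^2_{\mathrm{loc}}$ hypothesis provides; none of this is stated. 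A cleaner, self-contained fix, consonant with your treatment of the locations, is to run the \emph{radial} Pohozaev identity on $B_r(\xi_1)$ with multiplier $(x-\xi_1)\cdot\nabla u_{\rho_n}$: letting $n\to\infty$ and then $r\to0$ gives $\tfrac{n_1^2}{4\pi}=2n_1$, hence $n_1=8\pi$, and near $\xi_2$ it gives $\tfrac{n_2^2}{4\pi}=\tfrac{2n_2}{\ga}$. This is exactly what the paper's second-order pole cancellation encodes, and would make the two halves of your proof methodologically uniform.
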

\begin{proof}
We adapt a technique from \cite{Ye}. For the sake of simplicity, throughout this proof, we denote $u=u_{\rho_n}$.
We recall that 
\begin{align*}
f_\rho(t)=&\rho^2(e^{t}-\tau e^{-\ga t})\\
F_\rho(t)=&\rho^2\left(e^{t}+\frac{\tau}{\ga}e^{-\ga t}\right)
\end{align*}
so that $-\Delta u=f_\rho(u)$ and $F_\rho'=f_\rho$. By assumption, we have 
$f_\rho(u)\stackrel{\ast}{\rightharpoonup}n_1\delta_{\xi_1}-n_2\delta_{\xi_2}$
weakly in the sense of measures, and therefore $\rho^2e^{u}\stackrel{\ast}{\rightharpoonup}n_1\delta_{\xi_1}$
and $\rho^2\tau e^{-\ga u}\stackrel{\ast}{\rightharpoonup}n_2\delta_{\xi_2}$.
It follows that
\begin{equation}
\label{eq:Flimit}
F_\rho(u)\stackrel{\ast}{\rightharpoonup}n_1\delta_{\xi_1}+\frac{n_2}{\ga}\delta_{\xi_2},
\end{equation}
weakly in the sense of measures.
Using the standard complex notation $z=x+iy$, $\partial_z=(\partial_x-i\partial_y)/2$, $\partial_{\bar z}=(\partial_x+i\partial_y)/2$
so that $\partial_{z\bar z}=\Delta/4$,
we define the quantities
\begin{align*}
H=&\frac{1}{2}u_z^2\\
K=&N_{zz}\ast F_\rho(u)=N_z\ast[F_\rho(u)]_z,
\end{align*}
where $N(z,\bar z)=(4\pi)^{-1}\log(z\bar z)$ is the Newtonian potential. We note that $\Delta N=\delta_0$,
$N_z=(4\pi z)^{-1}$, $N_{zz}=-(4\pi z^2)^{-1}$.
Let $S=H+K$. It is readily checked that $S_{\bar z}=0$, that is, $S$ is holomorphic in $\Omega$.
Indeed, we have $H_{\bar z}=-4u_zf_\rho(u)$ and 
$K_{\bar z}=N_{z\bar z}\ast[F_\rho(u)]_z=4f_\rho(u)u_z$.
It follows that $S$ converges to some holomorphic function $S_0$.
In order to determine $S_0$, we separately take limits for $H$ and $K$.
By assumption, we have
\begin{equation}
H\to H_0=\frac{1}{2}u_{0,z}^2=\frac{1}{2}[n_1G_z(z,\xi_1)-n_2G_z(z,\xi_2)]^2
\end{equation}
in $C_\mathrm{loc}^2(\Omega\setminus\{\xi_1,\xi_2\})$.
Recalling that 
$G(z,\xi)=(4\pi)^{-1}\log[(z-\xi)(\bar z-\bar\xi)]+H(z,\xi)$,
we derive
$G_z(z,\xi)=(4\pi(z-\xi))^{-1}+H_z(z,\xi)$.
Hence, we may write
$$
u_{0,z}=\frac{n_1}{4\pi(z-\xi_1)}-\frac{n_2}{4\pi(z-\xi_2)}+\omega_z,
$$
where the function
$$
\omega_z(z)=n_1H(z,\xi_1)-n_2H(z,\xi_2).
$$
is smooth in $\Omega$. Thus, we derive
\begin{equation}
\label{eq:H0}
\begin{aligned}
H_0=\frac{n_1^2}{32\pi^2(z-\xi_1)^2}&+\frac{n_2^2}{32\pi^2(z-\xi_2)^2}
-\frac{n_1n_2}{16\pi^2(z-\xi_1)(z-\xi_2)}\\
&+\frac{n_1}{4\pi(z-\xi_1)}\omega_z-\frac{n_2}{4\pi(z-\xi_2)}\omega_z+\frac{1}{2}\omega_z^2.
\end{aligned}
\end{equation}
On the other hand, we have $K\to K_0$, with
\begin{equation}
\label{eq:K0}
\begin{aligned}
K_0=&N_{zz}\ast\left[n_1\delta_{\xi_1}+\frac{n_2}{\ga}\delta_{\xi_2}\right]
=-\frac{1}{4\pi z^2}\ast\left[n_1\delta_{\xi_1}+\frac{n_2}{\ga}\delta_{\xi_2}\right]\\
=&-\frac{n_1}{4\pi(z-\xi_1)^2}-\frac{n_2}{4\pi\ga(z-\xi_2)^2}.
\end{aligned}
\end{equation}
Since $S_0=H_0+K_0$ is holomorphic, the singularities of $H_0$ necessarily cancel the singularities of $K_0$.
Cancellation of the second-order singularities readily yields $n_1=8\pi$. The second identity in \eqref{eq:mass},
namely $n_2=8\pi/\ga$ is derived similarly.
\par
Now, we consider the first-order singularities.
Near $\xi_1$, we obtain that
$$
-\frac{n_1n_2}{16\pi^2(\xi_1-\xi_2)}+\frac{n_1\omega_z(\xi_1)}{4\pi}=0.
$$
That is, using \eqref{eq:mass},
\begin{equation}
\label{eq:omegaz}
\omega_z(\xi_1)=\left.\left[8\pi H_z(z,\xi_1)-\frac{8\pi}{\ga}H_z(z,\xi_2)\right]\right\vert_{z=\xi_1}
=\frac{2}{\ga(\xi_1-\xi_2)}.
\end{equation}
On the other hand, we may write
\begin{align*}
\frac{1}{4\pi(\xi_1-\xi_2)}=\frac{1}{4\pi}\partial_z\log[(z-\xi_2)(\bar z-\bar\xi_2)]\vert_{z=\xi_1}
=N_z(z,\xi_2)=G_z(z,\xi_2)-H_z(z,\xi_2).
\end{align*}
Therefore, in view of \eqref{eq:omegaz} we obtain
\begin{align*}
\left.\left[H_z(z,\xi_1)-\frac{H_z(z,\xi_2)}{\ga}\right]\right\vert_{z=\xi_1}
=\frac{1}{4\pi\ga(\xi_1-\xi_2)}
=\frac{1}{\ga}G_z(z,\xi_2)\vert_{z=\xi_1}-\frac{H_z(z,\xi_2)}{\ga}.
\end{align*}
Hence, we conclude that
$$
\partial_z\left.\left[H(z,\xi_1)-\frac{G(z,\xi_2)}{\ga}\right]\right\vert_{z=\xi_1}=0.
$$
Since $H,G$ are real, the first equation in \eqref{eq:location} follows.
The second equation in \eqref{eq:location} is derived similarly.
\end{proof}
We note that \eqref{eq:location} implies that at the blow-up points $\xi_1,\xi_2$ 
the estimate in
Lemma~\ref{lem:deltachoice} may be improved as follows.
\begin{lemma}
\label{lem:phiimproved}
Let $\xi_1,\xi_2$ satisfy \eqref{eq:location} and let $\Wrx$ be defined by \eqref{ans}.
Then,
\begin{equation}
\label{eq:basicestimproved}
\left\|\rho^2e^{\al_1W_\rho^\xi}-e^{w_1}\right\|_{L^p(\Omega)}^p
+\left\|\rho^2\tau\ga e^{-\ga W_\rho^\xi}-e^{w_2}\right\|_{L^p(\Omega)}^p
\le C\rho^{2}.
\end{equation}
\end{lemma}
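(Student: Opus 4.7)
The plan is to reprise the local analysis carried out in the proof of Lemma~\ref{lem:deltachoice} and to exploit the critical point condition \eqref{eq:location} to cancel the leading linear defect in the Taylor expansion of the exponent of $\Wrx$ near each concentration point.

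First, I would revisit the expansion near $\xi_1$. Combining Lemma~\ref{pro-exp} with the definition of $\de_1$ in \eqref{eq:deltavalues} to absorb the constant $-\log(8\de_1^2)$ yields
\begin{equation*}
\log\rho^2+\Wrx(x)=w_1(x)+8\pi\bigl[H(x,\xi_1)-H(\xi_1,\xi_1)\bigr]-\frac{8\pi}{\ga}\bigl[G(x,\xi_2)-G(\xi_1,\xi_2)\bigr]+O(\rho^2).
\end{equation*}
Both $H(\cdot,\xi_1)$ and $G(\cdot,\xi_2)$ are smooth in a fixed neighborhood of $\xi_1$, so Taylor expansion of the bracketed differences produces a first-order term whose coefficient is exactly $8\pi\bigl[\nabla_x H(x,\xi_1)-\ga^{-1}\nabla_x G(x,\xi_2)\bigr]_{x=\xi_1}$. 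This is precisely the quantity that \eqref{eq:location} forces to vanish. Hence the residual improves from the $O(|x-\xi_1|)$ appearing in Lemma~\ref{lem:deltachoice} to $O(|x-\xi_1|^2)$, which is the key point.

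Next, I would derive the pointwise bound
\begin{equation*}
|\rho^2 e^{\Wrx(x)}-e^{w_1(x)}|\le C\,e^{w_1(x)}\bigl(|x-\xi_1|^2+\rho^2\bigr),
\end{equation*}
raise to the $p$-th power, use the explicit form of $w_1$, and rescale $y=(x-\xi_1)/\de_1$. This converts the integral over $B_\eps(\xi_1)$ into
\begin{equation*}
C\de_1^{2p}\int_{B_\eps(\xi_1)}\frac{(|x-\xi_1|^2+\rho^2)^p}{(\de_1^2+|x-\xi_1|^2)^{2p}}\,dx\le C\de_1^2\int_{\rr^2}\frac{dy}{(1+|y|^2)^{p}},
\end{equation*}
whose right-hand side is $O(\rho^2)$ for $p>1$ since the scaled integral is finite and $\de_1^2=O(\rho^2)$. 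The complementary region $\Omega\setminus B_\eps(\xi_1)$ contributes $O(\rho^{2p})$ exactly as in Lemma~\ref{lem:deltachoice}, and is therefore absorbed. An entirely analogous argument centered at $\xi_2$, invoking the second identity in \eqref{eq:location} to cancel the corresponding linear term, yields the matching bound for $\|\rho^2\tau\ga e^{-\ga\Wrx}-e^{w_2}\|_{L^p}^p$.

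I do not anticipate a substantial obstacle beyond the bookkeeping verification that \eqref{eq:location} is exactly the orthogonality relation needed to kill the first-order term in both expansions; the remaining $L^p$ computations are formal repetitions of the ones already performed in Lemma~\ref{lem:deltachoice}.
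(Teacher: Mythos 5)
Your proposal is correct and matches the paper's proof essentially step for step: invoke \eqref{eq:location} to upgrade the Taylor remainder near each concentration point from $O(|x-\xi_i|)$ to $O(|x-\xi_i|^2)$, obtain the pointwise bound $|\rho^2 e^{\Wrx}-e^{w_1}|\le C e^{w_1}(\de_1^2+|x-\xi_1|^2)$, and then rescale to get $O(\de_1^2)=O(\rho^2)$, with the complement of $B_\eps(\xi_1)$ contributing $O(\rho^{2p})$ exactly as in Lemma~\ref{lem:deltachoice}. The only marginal improvement you offer is the explicit remark that $p>1$ is needed for $\int(1+|y|^2)^{-p}\,dy$ to stay bounded, a point the paper leaves tacit.
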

\begin{proof}
In view of \eqref{eq:location}, the Taylor expansion employed
in the proof of Lemma~\ref{lem:deltachoice} may be improved:
\begin{equation*}
H(x,\xi_1)-\frac{1}{\ga}G(x,\xi_2)=H(\xi_1,\xi_1)-\frac{1}{\ga}G(\xi_1,\xi_2)+O(|x-\xi_1|^2).
\end{equation*}
Consequently, we estimate
\begin{align*}
\int_{B_\eps(\xi_1)}&|\rho^2e^{\al_1\Wrx}-e^{w_1}|^p\,dx\\
=&\int_{B_\eps(\xi_1)}\left|\rho^2e^{w_1-\log(8\delta_1^2)
+8\pi[H(\xi_1,\xi_1)-\frac{1}{\ga}G(\xi_1,\xi_2)]+O(\delta_1^2+|x-\xi_1|^2)}-e^{w_1}\right|^p\,dx\\
\le&C\int_{B_\eps(\xi_1)}e^{pw_1}(\delta_1^2+|x-\xi_1|^2)^p\,dx
\le C\de_1^{2p}\int_{B_\eps(\xi_1)}\frac{dx}{(\de_1^2+|x-\xi_2|^2)^{p}}\,dx\\
\le&C\de_1^{2}\int_{B_{\eps/\de_1}(0)}\frac{dy}{(1+|y|^2)^{p}}\,dy
\le C\rho^{2}.
\end{align*}
At this point, arguing as in Lemma~\ref{lem:deltachoice}, we conclude the proof.
\end{proof}
%%%%%%%%%%%%%%%%%%%%%%%%%%%%%%%%%%%%%%%%%%%%%%%%%%%%%%%%%%%%%%%%%%%%%%%%%%%%%%%%%%%
%%%%%%%%%%%%%%%%%%%%%%%%%%%%%%%%%%%%%%%%%%%%%%%%%%%%%%%%%%%%%%%%%%%%%%%%%%%%%%%%%%%
%%%%%%%%%%%%%%%%%%%%%%%%%%%%%%%%%%%%%%%%%%%%%%%%%%%%%%%%%%%%%%%%%%%%%%%%%%%%%%%%%%%
%%%%%%%%%%%%%%%%%%%%%%%%%%%%%%%%%%%%%%%%%%%%%%%%%%%%%%%%%%%%%%%%%%%%%%%%%%%%%%%%


\begin{thebibliography}{99}
\bibitem{Ambrosetti}
Ambrosetti, A.,
\textit{Critical points and nonlinear variational problems,}
M\'em.\ Soc.\ Math.\ Fr.\ S\'er.~2, \textbf{49} (1992), 1--139.
\bibitem{BF}
Bandle, C., Flucher, M.,
\textit{Harmonic Radius and Concentration of Energy; Hyperbolic Radius
and Liouville's Equations $\Delta U=e^U$ and $\Delta U=U^{\frac{n+2}{n-2}}$},
SIAM Review \textbf{38} (1996), 191--238.
\bibitem{Baraket2}
Baraket, S.,
\textit{Construction of singular limits for a strongly perturbed two-dimensional Dirichlet problem with
exponential nonlinearity,}
Bull.\ Sci.\ math.\ \textbf{123} (1999), 255--284.
\bibitem{BarPac}
Baraket, S., Pacard, F.,
\textit{Construction  of singular limits for a semiliear elliptic equation in dimension 2,}
Calc.\ Var.\ \textbf{6} n.~1 (1998), 1--38.
\bibitem{BP}  
Bartolucci, D., Pistoia, A. 
{\em  Existence and qualitative properties of concentrating solutions
for the sinh-Poisson equation.} 
IMA Journal of Applied Mathematics {\bf 72} (2007) 706--729.
\bibitem{BDP}
Bartsch, T., D'Aprile, T., Pistoia, A.,
{\em Multi-bubble nodal solutions for slightly subcritical elliptic problems in domains with symmetries,} 
Ann.\ Inst.\ H.~Poincar\'e Anal.\ Non Lin\'eaire \textbf{30} (2013), no.~6, 1027--1047.
\bibitem{BaMiPi}  
Bartsch, T., Micheletti, A.M., Pistoia, A., 
{\em  On the existence and profile of nodal solutions of elliptic equations
involving critical growth.} 
Calc.\ Var.\ \textbf{26} n.~3 (2006) 265--282.
\bibitem{BaPi}  
Bartsch, T., Pistoia, A., 
{\em  Critical Points of the $N-$vortex Hamiltonian in Bounded Planar Domains
and Steady State Solutions of the Incompressible Euler Equations.} 
SIAM J.\ Appl.\ Math.\ \textbf{75} (2015), n.~2, 726--744.
\bibitem{CaffarelliFriedman1985}  
Caffarelli, L., Friedman, A.,
{\em Convexity of solutions of semilinear elliptic equations,}
Duke.\ Math.\ J. \textbf{52} (1985), 431--457.
\bibitem{CLMP}  
Caglioti, E., Lions, P.L., Marchioro, C., Pulvirenti, M.,
{\em A special class of stationary flows for two-dimensional Euler equations:
A statistical mechanics description,}
Commun.\ Math.\ Phys.\ \textbf{174} (1995), 229--260.
\bibitem{EGP}
Esposito, P., Grossi, M., Pistoia, A.
{\em On the existence of blowing-up solutions for a mean field equation.}
Ann.\ Inst.\ H.~Poincar\'e Anal.\ Non Lin\'eaire  {\bf  22} (2005), 227--257.
\bibitem{EyinkSreenivasan}
Eyink, G., Sreenivasan, K.,
{\em Onsager and the theory of hydrodynamic turbulence,}
Rev.\ Modern Phys.\ \textbf{78} (2006), 87--135.
\bibitem{GT}
Grossi, M., Takahashi, F.,
{\em Nonexistence of multi-bubble solutions to some elliptic equations on convex domains,}
J.\ Funct.\ Anal.\ \textbf{259} (2010), 904--917.
\bibitem{Kiessling}
Kiessling, M.K.H.,
{\em Statistical mechanics of classical particles with logarithmic interactions,}
Comm.\ Pure Appl.\ Math.\ \textbf{46} (1993), 27--56.
\bibitem{Lin}
Lin, C.S.,
\textit{An expository survey on recent development of mean field equations,}
Discr.\ Cont.\ Dynamical Systems \textbf{19} n.~2 (2007), 217--247.
\bibitem{Moe}   
Moser, J., 
{\em  A sharp form of an inequality by N.~Trudinger.}
Indiana Univ.\ Math.\ J.\  {\bf 20} (1970/71), 1077--1092.
\bibitem{Neri}     
Neri, C., 
{\em Statistical Mechanics of the $N$-point vortex system with random intesities on a bounded domain.}
Ann.\ Inst.\ H.~Poincar\'e Anal.\ Non Lin\'eaire  {\bf  21} (2004), 381--399.
\bibitem{ORS}
Ohtsuka, H., Ricciardi, T., Suzuki, T.,
{\em Blow-up analysis for an elliptic equation describing stationary vortex flows with variable intensities in 2D-turbulence,}
J.\ Differential Equations \textbf{249} n.~6 (2010), 1436--1465.
\bibitem{On}
Onsager, L.,
{\em Statistical hydrodynamics,} 
Nuovo Cimento Suppl.\ n.~2 6 (9) (1949), 279--287.
\bibitem{RS}
Ricciardi, T., Suzuki, T.,
{\em Duality and best constant for a Trudinger--Moser inequality involving probability measures,}
J.\ of Eur.\ Math.\ Soc.\ (JEMS) \textbf{16} n.~7 (2014), 1327--1348.
\bibitem{RTZZ}
Ricciardi, T., Takahashi, R., Zecca G., Zhang X., 
{\em On the blow-up and existence of solutions for stationary vortex flows with variable intensities,}
in preparation.
\bibitem{RZ}
Ricciardi, T., Zecca, G.,
{\em Mass quantization and minimax solutions 
for Neri's mean field equation in 2D-turbulence},
arXiv:math.AP/1406.2925.
\bibitem{SawadaSuzuki}
Sawada, K., Suzuki, T.,
{\em Derivation of the equilibrium mean field equations of point vortex and vortex filament system,}
Theoret.\ Appl.\ Mech.\ Japan \textbf{56} (2008), 285--290.
\bibitem{Tru}     
Trudinger, N.S. 
{\em On imbeddings into Orlicz spaces and some applications.}
 J.\ Math.\ Mech.\  {\bf  17} (1967), 473--483.
\bibitem{Ye} 
Ye, D., 
\textit{Une remarque sur le comportement asymptotique des solutions de $-\Delta u=\lambda f(u)$,}
C.R.\ Acad.\ Sci.\ Paris \textbf{325} (1997), 1279--1282.
\end{thebibliography}
\end{document}